\numberwithin{equation}{section}
\newtheorem{theorem}{Theorem}[section]
\newtheorem{lemma}[theorem]{Lemma}
\newtheorem{proposition}[theorem]{Proposition}
\newtheorem{corollary}[theorem]{Corollary}
\theoremstyle{definition} 
\newtheorem{definition}[theorem]{Definition}
\newtheorem{remark}[theorem]{Remark}
\def\Integ{{\mathcal I}}
\def\newdom{{\mathcal D}}
\def\Jac{{\mathscr J}}
\def\Curve{{\mathscr C}}
\def\cl{{\rm cl}}
\def\cldom{{\rm cl}(\bDa)}
\def\mstar{{m^*}}
\def\trans{\Gamma}
\def\E{{\mathbb E}}
\def\R{{\mathbb R}}
\def\SS{{\mathbb S}}
\def\N{{\mathbb N}}
\def\PP{{\mathbb P}}
\def\calU{{\mathscr{U}}}
\newcommand{\lan}{\langle}
\newcommand{\ran}{\rangle}
\def\mfn{F}
\def\cnew{\upsilon}
\def\gn{\bar{g}^n_\theta}
\def\gnt{\mathfrak{g}^n_\theta}
\def\newpsi{f}
\def\newdom{{\mathcal D}}
\def\Hess{{\mathcal H}}
\def\den{\xi}
\def\numer{\kappa}
\def\bV{\bar{V}}
\def\hV{\widehat{V}}
\def\empn{L^n_{\theta}}
\def\Snp{\bar{S}^{(n,p)}}
\def\Sn{\bar{S}^{n}}
\def\bDpa{\bar{D}_{p,a}}
\def\bDa{\bar{D}_a}
\def\bln{\bar{l}^n}
\theoremstyle{remark}
\newcommand{\norm}[1]{\left\|#1\right\|}
\newcommand{\abs}[1]{\left\lvert#1\right\rvert}
\DeclareMathOperator{\hess}{Hess}
\newcommand{\Wp}{W^{(n,p)}}
\newcommand{\pnormal}{\gamma_p}
\newcommand{\normal}{\gamma_2}
\newcommand{\xstar}{a^*}
\newcommand{\sphere}{\mathbb{S}}
\newcommand{\dens}{f}
\newcommand{\newdens}{\widetilde{f}}
\newcommand{\smeas}{\sigma}
\def\Wpb{\mathscr{W}^{(n,p)}}
\def\Snpb{\bar{\mathscr{S}}^{(n,p)}}
\def\bDpab{\bar{\mathscr{D}}_{p,a}}
\def\bDab{\bar{\mathscr{D}}_a}
\def\Snb{\bar{\mathscr{S}}^{n}}
\begin{document}

\title{Geometric sharp large deviations for random projections of $\ell_p^n$ spheres and balls} 
	 	\author[Liao]{Yin-Ting Liao}  
	 	\author[Ramanan]{Kavita Ramanan}
                \thanks{The first author was supported by NSF Grant DMS-1954351 and a GSAA fellowship from the Taiwan Government. The second author was supported by the National Science Foundation under grant DMS-1713032 and by the Office of Naval Research under the Vannevar Bush Faculty Fellowship N000142112887.} 
	 	\address{Division of Applied Mathematics, Brown University, 182 George Street, Providence, RI 02912} 
	 	 \email{yin-ting\_liao@brown.edu, kavita\_ramanan@brown.edu}
		  \subjclass[2010]{60F10; 52A23; 46B06; 41A60} 
\keywords{sharp large deviations, random projections, rate function, prefactor, Bahadur-Rao correction, 
$\ell_p^n$ spheres, cone measure, spherical caps, central limit theorem for convex sets}

\begin{abstract}  
  Accurate estimation of tail probabilities of projections of high-dimensional probability measures is of relevance in high-dimensional statistics and asymptotic geometric analysis.  Whereas large deviation principles identify the asymptotic exponential decay rate of probabilities, sharp large deviation estimates also provide the "prefactor" in front of the exponentially decaying term.
  For fixed  $p \in (1,\infty)$,  consider independent sequences
  $(X^{(n,p)})_{n \in \N}$ and $(\Theta^n)_{n \in \N}$  of random vectors with
$\Theta^n$ distributed according to the normalized cone measure on the unit $\ell_2^n$ sphere, and 
    $X^{(n,p)}$  distributed according to the normalized cone measure on
    the unit  $\ell_p^n$ sphere. 
    For almost every realization $(\theta^n)_{n\in\N}$ of $(\Theta^n)_{n\in\N}$,
    (quenched) sharp large deviation estimates are established for suitably normalized (scalar) projections of $X^{(n,p)}$ onto $\theta^n$, that are 
    asymptotically exact (as the dimension $n$ tends to infinity).   
    Furthermore,  the case when $(X^{(n,p)})_{n \in \N}$ is replaced with $(\mathscr{X}^{(n,p)})_{n \in \N}$,  where 
  $\mathscr{X}^{(n,p)}$ is  distributed according to the uniform (or normalized volume) measure on the unit 
   $\ell_p^n$ ball, is also considered.
 In  both cases, in contrast to the (quenched) large deviation rate function, the prefactor exhibits
    a  dependence on  the projection directions $(\theta^n)_{n \in\N}$ that encodes additional geometric information
    that enables one to  distinguish between projections of balls and spheres. 
    Moreover, comparison with numerical estimates obtained by direct computation and importance sampling shows that the obtained analytical expressions for tail probabilities provide good approximations even for moderate values of $n$.
    The results on the one hand provide more accurate quantitative estimates of tail probabilities of random projections of $\ell_p^n$ spheres than logarithmic asymptotics, and on the other hand, generalize classical sharp large deviation estimates in the spirit of Bahadur and Ranga Rao to a geometric setting. The proofs combine Fourier analytic and probabilistic techniques. Along the way, several results of independent interest are obtained including  a simpler representation for the quenched  large deviation rate function that shows that it is strictly convex, a central limit theorem for random projections  under a certain family of tilted
 measures, and  multi-dimensional generalized Laplace asymptotics. 
 \end{abstract}

\setcounter{tocdepth}{1}
\maketitle

\tableofcontents 

\section{Introduction}
\label{sec-intro}

\subsection{Motivation and context}

The study of high-dimensional norms,   the convex bodies that describe their level sets, 
 and other high-dimensional geometric structures 
 are  central themes in geometric functional analysis \cite{MilSchGAFA04},
 and the
burgeoning field of asymptotic geometric analysis \cite{ArtGiaMil15}. 
 Several results in these fields have
shown that the presence of high dimensions  often  
imposes a certain regularity that has a probabilistic flavor.
A significant result of this type is the 
central limit theorem (CLT) for convex sets \cite{Kla07}   which, roughly speaking, 
says that if $X^n$ is  a high-dimensional random vector 
uniformly distributed on an isotropic convex body (namely, a compact convex set with non-empty interior whose normalized volume measure has zero mean and identity covariance matrix),   its one-dimensional scalar
projections $\langle X^n, \theta^n\rangle$ along most directions $\theta^n$ on
the unit $(n-1)$-dimensional sphere $\sphere^{n-1}$ in $\R^n$ have
Gaussian fluctuations.
In fact, this result holds for the larger class of isotropic logconcave measures
as well as more general high-dimensional measures that satisfy a certain concentration estimates called the thin shell condition (see, e.g.\ \cites{Sud78,vonWei97,Mec12b}). 
Of particular interest is the geometry of $\ell^n_p$ spaces, which has been classically studied using laws of large numbers, CLTs and concentration results \cites{Bob03,GueMil11,Schechtman90,Schmuck01}.
 These constitute  beautiful universality results that suggest that random projections 
 of the uniform measure on a convex body behave in some aspects like sums of
 independent  random variables. 
 On the other hand, they also imply the somewhat negative
 conclusion that typical fluctuations of lower-dimensional random projections  do not 
 yield much information about high-dimensional measures. 
 It is therefore natural  to ask  whether such random projections
 also satisfy  other properties  exhibited by sums of independent
 random variables, in particular those that 
 capture non-universal
 features that would yield useful information about the corresponding 
 high-dimensional measures.

 With this objective, large deviation principles (LDP) were
 established for suitably normalized  one-dimensional random projections of  $\ell_p^n$ balls  in  \cites{GanKimRam16,GanKimRam17}.  These  works
 established both quenched LDPs,  conditioned
 on the sequence of projection
 directions, as well as annealed LDPs, which average over the randomness of the projection directions.
Subsequently,  quenched LDPs 
  for multi-dimensional projections were obtained in \cite{skim-thesis}, and 
  annealed large deviation results for norms of $\ell_p^n$ balls and their multidimensional random  projections  were established 
 in \cites{Alonso18,Kabluchko17,KabProTha19b,KimLiaRam19}, with \cite{KabProTha19b} also considering
 moderate deviations (see also \cite{ProThaTur19} for a recent survey). 
 Going beyond the setting of $\ell_p^n$ balls (and measures with a similar
 representation),  annealed LDPs  were obtained for norms of 
 multidimensional projections 
 of more general sequences of high-dimensional random vectors
 $(X^n)_{n \in \N}$ that satisfy a so-called asymptotic thin shell condition
 in \cite{skim-thesis,KimLiaRam19}.  
  All these  LDPs are indeed  non-universal, in that the associated speeds (or exponential decay rates) and rate functions (that also captures the exponent) both
   encode properties of the high-dimensional measures. 
   However, although LDPs (in contrast to concentration results or large deviation upper bounds)
   identify the precise asymptotic exponential
   decay rate and allow for the identification of conditional limit laws \cite{KimRam18}, 
   they have the  drawback  that in general they only provide approximate estimates of the
   probabilities,   characterizing only the limit of the logarithms of the 
   deviation probabilities,  as the dimension $n$ goes to infinity.
 Thus, existing LDPs for random projections cannot be applied  directly to 
  provide accurate estimates of  tail probabilities or  develop efficient algorithms
 that  distinguish between two given high-dimensional  measures, 
 tasks that are  of importance in statistics, data analysis and computer science
 \cite{Diaconis84}.

 \subsection{Discussion of results}
 
 Our broad goal is to establish sharp (quenched) large deviation results of high-dimensional  measures
 that not only capture the precise asymptotic exponential decay rate
 of tail probabilities of random projections, but also their
``prefactors" (or the terms in front of the exponential),  so as to  provide  more accurate quantitative estimates  
 in  finite dimensions, much in the spirit of the local theory of Banach spaces. 
In addition, we  aim to identify additional geometric information that sharp large deviation 
estimates provide over LDPs. 
In this article, we focus on one-dimensional projections  of
 $\ell_p^n$ spheres and balls and obtain estimates of deviation probabilities that are asymptotically exact as the dimension goes to infinity.

 It is worthwhile to mention that 
 for the Euclidean norm of a random vector distributed on an isotropic convex body, 
 sharp large deviation upper bounds were obtained in several works
 (see, for example,  \cite{Kla07,FleAIHP10,Pao10,GueMil11} and references therein). 
 While these estimates have the very nice feature that they are
 universal (in that they apply for all isotropic convex bodies or, more generally, 
 logconcave measures), that very feature also makes them not tight for
 many specific sub-classes of convex bodies. 
 As a consequence, our proof techniques are different from those used in the latter works,
 and may be of independent interest.  
 In addition, we develop and analyze importance sampling algorithms
 to compute geometric quantities such as the volume fraction of small $\ell_p^n$ spherical caps in a certain direction, which would be infeasible to compute with reasonable accuracy using standard Monte Carlo estimation
 since the quantities are vanishingly small. 
 We expect that such computational approaches based on  large deviations
 may be useful more generally in the study of
  high-dimensional geometric structures. Indeed, the first version of this article has already spurred further work in this direction. For example, Kaufmann \cite{Kau21} studied annealed (i.e., averaged over the randomness of $\Theta$)  sharp large deviation estimates for $q$-norms of random vectors uniformly distributed on $\ell^n_p$ balls, and the paper \cite{KimRam21} establishes  
      quenched large deviation estimates for multi-dimensional projections
      of $\ell^n_p$ balls and their norms.

We now describe some of the challenges in obtaining such sharp estimates and comment on our proof technique. Our results can be viewed as 
 a geometric generalization of classical sharp large deviation 
 estimates in the spirit of   Bahadur and Ranga Rao \cite{BahRao93},
 which we now briefly recall.  
 Given a  sequence  of independent and identically distributed
 (i.i.d.) random variables $(X_i)_{i \in \N}$,  for each $n \in \N$, 
 let $S^n$ denote the corresponding empirical mean: 
 \begin{equation}
   \label{rep-sn}
   S^n :=  \frac{1}{n} \sum_{i=1}^n X_i^n = \frac{1}{\sqrt{n}} \left\langle X^n, \mathfrak{I}^n \right\rangle, 
 \end{equation}
where $X^n := (X_1, \ldots, X_n)$ and $\mathfrak{I}^n:=\frac{1}{\sqrt{n}}(1,1,\cdots,1)\in \sphere^{n-1}$.
  Under suitable assumptions on the (marginal) distribution of $X_1$ it was shown in 
 \cite{BahRao93} that 
 \begin{equation}\label{BRR}
\mathbb{P}\left(S^n \geq a\right)=\frac{e^{-n\mathbb{I}(a)}}{\bar{\sigma}_a\tau_a\sqrt{2\pi n}}\left(1+o(1)\right),
 \end{equation}   
where $\mathbb{I}$ is the Legendre transform of $\Lambda$, the logarithmic moment generating function of $X_1$, $\tau_a>0$ and $\bar{\sigma}_a>0$ are suitable constants specified below and $o(1)$ indicates a term $\varepsilon_n$ that satisfies $\varepsilon_n \to 0$ as $n\to\infty$.
  Key ingredients of the proof in \cite{BahRao93} include first identifying
   a ``tilted'' measure under which the  rare event
 on the left-hand side of \eqref{BRR} becomes typical, and  second, establishing  a quantitative CLT for the sequence $(S^n)_{n \in \N}$ under the tilted measure.  
Specifically, 
this tilted measure is also another product measure of the form $\otimes \widetilde{\mathbb{P}}_a,$ where $\widetilde{\mathbb{P}}_a$ is a measure absolutely continuous with respect to $\mathbb{P}$,  with
Radon-Nikodym derivative given by 
 \[
\frac{d\widetilde{\mathbb{P}}_a}{d\mathbb{P}}(x):=e^{\tau_ax -\Lambda(\tau_a)},
\] 
where $\tau_a$ is the unique positive constant such that $X_1$ has mean $a$ under the marginal $\widetilde{\mathbb{P}}_a$ of the tilted measure.  The constant $\bar{\sigma}_a^2$ in \eqref{BRR} is the variance of $X_1$ under $\widetilde{\mathbb{P}}_a$.
The second step of establishing a quantitative CLT is in this case standard  
given the product form of the tilted measure, and appeals to well known 
 Edgeworth expansions that also involve the third moment of $S_n$ under the tilted measure $\otimes^n\widetilde{\mathbb{P}}_a$.

Fix $p\in(1,\infty)$, and  let the projection direction $\Theta^n$ be distributed according to the normalized surface measure on $\sphere^{n-1}$, and let $X^{(n,p)}$ be a random vector independent of $\Theta^n$ that is uniformly distributed on  the  unit $\ell^n_p$ ball.
 In this article  we 
 obtain estimates of tail probabilities of 
 the scaled random projection 
 \begin{equation}\label{W}
\Wp :=  \frac{n^{1/p}}{n^{1/2}} \left\lan X^{(n,p)}, \Theta^n \right\ran = \frac{1}{n} \sum_{i=1}^n \left(n^{1/p}X^{(n,p)}_i\right)\left(n^{1/2}\Theta^n_i\right), 
 \end{equation} 
 conditioned on $\Theta=(\Theta^n)_{n\in\N}=\theta=(\theta^n)_{n\in\N}$, for a.e.\ realization  $\theta$ of $\Theta$. Using terminology that originates in statistical physics, due to the fact that we  condition on the realization
 $\theta$ of $\Theta$ and obtain results
 for almost every realization, we refer to these as ``quenched" deviation estimates.
 While (quenched) sharp large deviations of sums of weighted 
 i.i.d.\ random variables with i.i.d.\ weights 
 have been considered in more recent work \cite{Bovier15},   
 comparing  the expressions for $\Wp$ and $S^n$ in \eqref{W} and \eqref{rep-sn}, respectively,
 we see that $\Wp$ is a randomly  weighted sum of random variables that are not independent,
  with random weights that are also not independent.
 Thus, the analysis in this case  is significantly more challenging
 and requires several new ingredients.
 First, we  instead exploit a known probabilistic representation for the cone
 measure on $\ell^n_p$ spheres \cite{Schechtman90}  to rewrite the tail event 
 $\{\Wp \ge a)$   as the  probability that a certain two-dimensional random vector  lies  
 in a certain domain in $\R^2$ (see Section \ref{subs-reform}),
 and then establish  
 sharp large deviation estimates for the latter.
 This transformation turns out to be  useful even though 
  sharp large deviations in multiple dimensions
  are more involved, and none of the existing results
  (see, e.g., \cites{Andriani97,Barbe05,Joutard17} and references therein)
  apply  to this setting.    We  use Fourier analysis  and a change
  of measure argument   to obtain an asymptotic expansion for the quenched two-dimensional density (see Proposition \ref{prop-mainest} and Section \ref{sec-pfdens})
    and then integrate this density over the appropriate domain.
  To identify the appropriate change of measure or ``tilted'' measure, we first  show (in Lemma \ref{inf_ratefunc}) that the quenched large deviation rate function obtained in~\cite{GanKimRam17} is strictly convex and has a unique minimizer.
  Along the way, we also establish several results of possible independent result including 
    quantitative central limit theorems under the change 
    of measure (see Lemma \ref{clt_expansion}) and multi-dimensional generalized Laplace asymptotics
    (see Proposition \ref{asymptotic-exp}).
  
   In addition,  we also obtain corresponding results for $\ell^n_p$ balls, where 
     $X^{(n,p)}$ is replaced with $\mathscr{X}^{(n,p)}$, a random variable independent of $\Theta^n$ distributed according to the normalized volume measure on a scaled $\ell^n_p$ ball. Obtaining sharp large deviation estimates for random projections of $\ell^n_p$ balls  is substantially more complex than the $\ell^n_p$ sphere setting because the probability of interest is now expressed as an integral over a three-dimensional domain whose boundary is non-smooth at the minimizing point of the Laplace-type functional (see Section \ref{subs-reform}). This
    leads to additional difficulties in the  computation  of the  associated Laplace-type asymptotic integral 
    (see Lemma \ref{asymptotic-exp}).  As elaborated in  Remarks  
    \ref{rem-ballsphere} and \ref{rem-geom}, 
our analytical sharp large deviation
    estimates  do indeed capture additional geometric
    information beyond the large deviation rate function, and in fact we show that
    there is a clear  difference 
    between sharp tail probabilities in
     $\ell_p^n$ balls and spheres, even though they share the same large deviation rate function.  
Analogous sharp large deviation asymptotics can also be obtained in the case $p=\infty$ or, in fact, for more general
  product measures;  the analysis in this case is much  easier (see, e.g., \cite[Section 4.2]{LiaoThesis22}).

   In order to provide evidence of the accuracy of our sharp analytical estimates of the deviation probabilities for finite $n$, we compare them with numerical approximations. Specifically, we use the tilted measure identified in the sharp large deviations analysis to propose an importance sampling scheme that numerically approximates the deviation probabilities. We then compare the estimates obtained from importance sampling with analytical sharp large deviation estimates for a range of $n$.

   \subsection{Outline of the rest of the paper. }
   
  After a summary of common  notation and terminology in Sections \ref{subs-not} and \ref{subs-pre-notation},    precise statements of the main results
    are presented in Sections \ref{subs-analres} and \ref{subs-analres-b}.  
  An importance sampling algorithm for calculating tail probabilities and comparisons with resulting simulations and the  obtained analytical formulas are presented in Section~\ref{IS-algorithm}. 
    The main results rely on an asymptotic independence result for the weights induced
    by the projection direction, which is obtained in Section \ref{subs-prep},
    as well as a  reformulation of the rare event of interest as the event that
 a certain  random vector lies in a two-dimensional (or three-dimensional) domain, which is 
  described in Section \ref{subs-reform}.   Section \ref{subs-reform} also contains  
  an outline of the proofs of the main results, with  the complete proofs of the refined quenched tail estimates
  given in Sections  \ref{subs-mainproof} and \ref{subs-mainproof2} 
  for projections of $\ell_p^n$ spheres,   and in Section \ref{sec-pfmain-b}  for projections of  $\ell_p^n$ balls.
  Both proofs  proceed by first performing 
  asymptotic expansions for the joint densities of the multi-dimensional random vectors, as 
 formulated in Section \ref{subs-densest}.  These expansions are derived from 
   a general result on multi-dimensional generalized Laplace approximations obtained
  in Section \ref{subs-gen-laplace} (see Propositions \ref{asymptotic-exp} and \ref{lem-Laplace} therein)
and estimates obtained in 
  Sections \ref{subs-prep2} and  \ref{pf-estimate}, which  justify the applicability 
  of these  approximations in the present context.   
  Proofs of several  technical results  used in the analysis are deferred to
 Appendices \ref{apsec-infrfn}--\ref{app-GC-class}.

  \subsection{Notation and definitions}
  \label{subs-not}

We use the notation $\mathbb{N}$, $\mathbb{R}$ and $\mathbb{C}$ to denote the set of positive integers, real numbers and
complex numbers, respectively. For a complex number $z\in\mathbb{C}$, we denote $\operatorname{Re}\{z\}$ to be the real part of $z$.  For a set $A$, we denote its complement by $A^c$. Also, given a $m\times d$  matrix $M$,
  let $M^T$ denote its transpose and when  $m=d$, let 
 ${\rm det} M$ denote its determinant.

  Given an extended real-valued function $f:\R^d\to[0,\infty]$, its effective domain is defined as $\{x\in\R^d:f(x)<\infty\}$. For a twice differentiable function $f: \R^d \to \R$ (i.e., for which each partial 
  derivative $\partial_i \partial_j f$ exists for all $i, j \in \{1, \ldots, d\}$), 
let $\hess f (x)$ denote the $d\times d$ Hessian matrix  of $f$ at $x$.   For $q\in\mathbb{N}$, define the function space $\mathbb{L}_q(\mathbb{R}^d)$ to be
\[
\mathbb{L}_q(\mathbb{R}^d):=\left\{f:\mathbb{R}^d\to\mathbb{R}:\int_{\mathbb{R}^d}\abs{f}^qdx<\infty \right\}. 
\]

For $p\in (1,\infty)$ and $n\in\mathbb{N}$, let $\|\cdot\|_{n,p}$ denote
the $p$-norm in $\mathbb{R}^n$, that is, for $x=(x_1,\ldots,x_n)\in\mathbb{R}^n$, 
\[
\norm{x}_{n,p}:=\left(\abs{x_1}^p+\cdots+\abs{x_n}^p \right)^{1/p}.
\]
Let $\mathbb{S}^{n-1}_p$ and  $\mathbb{B}^n_p$ denote the unit $\ell^n_p$ sphere and  ball, respectively:
\begin{equation}\label{p-sphere}
\mathbb{S}^{n-1}_p:=\{x \in \R^n: \norm{x}_{n,p} = 1\}\quad\text{and}\quad \mathbb{B}^n_p:=\{x \in \R^n:\norm{x}_{n,p} \leq1\}.
\end{equation}
Also, define the cone measure on $\ell^{n}_p$ as follows: for any Borel measurable set $A\subset \mathbb{S}^{n-1}_p$,
\begin{equation}\label{cone-meas}
\mu_{n,p}(A) := \frac{{\rm vol}([0,1]A)}{{\rm vol}(\mathbb{B}^n_p)},
\end{equation}
where $[0,1]A:=\{xa\in\mathbb{R}^n: x\in[0,1], a\in A \}$, and ${\rm vol}$ denotes Lebesgue measure.
Note that when $p\in\{1,2,\infty\}$, the (renormalized) cone measure coincides with the (renormalized) surface measure,
and is equal to the unique rotational invariant measure on $\sphere^{n-1}$ with total mass $1$. For the special case  $p = 2$, we use just $\|\cdot\|$ to  denote $\|\cdot\|_{n,2}$, the Euclidean norm on $\mathbb{R}^n$,
  $\sphere^{n-1}$ to denote $\mathbb{S}^{n-1}_2$ and $\sigma_n$ to denote $\mu_{n,2}$. 

We end this section with the definition of a large deviations principle (LDP); we refer to~\cite{DemZeiBook}
 for general background on large deviations theory. For $d \in \N$, let  $\mathcal{P}(\R^d)$ denote
 the space of probability measures on $\R^d$, equipped with the topology of weak convergence, where recall
 that  for $\eta, \eta_n \in {\mathcal P}(\R^d)$, $n \in \N$,  $\eta_n$ is said to converge weakly to
 $\eta$ as $n \rightarrow \infty$, denoted $\eta_n \Rightarrow \eta$, if $\int_{\R^d} f(x) \eta_n(dx) \rightarrow \int_{\R^d} f(x) \eta (dx)$ as $n \rightarrow \infty$ for every bounded and continuous function $f$ on $\R^d$. 

\begin{definition}[Large deviation principle]
The sequence of probability measures $(\eta_n)_{n\in\mathbb{N}}\subset\mathcal{P}(\R^d)$ is said to satisfy a large deviation principle (in $\R^d$) with (speed $n$ and) a good rate function $\mathbb{I}:\mathbb{R}^d\to[0,\infty]$ if $\mathbb{I}$ is lower semicontinuous and for any measurable set $A$,
\[
-\inf_{x\in A^\mathrm{o}}\mathbb{I}(x)\leq\liminf_{n\to\infty}\frac{1}{n}\log \eta_n(A)\leq\limsup_{n\to\infty}\frac{1}{n}\log \eta_n(A)\leq-\inf_{x\in \cl(A)}\mathbb{I}(x),
\]
where $A^\mathrm{o}$ and $\cl(A)$ denote the interior and closure of $A$, respectively.
Moreover, we say that $\mathbb{I}$ is a good rate function if it has compact level sets.
A sequence of random variables $(V_n)_{n\in\mathbb{N}}$ with each $V_n$ defined on some probability space
  $(\Omega_n, {\mathcal F}_n, \mathbb{P}_n),$ is said to satisfy an LDP if the corresponding sequence of laws
$(\mathbb{P}_n^{-1}\circ V_n)_{n\in\mathbb{N}}$ satisfies an LDP.
\end{definition}

\section{Statement of main results}
\label{sec-res}

Fix $p \in (1,\infty)$. Consider a probability space  $(\Omega, {\mathcal F}, \PP)$ on which are defined
three  independent sequences  $X = (X^{(n,p)})_{n \in \N}$  and $\mathscr{X}=(\mathscr{X}^{(n,p)})_{n \in \N}$,
and $\Theta = (\Theta^n)_{n \in \N}$.  Each $X^{(n,p)}$ is distributed according to the cone
measure $\mu_{n,p}$ on the unit $\ell_p^n$ sphere, as defined in \eqref{cone-meas}, and each $\mathscr{X}^{(n,p)}$ is distributed according to the
normalized volume measure on the unit $\ell_p^n$ ball $\mathbb{B}^n_p$ defined in \eqref{p-sphere}.
The random element $\Theta$  takes values in the sequence space $\mathbb{S} := \otimes_{n\in\mathbb{N}} \sphere^{n-1}$, 
with $\Theta^n\in \sphere^{n-1}$ denoting  
the $n$-th element of
that sequence,  and is independent of $X$ (and $\mathscr{X}$) with
  distribution $\smeas$, where $\smeas$ is any probability measure
on $\SS$ whose image under the mapping $\theta \in \SS \mapsto \theta^n \in \sphere^{n-1}$ coincides with  $\sigma_n$, 
the unique rotation invariant measure  on  $\sphere^{n-1}$. 
The dependence between the random vectors $\Theta^n$ for different $n \in \N$ can be
arbitrary.
For $\theta\in\mathbb{S}$,  denote $\mathbb{P}_\theta$ to be the probability measure $\PP$ conditioned on $\Theta = \theta$, and let $\E$ and $\E_{\theta}$
denote expectation
with respect to $\PP$ and $\PP_{\theta}$, respectively.
For $n\in\mathbb{N}$, let $\Wp$ be the normalized scalar projection of $X^{(n,p)}$ along $\Theta^n$
  defined as
\begin{equation}\label{W_p}
\Wp := \frac{n^{1/p}}{n^{1/2}} \sum_{i=1}^n X_i^{(n,p)}\Theta_i^n,
\end{equation}
 and similarly let $\Wpb$ be the normalized scalar projection of $\mathscr{X}^{(n,p)}$ defined as
\begin{equation}\label{W_pb}
\Wpb := \frac{n^{1/p}}{n^{1/2}} \sum_{i=1}^n \mathscr{X}_i^{(n,p)}\Theta_i^n.
\end{equation}

First, in Section \ref{subs-pre-notation}, we introduce notation that is required to state the quenched sharp large deviation estimates. In Section \ref{subs-analres} we recall the
  quenched LDP for $\ell_p^n$ spheres and balls established in \cite{GanKimRam17}  and obtain an important
  simplification of the  quenched LDP rate function obtained therein, which in  particular
  shows that it is convex and has a unique minimum. 
  The latter property will be crucial for our analysis. We then present our sharp large deviation
  results for projections of $\ell_p^n$ spheres.  Corresponding results for $\ell_p^n$ balls are presented
  in Section \ref{subs-analres-b}.  Finally, in Section \ref{subs-reform} we provide a brief
  outline of both proofs, and present a more detailed comparison of our results
  with   classical Bahadur-Ranga Rao bounds.

\subsection{Preliminary notation}
\label{subs-pre-notation}

Fix $p \in (1,\infty)$.
Let $\pnormal \in\mathcal{P}(\mathbb{R})$ be the $p$-Gaussian distribution with density  
\begin{align}\label{pNormal}
\dens_p (y):=\frac{1}{2p^{1/p}\Gamma(1+\frac{1}{p})}e^{-\abs{y}^p/p}, \quad y\in\mathbb{R}, 
\end{align}
where $\Gamma$ is the Gamma function.   For $t_1,t_2\in \mathbb{R}$, define the extended functions
\begin{align} \label{logmgf_lp}
\Lambda_p(t_1,t_2) := \log\left( \int_\mathbb{R} e^{t_1y+t_2\abs{y}^p}\gamma_p(dy)\right),
\end{align}
and   
\begin{equation}
  \label{psi}
  \Psi_p(t_1,t_2) := \int_\mathbb{R} \Lambda_p(ut_1,t_2)\gamma_2(du),
\end{equation}
and observe that they both have effective domain $\mathbb{D}_p:=\R\times (-\infty,1/p)$.
Also, let $\Psi^*_p$ be the Legendre transform of $\Psi_p$: 
\begin{equation}\label{eq:Psi}
\Psi^*_p(t_1,t_2) := \sup_{s_1,s_2\in\mathbb{R}} \{t_1s_1 + t_2 s_2 - \Psi_p(s_1,s_2)\},  \quad t_1,t_2\in \mathbb{R},
\end{equation}
and let
$\mathbb{J}_p\subset\mathbb{R}^2$ be the effective domain of $\Psi_p^*$: 
\begin{equation}\label{def-Jp}
\mathbb{J}_p:=\{(x_1,x_2)\in\mathbb{R}^2:\Psi_p^*(x_1,x_2)<\infty\}.
\end{equation}

Since by \cite[Lemma 5.8]{GanKimRam17}, 
the function $\Lambda_p$ defined in~\eqref{logmgf_lp} is strictly convex on its effective domain, which we denote by $\mathbb{D}_p$,
$\Psi_p$ is also strictly convex on $\mathbb{D}_p$.  
By  \cite[Lemma 5.9]{GanKimRam17}, $\Psi_p$ is essentially smooth, lower-semicontinuous and hence closed. Therefore by  \cite[Theorem 26.5]{RocBook70}, $\nabla \Psi_p$ is one-to-one and onto from the domain of $\Psi_p$ to $\mathbb{J}_p$. Thus, for each $(x_1,x_2)\in\mathbb{J}_p$ there exists a unique $\lambda_x$ such that $\lambda_x\in\mathbb{D}_p$ and $\nabla \Psi_p(\lambda_x)=x$. This in turn implies that $\lambda_x$ uniquely achieves the supremum in~\eqref{eq:Psi}, and hence that
\begin{equation}
\label{gradPsi}\nabla \Psi_p(\lambda_x) = x,
\end{equation}
and 
\begin{equation}\label{lambda_x}
\Psi^*_p(x)=\langle x,\lambda_x\rangle-\Psi_p(\lambda_x).
\end{equation}

\begin{remark}\label{rem-smooth}
Since $\Psi_p$ is a strictly convex infinitely differentiable function on $\mathbb{D}_p$,  the inverse function theorem and \eqref{gradPsi} imply that the mapping $\mathbb{J}_p\ni x\mapsto \lambda_x\in\mathbb{D}_p$ is also infinitely differentiable.
\end{remark}

\subsection{Results on projections of $\ell_p^n$ spheres}
\label{subs-analres}
We first state  quenched LDPs for the sequences $(\Wp)_{n \in \N}$ from \eqref{W_p} and $(\Wpb)_{n \in \N}$ from \eqref{W_pb}. 
It follows from \cite[Theorem 2.5]{GanKimRam17} that
for $\smeas$-a.e.\ $\theta$, under $\mathbb{P}_\theta$, the sequence $(\Wpb)_{n \in\N}$
satisfies an LDP with (speed $n$ and) 
 a quasiconvex good rate function
\begin{equation}\label{rate_func}
\mathbb{I}_{p}(t) = \inf_{\tau_1\in\mathbb{R},\tau_2>0:\tau_1\tau_2^{-1/p}=t}\Psi^*_p(\tau_1,\tau_2),
\end{equation}
where recall that a  quasiconvex function is a function whose level sets are convex. 
Furthermore, it follows from \cite[Lemmas 3.1 and 3.4]{GanKimRam17}
  that $(\Wp)_{n \in \N}$ also satisfies an LDP with the same speed and rate function.
Note that the rate function $\mathbb{I}_{p}$ is insensitive to the projection directions,
in the sense that it is the same for $\sigma$-a.e.\ $\theta$.

We show in the  following lemma  that  the infimum in \eqref{rate_func} is attained uniquely at $(t,1)$,
yielding a simpler form for the rate function and use that to deduce
it is strictly convex and has a unique minimizer.
The latter  is a crucial property both for  obtaining
sharp large deviation estimates and developing importance sampling algorithms.  

\begin{lemma}\label{inf_ratefunc}
   For $p \in (1,\infty)$ and $a>0$ such that $\Psi^*_p(a,1)<\infty$, 
\[
 \inf_{\tau_1\in\mathbb{R},\tau_2>0:\tau_1\tau_2^{-1/p}=a}\Psi^*_p(\tau_1,\tau_2)  =\Psi^*_p(a,1) = \sup_{s_1,s_2\in\mathbb{R}}\left\{ as_1+s_2-\Psi_p(s_1,s_2)\right\}.
\]
\end{lemma}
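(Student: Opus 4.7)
The plan is to exploit the scaling invariance of the $p$-Gaussian density \eqref{pNormal} to reduce the two-dimensional optimization over the curve $\{\tau_1=a\tau_2^{1/p}\}$ to a one-dimensional minimization. The starting observation is that the substitution $y\mapsto(1-pt_2)^{-1/p}y$ in \eqref{logmgf_lp} gives, for $t_2<1/p$,
\begin{equation*}
\Lambda_p(t_1,t_2) \;=\; -\tfrac{1}{p}\log(1-pt_2) \,+\, \Lambda_p\!\left(\tfrac{t_1}{(1-pt_2)^{1/p}},\,0\right),
\end{equation*}
while $\Lambda_p(t_1,t_2)=+\infty$ for $t_2\geq 1/p$. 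Integrating against $\gamma_2$ and setting $h(s):=\int_\R\Lambda_p(us,0)\,\gamma_2(du)$ yields the compact representation
\begin{equation*}
\Psi_p(s_1,s_2) \;=\; -\tfrac{1}{p}\log(1-ps_2)\,+\,h\!\left(\tfrac{s_1}{(1-ps_2)^{1/p}}\right) \quad\text{on } \{s_2<1/p\},
\end{equation*}
and $\Psi_p=+\infty$ elsewhere.

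Next, in \eqref{eq:Psi} I would restrict the supremum to $s_2<1/p$ and substitute $v:=1-ps_2>0$, $u:=s_1 v^{-1/p}$. This reparametrization turns the Legendre transform into
\begin{equation*}
\Psi^*_p(t_1,t_2) \;=\; \sup_{u\in\R,\,v>0}\!\left\{\,t_1 u v^{1/p} \,+\, \tfrac{t_2(1-v)}{p} \,+\, \tfrac{1}{p}\log v \,-\, h(u)\,\right\}.
\end{equation*}
Now specialize to $(t_1,t_2)=(a\tau_2^{1/p},\tau_2)$ with $\tau_2>0$, and make the further change of variable $w:=\tau_2^{1/p}v^{1/p}$ (so $v=w^p/\tau_2$, a bijection from $v>0$ to $w>0$). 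The three terms that contain $v$ become $auw$, $-w^p/p$ and $\log w-(\log\tau_2)/p$ respectively, so the resulting supremum over $(u,w)\in\R\times(0,\infty)$ no longer involves $\tau_2$. Consequently
\begin{equation*}
\Psi^*_p(a\tau_2^{1/p},\tau_2) \;=\; \frac{\tau_2 - \log\tau_2}{p} \,+\, K(a), \qquad K(a) := \sup_{u\in\R,\,w>0}\!\left\{auw - \tfrac{w^p}{p} + \log w - h(u)\right\}.
\end{equation*}

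The map $\tau_2\mapsto \tau_2-\log\tau_2$ is strictly convex on $(0,\infty)$ with a unique minimum at $\tau_2=1$ of value $1$; hence the infimum of $\Psi^*_p(a\tau_2^{1/p},\tau_2)$ over $\tau_2>0$ is attained uniquely at $\tau_2=1$ and equals $1/p+K(a) = \Psi^*_p(a,1)$, which is the asserted identity. The finiteness hypothesis $\Psi^*_p(a,1)<\infty$ ensures $K(a)<\infty$, legitimizing all manipulations and yielding the uniqueness of the minimizer that will be needed to define the tilted measure later in the paper. The main work is really just setting up and verifying the two successive changes of variables; once the scaling identity for $\Lambda_p$ is in place, the rest is a short one-dimensional computation.
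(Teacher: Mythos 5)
Your proof is correct and takes a genuinely different route from the paper's. Both start from the same scaling identity
$\Lambda_p(t_1,t_2) = -\tfrac{1}{p}\log(1-pt_2) + \Lambda_p\bigl(t_1(1-pt_2)^{-1/p},0\bigr)$
(this is \cite[Lemma~5.7]{GanKimRam17}, restated as \eqref{lambda-est} in the paper), but from there the arguments diverge. The paper works with the first-order optimality conditions for the maximizing pair $(s_1,s_2)$ in the Legendre transform, invokes essential smoothness of $\Psi_p$ to justify their existence and to differentiate $\tau\mapsto\Psi^*_p(\tau t,\tau^p)$ through the envelope, and after some algebra involving the explicit derivatives of $\Psi_p$ finds that this derivative collapses to $\tau^{p-1}-1/\tau$, which vanishes only at $\tau=1$. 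You instead push the scaling identity into the Legendre transform itself via the substitution $v=1-ps_2$, $u=s_1v^{-1/p}$ (and then $w=(\tau_2 v)^{1/p}$), which makes the $\tau_2$-dependence factor out exactly as $\frac{\tau_2-\log\tau_2}{p}$ plus a $\tau_2$-independent supremum $K(a)$. This is calculus-free: you never differentiate a parametrized supremum, and the uniqueness of the minimizer at $\tau_2=1$ is immediate from the strict convexity of $\tau_2-\log\tau_2$ rather than read off from a first-order condition. What the paper's approach buys is that it directly exhibits the stationarity relations between $\lambda_{a,1}$, $\lambda_{a,2}$ and $a$ (which are used elsewhere in the analysis), whereas your approach buys a shorter, more structural argument with fewer regularity hypotheses to verify. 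One small point worth making explicit in a write-up: you should note that restricting the supremum in \eqref{eq:Psi} to $s_2<1/p$ is legitimate because $\Psi_p=+\infty$ off $\mathbb{D}_p$, and that $K(a)\in[-1/p,\infty)$ follows from $\Psi_p^*\ge 0$ together with the finiteness hypothesis evaluated at $\tau_2=1$; both are needed before concluding that the infimum of $\frac{\tau_2-\log\tau_2}{p}+K(a)$ is attained at $\tau_2=1$.
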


The proof of Lemma \ref{inf_ratefunc} is relegated  to Appendix~\ref{apsec-infrfn}; when combined
with \cite[Theorem 2.5,  Lemma 3.1 and Lemma 3.4]{GanKimRam17}, it yields the following simpler
form of the quenched LDP.   

\begin{theorem}
   \label{LDP_p}
  Fix $p \in (1,\infty)$. For $\smeas$-a.e.\ $\theta$, under $\mathbb{P}_\theta$, the sequences $(\Wp)_{n \in \N}$ and $(\Wpb)_{n \in \N}$  both satisfy LDPs with the same strictly
  convex, symmetric, good rate function $\mathbb{I}_{p}$ given by 
\begin{equation}
  \label{rate-fn}
\mathbb{I}_{p}(a) :=\Psi^*_p(a,1) = \sup_{s_1,s_2\in\mathbb{R}}\left\{ as_1+s_2-\Psi_p(s_1,s_2)\right\}.
\end{equation}
\end{theorem}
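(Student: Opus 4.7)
The plan is to split the theorem into three pieces: the LDP itself, the simplification of the rate function, and the qualitative properties. For the LDP I would quote \cite{GanKimRam17}: Theorem 2.5 combined with Lemmas 3.1 and 3.4 there already gives that $(\Wp)_{n\in\N}$ satisfies a quenched LDP at speed $n$ under $\PP_\theta$ for $\sigma$-a.e.\ $\theta$ with the quasiconvex good rate function in the form \eqref{rate_func}. Invoking Lemma~\ref{inf_ratefunc} collapses that formula to $\Psi^*_p(a,1)$, which coincides with the supremum on the right-hand side of \eqref{rate-fn} by the definition \eqref{eq:Psi} of $\Psi^*_p$. Hence the real content is the proof of Lemma~\ref{inf_ratefunc} together with the verification of strict convexity and symmetry.

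For Lemma~\ref{inf_ratefunc} my strategy is to exploit the scaling symmetry of the $p$-Gaussian density. A one-variable rescaling $y\mapsto(1-pt_2)^{-1/p}y$ in the integral defining $\Lambda_p$, valid for $t_2<1/p$, yields
\[
\Lambda_p(t_1,t_2)=-\tfrac{1}{p}\log(1-pt_2)+\phi\!\left(\tfrac{t_1}{(1-pt_2)^{1/p}}\right),\qquad \phi(\cdot):=\Lambda_p(\cdot,0),
\]
and averaging against $\gamma_2(du)$ in the first argument gives the analogous decomposition of $\Psi_p$. Introducing $\lambda:=(1-ps_2)^{1/p}$ and $\tilde s_1:=s_1/\lambda$ inside the supremum defining $\Psi^*_p(\tau_1,\tau_2)$ produces
\[
\Psi^*_p(\tau_1,\tau_2)=\sup_{\tilde s_1\in\R,\,\lambda>0}\Bigl\{\tau_1\lambda\tilde s_1+\tfrac{\tau_2}{p}(1-\lambda^p)+\log\lambda-\Psi_p(\tilde s_1,0)\Bigr\}.
\]
On the constraint curve $\tau_1=a\tau_2^{1/p}$ I then set $c:=\tau_2^{1/p}$ and $\mu:=c\lambda$; the variable $c$ decouples completely and its contribution reduces to $H(c):=c^p/p-\log c$, which is strictly convex with a unique minimum at $c=1$, i.e.\ $\tau_2=1$. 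Identifying the right change of variables is the main obstacle; once it is in place, the optimization in $c$ is a one-line calculus exercise.

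Strict convexity and symmetry of $\mathbb{I}_p=\Psi^*_p(\cdot,1)$ follow directly from the representation $\mathbb{I}_p(a)=\sup_{s_1,s_2}\{as_1+s_2-\Psi_p(s_1,s_2)\}$. Convexity is automatic as a supremum of affine functions of $a$. For strict convexity I would check that $\Psi_p$ is strictly convex and essentially smooth on the interior $\R\times(-\infty,1/p)$ of its effective domain: strict convexity of the one-variable log-MGF $\Lambda_p$ (whose Hessian equals the covariance of $(Y,|Y|^p)$ under the appropriate exponential tilt and is positive definite for $p>1$), combined with $\gamma_2(\{u\neq 0\})=1$, propagates to $\Psi_p$; essential smoothness holds because $\partial_{s_2}\Psi_p\to\infty$ as $s_2\uparrow 1/p$ and the $s_1$-domain is unbounded. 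Together these force the optimizing $s_1^*(a)$ to depend strictly monotonically on $a$, so $\mathbb{I}_p$ is strictly convex. Finally, $\mathbb{I}_p(-a)=\mathbb{I}_p(a)$ is immediate: both $\gamma_p$ and $\gamma_2$ are symmetric, so $\Psi_p(\cdot,s_2)$ is even, and substituting $s_1\mapsto -s_1$ in the sup converts $a$ to $-a$ without changing the value.
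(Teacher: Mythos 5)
Your proposal is correct, and for the key ingredient --- Lemma~\ref{inf_ratefunc} --- it takes a genuinely different route from the paper's. The paper works directly with the first--order conditions of the Legendre transform: it parametrizes the constraint set as $\tau\mapsto(\tau t,\tau^p)$, plugs in the optimality system~\eqref{s1}--\eqref{tau-t}, and after an envelope--theorem--style cancellation of all the $\partial s_i/\partial\tau$ terms arrives at $\tfrac{d}{d\tau}\Psi^*_p(\tau t,\tau^p)=\tau^{p-1}-\tau^{-1}$, which vanishes uniquely at $\tau=1$. You instead exploit the scaling identity $\Lambda_p(t_1,t_2)=-\tfrac1p\log(1-pt_2)+\log M_{\gamma_p}\bigl(t_1(1-pt_2)^{-1/p}\bigr)$ (which is exactly~\eqref{lambda-est}), push it through the $\gamma_2$--average defining $\Psi_p$, and reparametrize the dual variables by $\lambda=(1-ps_2)^{1/p}$, $\tilde s_1=s_1/\lambda$ --- a bijection from $\R\times(-\infty,1/p)$ to $\R\times(0,\infty)$ --- so that $\Psi_p(s_1,s_2)=-\log\lambda+\Psi_p(\tilde s_1,0)$. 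Restricted to the constraint curve and after the further substitution $\mu=c\lambda$ with $c=\tau_2^{1/p}$, the $c$--dependence separates additively as $H(c)=c^p/p-\log c$ outside the supremum, and the unique minimizer $c=1$ (equivalently $\tau_2=1$) follows from elementary one--variable convexity; no interchange of $\inf$ and $\sup$ is needed. Note that $H'(c)=c^{p-1}-c^{-1}$ is precisely the derivative the paper computes, so the two routes match at the critical point, but yours yields the antiderivative in closed form and avoids the implicit--differentiation bookkeeping entirely. The remaining pieces of your plan match the paper: the LDP is quoted from the same three results of Gantert--Kim--Ramanan, and strict convexity and symmetry of $\mathbb{I}_p=\Psi^*_p(\cdot,1)$ are deduced from essential smoothness and strict convexity of $\Psi_p$ (the paper cites Lemmas~5.8 and~5.9 therein) together with the evenness of $\gamma_p$ and $\gamma_2$. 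One small tightening worth recording: strict convexity of $\Psi^*_p$ on $\mathbb{J}_p$ follows directly from essential smoothness of $\Psi_p$ by Legendre duality, and a restriction of a strictly convex function to a line is strictly convex, so you need not argue through the strict monotonicity of $s_1^*(a)$, though that argument is also valid via $\tfrac{d\lambda_{a,1}}{da}=(\hess\Psi_p(\lambda_a)^{-1})_{11}>0$.
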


We now introduce notation to state the sharp large deviation estimate for $\Wp$. Recall the definitions of $\Psi_p$, $\Psi^*_p$, $\mathbb{J}_p$ and $\lambda_x$ from Section \ref{subs-pre-notation} and for $x\in\mathbb{J}_p$, define
$\Hess_x = \Hess_{p,x}$ by 
 \begin{equation}\label{gamma_x}
  \Hess_{p,x} := (\hess \Psi_p)(\lambda_x),
 \end{equation}
 where we suppress the dependence on $p$ from  $\lambda_x$ and $\Hess_x$. 
 Also, fix $a>0$ such that $\mathbb{I}_p(a)<\infty$.
 With some abuse of notation, we write $\lambda_a =  \lambda_{a^*}$ and
$\Hess_a = \Hess_{a^*}$, 
 where $a^* = (a,1)$. 
 Note that then 
 $\lambda_a=(\lambda_{a,1},\lambda_{a,2})\in\mathbb{R}^2$
 is the unique maximizer in~\eqref{rate-fn}, that is,
\begin{equation}\label{lambdaa}
\Psi_p^*(a,1) = a\lambda_{a,1}+\lambda_{a,2}-\Psi_p(\lambda_{a,1},\lambda_{a,2}),
\end{equation}
and  
  \begin{equation}\label{Gammaa}
\Hess_{a} :=\left(\hess \Psi_p \right)(\lambda_a). 
\end{equation}
Next,  define  the positive constants $\den_{a} = \den_{p,a}$ and $\numer_a = \numer_{p,a}$
      via the relations  
      \begin{align}
        \label{def-sigmapa}
        \den^2_{a} &:= \langle \Hess_{a} \lambda_a, \lambda_a \rangle\det\Hess_a,  \\
        \label{def-kappapa}
        \numer_{a}^2 &:=  1 - \frac{(\lambda_{a,1}^2+\lambda_{a,2}^2)^{3/2}p(p-1)a}{\abs{\lambda_{a,2}^2(\Hess_{a}^{-1})_{11}-2\lambda_{a,1}\lambda_{a,2}(\Hess_{a}^{-1})_{12}+\lambda_{a,1}^2(\Hess_{a}^{-1})_{22}}(a^2+p^2)^{3/2}}.
      \end{align}  
      \begin{remark}
      Although it is not \emph{a priori} obvious that the right-hand side of~\eqref{def-kappapa} is positive, this will become apparent from the proof of Theorem \ref{main_lp}. 
      \end{remark}
      Finally,  also define the following functions: for $x \in \R$, 
 \begin{equation}
  \label{f_pa} 
\begin{array}{rcl}
\ell_a(x) & := & \Lambda_p(x\lambda_{a,1},\lambda_{a,2}), \\
\ell_{a,1}(x) & := & x \partial_{1}\Lambda_p(x\lambda_{a,1},\lambda_{a,2}), \\
\ell_{a,2}(x) & := & \partial_{2}\Lambda_p(x\lambda_{a,1},\lambda_{a,2}).
\end{array}
\end{equation}
Note that the dependence on $p$ of these functions is again not explicitly notated.

 We are now ready to state the quenched sharp large deviation estimate for scaled projections of  $\ell_p^n$ spheres. 
 Recall for $\theta\in\mathbb{S}$,  we denote $\mathbb{P}_\theta$ to be the probability measure $\PP$ conditioned on $\Theta = \theta$.
\begin{theorem}\label{main_lp}
Fix $p \in (1,\infty)$ and $a > 0$ such that $\mathbb{I}_p(a)<\infty$. Then
  the following statements hold with the matrix $\Hess_a$ as defined in \eqref{Gammaa} and constants  $\den_{a} = \den_{p,a}$ and $\numer_{a} = \numer_{p,a}$ defined as in
\eqref{def-sigmapa} and \eqref{def-kappapa}, respectively:
   \begin{enumerate}[label=(\roman*)]
   \item \label{main_lp_1}
For $n \in \N$, there exist mappings $R_a^n = R_{p,a}^n: \sphere^{n-1} \to \R$ and
       $c_a^n = c_{p,a}^n: \sphere^{n-1} \to \R^2$, defined explicitly in \eqref{def-cnhessn} and \eqref{Rn} as a centered integrated log moment generating function and its gradient, such that
  for $\smeas$-a.e.\ $\theta$,  
\begin{align}\label{tail_prob}
  \mathbb{P}_\theta\left(\Wp>a\right) =
\frac{C^n_a(\theta^n)}{ \numer_a \den_{a}\sqrt{2\pi n} }
e^{-n\mathbb{I}_{p}(a)  + \sqrt{n} R^n_{a}(\theta^n)}(1+o(1)),
\end{align} 
where 
  \begin{align}\label{Cna}
   C^n_a(\theta^n)   :=
 \exp\left( \norm{\Hess_{a}^{-1/2}c^n_{a}(\theta^n)}^2 \right).
 \end{align} 
\item \label{main_lp_2}
  Moreover, there exist sequences of random variables
 $(r_n=r^n_{p,a})_{n \in \N}$, $(s_n=s^n_{p,a})_{n \in \N}$, and
  $(t_{n,i}=t^n_{p,a,i})_{n \in \N}$, $i = 1, 2$, (defined on some common probability space) 
  such that for each $n \in \N$, 
 \begin{align}\label{equiv-Rc}
\left(R^n_{a}(\Theta^n),c^n_{a}(\Theta^n)\right)&\buildrel (d) \over = \left(r_n + \frac{1}{\sqrt{n}}s_n+o\left(\frac{1}{\sqrt{n}}\right),(t_{n,1}+o(1),t_{n,2}+o(1))\right),
  \end{align} 
and as $n \rightarrow \infty$, 
\begin{eqnarray*} 
  && \left(r_n,s_n,t_{n,1},t_{n,2}\right)\Rightarrow (\mathfrak{R},\mathfrak{S},\mathfrak{T}_1,\mathfrak{T}_2),
  \end{eqnarray*}
  where
  \begin{eqnarray*}
  && (\mathfrak{R},\mathfrak{S},\mathfrak{T}_1,\mathfrak{T}_2):=\\
  && \qquad \left(\widetilde{\mathfrak{A}}-\frac{1}{2} \mathbb{E}[\ell'_a(Z)Z]\widetilde{\mathfrak{D}},
  \frac{1}{8}\mathbb{E}[\ell''_a(Z)Z^2]\widetilde{\mathfrak{D}}^2,
  \widetilde{\mathfrak{E}}- \frac{1}{2}\mathbb{E}\left[\ell_{a,1}'(Z)Z \right]\widetilde{\mathfrak{D}},\widetilde{\mathfrak{G}}-\frac{1}{2}\mathbb{E}\left[\ell_{a,2}'(Z)Z \right]\widetilde{\mathfrak{D}}\right),
\end{eqnarray*}
 $Z$ is a standard Gaussian random variable, and $(\widetilde{\mathfrak{A}},\widetilde{\mathfrak{D}},\widetilde{\mathfrak{E}},\widetilde{\mathfrak{G}})$ are jointly Gaussian with mean $0$ and 
covariance matrix $\Sigma_{a} = \Sigma_{p,a}$ that takes the following explicit form: 
\begin{align}\label{cov_p}
\left(
\begin{array}{llll}
\mathrm{Cov}(\ell_a(Z),\ell_a(Z)) & \mathrm{Cov}(\ell_a(Z),Z^2) & \mathrm{Cov}(\ell_a(Z),\ell_{a,1}(Z)) & \mathrm{Cov}(\ell_a(Z),\ell_{a,2}(Z))\\
\mathrm{Cov}(Z^2,\ell_a(Z)) & \mathrm{Cov}(Z^2,Z^2) & \mathrm{Cov}(Z^2,\ell_{a,1}(Z)) & \mathrm{Cov}(Z^2,\ell_{a,2}(Z))\\
\mathrm{Cov}(\ell_{a,1}(Z),f_a(Z)) & \mathrm{Cov}(\ell_{a,1}(Z),Z^2) & \mathrm{Cov}(\ell_{a,1}(Z),\ell_{a,1}(Z)) & \mathrm{Cov}(\ell_{a,1}(Z),\ell_{a,2}(Z))\\
\mathrm{Cov}(\ell_{a,2}(Z),\ell_a(Z)) & \mathrm{Cov}(\ell_{a,2}(Z),Z^2) & \mathrm{Cov}(\ell_{a,2}(Z),\ell_{a,1}(Z)) & \mathrm{Cov}(\ell_{a,2}(Z),\ell_{a,2}(Z))\\
\end{array}
\right).
\end{align}
\end{enumerate} 
\end{theorem}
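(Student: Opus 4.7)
My overall plan is to reduce the tail estimate to a quenched local central limit theorem for a two-dimensional random vector under an exponentially tilted measure, and then integrate the resulting density. I begin with the Schechtman--Zinn probabilistic representation, which writes $X^{(n,p)} = Y/\|Y\|_{n,p}$ for $Y = (Y_1,\ldots,Y_n)$ i.i.d.\ with density $\gamma_p$ and independent of $\Theta$. Setting $S_1^n := \tfrac{1}{\sqrt n}\sum_i Y_i\theta_i^n$ and $S_2^n := \tfrac{1}{n}\sum_i|Y_i|^p$, a short computation gives $\Wp = S_1^n/(S_2^n)^{1/p}$, so the two-dimensional reformulation is $\{\Wp > a\} = \{S_1^n > a(S_2^n)^{1/p}\}$, and under $\PP_\theta$ the coordinates $Y_i$ remain i.i.d. I then tilt by the unique maximizer $\lambda_a$ from~\eqref{lambdaa}, defining $\widetilde{\PP}_\theta$ with Radon--Nikodym derivative
\[
\frac{d\widetilde{\PP}_\theta}{d\PP_\theta} = \exp\!\Bigl(\sqrt n\,\lambda_{a,1}\!\sum_i Y_i\theta_i^n + \lambda_{a,2}\!\sum_i|Y_i|^p - \sum_i\Lambda_p(\sqrt n\,\lambda_{a,1}\theta_i^n,\lambda_{a,2})\Bigr).
\]
Under $\widetilde{\PP}_\theta$, $(S_1^n,S_2^n)$ has mean close to $(a,1)$ by~\eqref{gradPsi} and covariance close to $n^{-1}\Hess_a$ by~\eqref{Gammaa}, so the event becomes typical, and a standard change-of-measure identity yields
\[
\PP_\theta(\Wp>a) = e^{\sum_i\Lambda_p(\sqrt n\,\lambda_{a,1}\theta_i^n,\lambda_{a,2}) - n\lambda_a\cdot(a,1)} \widetilde{\E}_\theta\!\Bigl[e^{-\sqrt n\,\lambda_{a,1}U^n - \sqrt n\,\lambda_{a,2}V^n}\mathbf 1_E\Bigr],
\]
where $(U^n,V^n) := \sqrt n\,(S_1^n-a,S_2^n-1)$ and $E := \{S_1^n>a(S_2^n)^{1/p}\}$.

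The heart of the argument is a quenched local CLT for $(U^n,V^n)$ (Proposition~\ref{prop-mainest}), which I would establish by Fourier inversion: because the tilted characteristic function factorizes over $i$, a saddle-point expansion near the origin of Fourier space yields a Gaussian density with explicit polynomial corrections, uniformly over typical $\theta^n$, while the high-frequency tail is controlled using smoothness of $\Lambda_p$ together with the asymptotic independence of the weights $\sqrt n\,\theta^n$ from Section~\ref{subs-prep}. Integrating $e^{-\sqrt n\,\lambda_{a,1}U^n - \sqrt n\,\lambda_{a,2}V^n}\mathbf 1_E$ against this density then produces, via Laplace integration along the direction normal to the tangent of the boundary $s_1 = a s_2^{1/p}$ at $(a,1)$, the factor $(\den_a\sqrt{2\pi n})^{-1}$, whereas the second-order curvature correction of the boundary accounts for the factor $\numer_a^{-1}$ whose explicit form matches~\eqref{def-kappapa}. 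Combined with the expansion of the exponential prefactor in powers of $\sqrt n\,\theta_i^n - Z_i$ with $Z_i\sim\gamma_2$,
\[
\sum_i \Lambda_p(\sqrt n\,\lambda_{a,1}\theta_i^n,\lambda_{a,2}) - n\lambda_a\cdot(a,1) = -n\mathbb I_p(a) + \sqrt n\,R^n_a(\theta^n) + \|\Hess_a^{-1/2}c^n_a(\theta^n)\|^2 + o(1),
\]
this identifies $R^n_a(\theta^n)$ as a centered linear statistic of $\{\ell_a(\sqrt n\,\theta_i^n)\}_i$ and $c^n_a(\theta^n)$ as the (suitably normalized) shift in the tilted mean of $(U^n,V^n)$ induced by the fluctuations of $\theta^n$, thereby proving part~\ref{main_lp_1}.

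For part~\ref{main_lp_2}, the explicit expressions obtained above show that $R^n_a$ and the components of $c^n_a$ are linear combinations of centered empirical averages $\frac{1}{\sqrt n}\sum_i(g(\sqrt n\,\theta_i^n) - \E[g(Z)])$ for $g \in \{\ell_a,\ell_{a,1},\ell_{a,2}\}$. Writing $\sqrt n\,\theta^n = Z/(\|Z\|/\sqrt n)$ for $Z \sim \gamma_2^{\otimes n}$ and Taylor expanding in $\|Z\|^2/n - 1$ decomposes each such sum into an unconstrained CLT contribution plus a correction driven by the fluctuation of $\|Z\|^2/n$. The joint multivariate CLT for $\frac{1}{\sqrt n}\sum_i(g(Z_i) - \E[g(Z)])$ over $g \in \{\ell_a,\,x^2,\,\ell_{a,1},\,\ell_{a,2}\}$ delivers the Gaussian vector $(\widetilde{\mathfrak A},\widetilde{\mathfrak D},\widetilde{\mathfrak E},\widetilde{\mathfrak G})$ with covariance matrix exactly as in~\eqref{cov_p}; the first-order Taylor correction contributes the terms $-\tfrac12\E[\ell_a'(Z)Z]\widetilde{\mathfrak D}$, $-\tfrac12\E[\ell_{a,i}'(Z)Z]\widetilde{\mathfrak D}$ (since the sphere constraint forces the empirical second moment of $\sqrt n\,\theta^n$ to equal $1$ rather than to fluctuate around $\E[Z^2]$), and the second-order Taylor contribution produces the $n^{-1/2}s_n$ term with $s_n \to \mathfrak S = \tfrac18\E[\ell_a''(Z)Z^2]\widetilde{\mathfrak D}^2$ after simplification.

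The main obstacle is the quenched local CLT of Proposition~\ref{prop-mainest}: because the tilted distribution of each $Y_i$ depends on $\theta_i^n$ through $\sqrt n\,\lambda_{a,1}\theta_i^n$, the summands are neither identically distributed nor independent of $\theta$, so classical Edgeworth theory does not apply directly. The Fourier-analytic expansion of the density must be carried out with error terms controlled uniformly over typical realizations of $\theta$, and the second-order corrections in the tilt normalization, in the mean shift, and in the boundary curvature must all be tracked at the same order as the leading Gaussian prefactor, which is a finer bookkeeping than is needed in the classical i.i.d.\ Bahadur--Rao setting.
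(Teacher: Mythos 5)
Your overall strategy mirrors the paper's: the Schechtman--Zinn representation reduces the tail event to a two-dimensional one for $\Sn = (S_1^n,S_2^n)$, the exponential tilt by $\lambda_a$ makes the event typical, and the heart of the matter is a quenched local CLT for $\Sn$ under $\widetilde\PP_\theta$ (what the paper proves as Proposition~\ref{prop-mainest}), followed by a Laplace-type integral over the domain $\bDpa$ whose curvature corrections produce $\den_a$ and $\numer_a$; part~\ref{main_lp_2} via the Gaussian-quotient representation of $\Theta^n$ and Taylor expansion in $\|Z\|^2/n-1$ is also the paper's argument (Lemma~\ref{clt_expansion}). Your formulation of the change of measure at the level of the probability rather than the density is cosmetically different but substantively equivalent.

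There is, however, a concrete error in how you account for the factor $C^n_a(\theta^n)=\exp(\|\Hess_a^{-1/2}c^n_a(\theta^n)\|^2)$. Your displayed relation
\begin{align*}
\sum_i \Lambda_p(\sqrt n\,\lambda_{a,1}\theta_i^n,\lambda_{a,2}) - n\langle\lambda_a,(a,1)\rangle = -n\mathbb I_p(a) + \sqrt n\,R^n_a(\theta^n) + \|\Hess_a^{-1/2}c^n_a(\theta^n)\|^2 + o(1)
\end{align*}
is not correct --- the prefactor identity is in fact \emph{exact} with no correction term. Indeed, $\sum_i\Lambda_p(\sqrt n\,\lambda_{a,1}\theta_i^n,\lambda_{a,2})=n\Psi^n_{p,\theta}(\lambda_a)$ by~\eqref{psin_p}, and $\mathbb I_p(a)=a\lambda_{a,1}+\lambda_{a,2}-\Psi_p(\lambda_a)$ by~\eqref{lambdaa}, so $n\langle\lambda_a,(a,1)\rangle=n\mathbb I_p(a)+n\Psi_p(\lambda_a)$; subtracting and using~\eqref{Rn} gives exactly $-n\mathbb I_p(a)+\sqrt n\,R^n_a(\theta^n)$ and nothing more. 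Consequently, the $C^n_a(\theta^n)$ factor cannot come from ``the expansion of the exponential prefactor in powers of $\sqrt n\,\theta_i^n - Z_i$'' as you assert. It must come from the local CLT: under $\widetilde\PP_\theta$ the vector $(U^n,V^n)$ is \emph{not} centered --- its mean is precisely $c^n_a(\theta^n)=\sqrt n\,\nabla(\Psi^n_{p,\theta}(\lambda_a)-\Psi_p(\lambda_a))$ (see~\eqref{def-cnhessn} and~\eqref{cn}), which is $O(1)$ in distribution rather than $o(1)$ --- and this nonzero mean shifts the Gaussian approximation so that the inverse Fourier / Laplace integral picks up the factor $\exp(\|\Hess_a^{-1/2}c^n_a(\theta^n)\|^2)$ in the density expansion~\eqref{gn_est}--\eqref{def-gn}. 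As written, your proposal has the Laplace integral producing only $(\numer_a\den_a\sqrt{2\pi n})^{-1}$ and sources the $C^n_a$ factor from the prefactor; tracing this through would either drop $C^n_a$ or double-count it depending on whether you also include the mean shift in the integral. The fix is to carry the $O(1)$ mean $c^n_a(\theta^n)$ through the tilted local CLT, as in the paper's Lemmas~\ref{conv_c_gamma} and~\ref{mu_est} and the proof of Proposition~\ref{prop-mainest}.
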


An outline of the proof of Theorem~\ref{main_lp} is given in Section~\ref{subs-reform},
with full details provided in Sections \ref{subs-mainproof} and \ref{subs-mainproof2}. See also \eqref{cn} and \eqref{Gammanx} for an interpretation of $c^n_a$ and $\Hess_a$ as the scaled mean vector and limiting covariance matrix, under a quenched tilted measure of a two-dimensional vector that arises in a convenient representation for $\Wp$  described in Section \ref{subs-reform}).

\begin{remark}\label{rem-prefactor}
  We will refer to the term $C^n_a(\theta^n) e^{\sqrt{n} R^n_{a}(\theta^n)}/\numer_a \den_{a}\sqrt{2\pi n}$
  in \eqref{tail_prob} as the ``prefactor" since
it provides a multiplicative correction to the exponentially decaying term
 $e^{-n\mathbb{I}_{p}(a) }$, which is identified by the LDP.   
 In addition, it follows from
\eqref{Cna}-\eqref{equiv-Rc} that (in distribution) $R^n_a(\Theta^n)$ and $C^n_a(\Theta^n)$
 both converge to zero as $n\to\infty$;  see also Lemma \ref{lemma-estimate} for more refined estimates. 
Further insight into the form of the prefactor can be found in
 Remarks \ref{rem-geom} and \ref{rem-mainres}.
\end{remark}

  As mentioned above, the most significant term in the prefactor 
  that depends on $\theta$ is $e^{\sqrt{n}R^n_{p,a}(\theta^n)}$. 
  The following proposition describes the additional geometric information contained in this term beyond what is available in the rate function $\mathbb{I}_p$, which is $\sigma$-almost surely insensitive to  the projection sequence $\Theta$.   
 \begin{proposition} \label{p-dependence}
 Fix $p \in (1,\infty)$, $a > 0$ such that $\mathbb{I}_p(a)<\infty$ and let $R^n_{p,a}$ be the mapping in Theorem \ref{main_lp} that is defined explicitly in \eqref{Rn}.  Then
\begin{enumerate}
\item For $p=2$, $R^n_{p,a}(\theta^n)$ is a constant regardless of the direction $\theta^n\in\mathbb{S}^{n-1}$;
\item For $p>2$, the maximum of $R^n_{p,a}(\theta^n)$ over $\theta^n\in\mathbb{S}^{n-1}$ is attained at $(\pm 1,\pm1,\ldots,\pm1)/\sqrt{n}$, while the minimum is attained at $\pm e_j$ for $j=1,\ldots,n$;
\item For $p<2$, the minimum of $R^n_{p,a}(\theta^n)$ over $\theta^n\in\mathbb{S}^{n-1}$ is attained at $(\pm 1,\pm1,\ldots,\pm1)\sqrt{n}$, while the maximum is attained at $\pm e_j$ for $j=1,\ldots,n$,
\end{enumerate}
where $(e_j)_{j=1,\ldots,n}$ are defined to be the standard basis vectors in $\mathbb{R}^n$.
\end{proposition}

 \begin{remark}
     \label{rem-geom}    
     Proposition~\ref{p-dependence} in conjunction with Theorem \ref{main_lp} shows how the sharp large deviation estimates reflect the difference in the geometry of $\ell^n_p$ spheres for
     $p \in (1, 2)$ and $p\in(2,\infty)$ with respect to the relative distribution of mass along different
       rays. This motivates 
 obtaining  sharp large deviation estimates
for projections of more general high-dimensional objects 
to uncover new geometric information about these objects.  
   \end{remark}

  As a corollary,
  combining the two parts of  Theorem~\ref{main_lp}, we obtain an alternative
  expression for the distribution of the conditioned tail probability:
  \begin{corollary}\label{alt-form}
  Fix $p \in (1,\infty)$ and $a > 0$ such that $\mathbb{I}_p(a)<\infty$. For $n\in\mathbb{N}$, recall the definitions of $(r_n)_{n\in\mathbb{N}}$, $(s_n)_{n\in\mathbb{N}}$ and $(t_n)_{n\in\mathbb{N}}$ in Theorem~\ref{main_lp}~\ref{main_lp_2}, and that of ${\mathcal H}_a$ from \eqref{Gammaa}.  Then
  \[
  \mathbb{P}_\Theta\left(\Wp>a\right):=   \mathbb{P}\left(\Wp>a\middle | \Theta^n\right)\buildrel (d) \over=
\frac{M_n}{ \numer_a \den_{a}\sqrt{2\pi n} }
e^{-n\mathbb{I}_{p}(a)  + \sqrt{n} r_n}(1+o(1)),
  \]
  where
  \begin{equation}\label{Mn}
  M_n:=\exp\left( s_n+\norm{\Hess_{a}^{-1/2}t_n}^2  \right).
    \end{equation}
  Moreover, as $n\to\infty$,
\begin{align}\label{joint-conv}
  \left(M_n,r_n\right) \Rightarrow 
  \left(\exp\left(\mathfrak{S}+
  \norm{\Hess_{a}^{-1/2}\mathfrak{T}}^2\right), \mathfrak{R} \right),
  \end{align}
  where 
$(\mathfrak{R},\mathfrak{S},\mathfrak{T}_1,\mathfrak{T}_2)$ is as defined in Theorem~\ref{main_lp}\ref{main_lp_2}.
  \end{corollary}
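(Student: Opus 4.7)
The plan is to derive Corollary~\ref{alt-form} as a direct consequence of the two parts of Theorem~\ref{main_lp}: the identity from part~\ref{main_lp_1} is combined with the distributional equivalences from part~\ref{main_lp_2}, and the continuous mapping theorem then yields the joint weak limit. There is no single hard step; the only care needed is in tracking the $o(1)$ and $o(1/\sqrt{n})$ error terms, which I interpret in the distributional sense as convergence to zero in probability.

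First, I would apply the identity of Theorem~\ref{main_lp}\ref{main_lp_1} with the deterministic $\theta$ replaced by the random sequence $\Theta$ to obtain
\[
\mathbb{P}_\Theta(\Wp > a) = \frac{C^n_a(\Theta^n)}{\numer_a \den_a \sqrt{2\pi n}}\, e^{-n \mathbb{I}_p(a) + \sqrt{n}\, R^n_a(\Theta^n)}(1+o(1)).
\]
Substituting the distributional identity~\eqref{equiv-Rc} and multiplying the first coordinate by $\sqrt{n}$ yields
\[
\sqrt{n}\, R^n_a(\Theta^n) \buildrel(d)\over= \sqrt{n}\, r_n + s_n + o(1).
\]
For the prefactor, I would substitute the distributional identity $c^n_a(\Theta^n) \buildrel(d)\over= (t_{n,1}+o(1),\, t_{n,2}+o(1))$ into the definition~\eqref{Cna} of $C^n_a(\Theta^n)$. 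Setting $t_n=(t_{n,1},t_{n,2})$ and noting that $t_n$ is tight since it converges weakly, the cross terms in the expansion of $\|\Hess_a^{-1/2}(t_n+o(1))\|^2$ are tight times $o(1)$, hence $o(1)$ in probability; this gives $\|\Hess_a^{-1/2} c^n_a(\Theta^n)\|^2 \buildrel(d)\over= \|\Hess_a^{-1/2} t_n\|^2 + o(1)$.

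Combining these two observations with $e^{o(1)}=1+o(1)$, I obtain
\[
C^n_a(\Theta^n)\, e^{\sqrt{n} R^n_a(\Theta^n)} \buildrel(d)\over= \exp\bigl(s_n + \|\Hess_a^{-1/2} t_n\|^2\bigr)\, e^{\sqrt{n} r_n}(1+o(1)) = M_n\, e^{\sqrt{n} r_n}(1+o(1)),
\]
which, inserted into the formula for $\mathbb{P}_\Theta(\Wp>a)$ above, establishes the first claim of the corollary.

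For the joint weak convergence~\eqref{joint-conv}, I would apply the continuous mapping theorem to the joint convergence $(r_n, s_n, t_{n,1}, t_{n,2}) \Rightarrow (\mathfrak{R}, \mathfrak{S}, \mathfrak{T}_1, \mathfrak{T}_2)$ from Theorem~\ref{main_lp}\ref{main_lp_2}, with the continuous map $g:(r,s,t_1,t_2) \mapsto \bigl(\exp(s+\|\Hess_a^{-1/2}(t_1,t_2)\|^2),\, r\bigr)$. This immediately gives $(M_n,r_n) = g(r_n,s_n,t_{n,1},t_{n,2}) \Rightarrow \bigl(\exp(\mathfrak{S}+\|\Hess_a^{-1/2}\mathfrak{T}\|^2),\, \mathfrak{R}\bigr)$, completing the proof.
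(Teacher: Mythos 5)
Your proposal is correct and follows essentially the same route as the paper: substitute the distributional equivalences of Theorem~\ref{main_lp}\ref{main_lp_2} into the tail-probability formula of Theorem~\ref{main_lp}\ref{main_lp_1}, absorb the $o(1)$ errors into the $(1+o(1))$ factor, and then deduce the joint limit by the continuous mapping theorem applied to the weak convergence of $(r_n,s_n,t_{n,1},t_{n,2})$. Your treatment of the cross terms in $\|\Hess_a^{-1/2}(t_n+o(1))\|^2$ via tightness is a small bit of extra care that the paper leaves implicit, and your remark $e^{o(1)}=1+o(1)$ is actually the accurate version of the paper's slightly garbled ``$\exp(o(1))=o(1)$.''
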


  \begin{proof}
  By~\eqref{tail_prob},~\eqref{Cna} and~\eqref{equiv-Rc}, the tail probability can be written as
  \begin{align*}
  \mathbb{P}_\Theta\left(\Wp>a\right) &\buildrel (d) \over= \frac{e^{\norm{\Hess_{a}^{-1/2}t_n}^2+o(1)}}{ \numer_a \den_{a}\sqrt{2\pi n} }
e^{-n\mathbb{I}_{p}(a)  + \sqrt{n} r_n+s_n+o(1)}(1+o(1))\\
& = \frac{M_n}{ \numer_a \den_{a}\sqrt{2\pi n} }
e^{-n\mathbb{I}_{p}(a)  + \sqrt{n} r_n}(1+o(1)),
  \end{align*}
  since $\exp(o(1)) = 1+o(1)$.
  Also,  from the relation~\eqref{Mn}, the mapping $(r_n,s_n,t_{n,1},t_{n,2})\mapsto (M_n,r_n)$ is continuous. Therefore,
  we may apply the continuous mapping theorem to the last display, and invoke
   Theorem~\ref{main_lp}\ref{main_lp_2} to obtain the joint convergence stated in~\eqref{joint-conv}.
  \end{proof}

  \begin{remark}
     In Theorem \ref{main_lp} and Corollary \ref{alt-form} 
      we only consider values $p \in (1,\infty)$ because
      for $p \in (0,1)$, $\ell_p^n$ balls are no longer convex and 
      the existence of even an   LDP has not been established.
      Moreover, as shown in \cite[Theorem 2.6]{GanKimRam17}, when $p=1$ the quenched LDP of the projection exists only when the projection directions satisfy $\lim_{n\to\infty}\sqrt{\frac{n}{\log n}} \max_{1 \leq i \leq n}\theta_i^{(n)}=c$ for some constant $c\in(0,\infty)$, and in that case, it is with speed $n/\sqrt{\log n}$ and the rate function is no longer universal but depends on the limiting constant $c$.
      On the other hand, we omit 
  the  case $p = \infty$, or the more general case of product measures, 
  because this is in fact simpler to analyze than the $p \in (1,\infty)$ case;
   details can be found in \cite[Section 4.2]{LiaoThesis22}.   
    \end{remark}

  \subsection{Results on projections of $\ell_p^n$ balls}
\label{subs-analres-b}
  Next, we state the corresponding sharp large deviation results for balls.
For $p \in (1,\infty)$ and $a  > 0$, 
recalling that $\lambda_{a,1}$ is the first coordinate of the maximizer $\lambda_a$ in the expression for
$\Psi_p^*(a)$  in \eqref{lambda_x} and $\Hess_a$ is as defined in \eqref{Gammaa}, 
define the positive constant $\gamma_a=\gamma_{p,a}$ via the relation
\begin{equation}\label{def-gammapa}
\gamma_a^2 :=\lambda_{a,1}^2(1+a\lambda_{a,1})^2(\det\Hess_a)^2\abs{-\frac{a(p-1)}{p^2}\lambda_{a,1}+\frac{2a}{p}(\Hess_a)^{-1}_{12}+(\Hess_a)^{-1}_{22}+\frac{a^2}{p^2}(\Hess_a)^{-1}_{11}}.
\end{equation}
\begin{theorem}\label{main_lp-b}
Fix $p \in (1,\infty)$ and $a > 0$ such that $\mathbb{I}_p(a)<\infty$. Then
for $n \in \N$, 
\begin{align}\label{tail_prob-b}
  \mathbb{P}_\theta\left(\Wpb>a\right) =
\frac{C^n_a(\theta^n)}{ \gamma_a\sqrt{2\pi n} }
e^{-n\mathbb{I}_{p}(a)  + \sqrt{n} R^n_{a}(\theta^n)}(1+o(1)),
\end{align} 
where 
$\gamma_{a} = \gamma_{p,a}$ is the constant defined in~\eqref{def-gammapa}, and $R^n_a$ and $C^n_a$ are the functions defined in Theorem~\ref{main_lp}.
\end{theorem}

\begin{remark}
  \label{rem-ballsphere}
\begin{enumerate}[label=(\roman*)]
\item Note that the tail probability in~\eqref{tail_prob-b} is a geometric quantity,  equal to 
   the volume of  the $p$-spherical cap (at level $a$) of $\ell^n_p$ balls along the direction $\theta^n$.
\item Recall that it follows from the results of \cite{GanKimRam17} (recapitulated here as Theorem \ref{LDP_p})
  that $\ell_p^n$ spheres and balls cannot be distinguished because the 
  large deviation speeds and rate functions for 
  random projections of $\ell^n_p$ balls and spheres coincide.
  In contrast, we see from~\eqref{tail_prob} and~\eqref{tail_prob-b} that although the two 
  prefactors have a similar form, their actual values differ since in general
  $\gamma_a \neq \kappa_a \xi_a$. 
Thus, 
the sharp large deviation estimates obtained here are sufficiently refined to distinguish these two objects, whereas the LDP rate function does not do so.
\item
  As in Remark \ref{rem-geom}, due to the appearance of $R_a^n$ in \eqref{tail_prob-b}, the sharp large deviation estimate provides more insight into the distinction between  the geometry of $\ell_p^n$ balls
  with  $p \in (1,2)$ and $\ell_p^n$ balls with $p \in (2,\infty)$.   
\end{enumerate} 
\end{remark}

Similar to Corollary~\ref{alt-form}, we have the following immediate corollary for balls:
\begin{corollary}
Fix $p \in (1,\infty)$ and $a > 0$ such that $\mathbb{I}_p(a)<\infty$. For $n\in\mathbb{N}$, recall the definitions of $(M_n)_{n\in\mathbb{N}}$ and $(r_n)_{n\in\mathbb{N}}$ in Corollary~\ref{alt-form} and let $\gamma_a$ be as in \eqref{def-gammapa}. Then for $n\in\N$,
  \[
  \mathbb{P}_\Theta\left(\Wpb>a\right) \buildrel (d) \over=
\frac{M_n}{ \gamma_{a}\sqrt{2\pi n} }
e^{-n\mathbb{I}_{p}(a)  + \sqrt{n} r_n}(1+o(1)),
  \]
  where~\eqref{Mn} and~\eqref{joint-conv} hold.
\end{corollary}

 \subsection{Reformulation of the problem and  outline of the proof } 
   \label{subs-reform}

Fix $p\in (1,\infty)$. 
As mentioned in the introduction, one of the reasons the estimate
\eqref{tail_prob} is challenging to establish is that
$\Wp$ and $\Wpb$ are randomly weighted sums of random variables that are not independent, and furthermore,
the random weights are also themselves not independent.
In this section 
 we provide a brief outline of our proof and additional insight into the form of the sharp large deviation
 estimates, contrasting them with existing results,
 and explaining  the role  of various constants.

 The first step of the proof is to  reformulate the probability of the rare event in terms of a certain
multi-dimensional random vector
($\Snp$ in the case of spheres and $\Snpb$ in the case of balls) 
using  a well known probabilistic representation
 for the random vector $X^{(n,p)}$  that we now recall. 
  Assume without loss of generality that the probability  space
  $(\Omega, {\mathcal F}, \PP)$ is large enough to also support 
an  i.i.d.\ sequence of generalized $p$-Gaussian random variables 
$(Y_i^{(p)})_{i \in \N}$,
independent of $\Theta$, and define the 
  $n$-dimensional random vector $Y^{(n,p)} := (Y_1^{(p)}, \ldots, Y_n^{(p)})$, 
  where each $Y_j^{(p)}$ has  density $\dens_p$ defined in~\eqref{pNormal}. 
  Then, it  follows from~\cite[Lemma 1]{Schechtman90} (see also a statement of this property at the bottom of p.\ 548 in \cite{Bor90})  that 
 \begin{equation}\label{equv_relation}
X^{(n,p)} \buildrel (d) \over =\frac{Y^{(n,p)}}{\norm{Y^{(n,p)}}_{n,p}}, \qquad n \in\N, 
\end{equation}
where recall that   $\|x\|_{n,p}$ denotes the $p$-norm in $\R^n$. 
Define the $\mathbb{R}^2$-valued random vector 
\begin{equation} \label{joint_pdf}
\Snp:=\frac{1}{n}\sum_{j=1}^n\left(\sqrt{n} \Theta^n_jY^{(p)}_j,\abs{Y^{(p)}_j}^p \right).
\end{equation}

In view of~\eqref{W_p} and the independence of $X^{(n,p)}$,~\eqref{equv_relation}, and $\Theta$, for $a >0$ and $\theta \in \SS$,  we may rewrite
 the tail probability on the left-hand side of \eqref{tail_prob} as
\begin{align}
 \notag
\mathbb{P}_\theta\left(\Wp>a\right) &=
\mathbb{P}\left(\frac{n^{1/p}}{n}\sum_{j=1}^n\frac{\sqrt{n}\theta^n_jY_j^{(p)}}{\norm{Y^{(n,p)}}_{n,p}}>a\right) \\
 \notag
&= \mathbb{P}\left(\frac{1}{n}\sum_{j=1}^n\sqrt{n}\theta^n_jY^{(p)}_j>a\left(\frac{1}{n}\sum_{j=1}^n\abs{Y^{(p)}_j}^p \right)^{1/p}\right)\\
\label{eq-reform}
& = \mathbb{P}_\theta \left(\Snp\in \bDpa\right),  
\end{align}
where $\bDpa$ is the two-dimensional domain defined by 
\begin{equation}
  \label{def-doma}
  \bDpa :=\left\{(x_1,x_2)\in\mathbb{R}^2: x_2>0,x_1>ax_2^{1/p} \right\}. 
\end{equation}

On the other hand, again from~\cite[Lemma 1]{Schechtman90}, we also have an equivalent representation for $\mathscr{X}^{(n,p)}$:
\begin{equation}\label{equv_relation-b}
\mathscr{X}^{(n,p)} \buildrel (d) \over =\mathscr{U}^{1/n}\frac{Y^{(n,p)}}{\norm{Y^{(n,p)}}_{n,p}}, \qquad n \in\N, 
\end{equation}
where $\mathscr{U}$ is a uniform random variable on $(0,1)$, independent of the sequence
  $(Y^{(n,p)})_{n \in \N}$.
Define the $\mathbb{R}^3$-valued random vector
\begin{equation} \label{joint_pdf-b}
  \Snpb:=\left(\frac{1}{n}\sum_{j=1}^n\sqrt{n} \Theta^n_jY^{(p)}_j,\frac{1}{n}\sum_{j=1}^n\abs{Y^{(p)}_j}^p,\mathscr{U}^{1/n} \right)
   =  \left( \bar{S}^{(n,p)}, \mathscr{U}^{1/n} \right).
\end{equation}
From the equivalent representation~\eqref{equv_relation-b}, for $a>0$ and $\theta\in\SS$, we may rewrite the tail probability of $\Wpb$ as
\begin{align}
 \notag
\mathbb{P}_\theta\left(\Wpb>a\right) &=
\mathbb{P}\left(\frac{n^{1/p}}{n}\sum_{j=1}^n\mathscr{U}^{1/n}\frac{\sqrt{n}\theta^n_jY_j^{(p)}}{\norm{Y^{(n,p)}}_{n,p}}>a\right) \\
 \notag
&= \mathbb{P}\left(\mathscr{U}^{1/n}\frac{1}{n}\sum_{j=1}^n\sqrt{n}\theta^n_jY^{(p)}_j>a\left(\frac{1}{n}\sum_{j=1}^n\abs{Y^{(p)}_j}^p \right)^{1/p}\right)\\
\label{eq-reform-b}
& = \mathbb{P}_\theta \left(\Snpb\in \bDpab\right),  
\end{align}
where  $\bDpab$ is the three-dimensional domain given  by 
\begin{equation}
  \label{def-doma-b}
  \bDpab :=\left\{(x_1,x_2,y)\in\mathbb{R}^3: 1 >  y >  0, x_2>0,x_1y>ax_2^{1/p} \right\}.  
\end{equation}
\begin{remark}
  Throughout the paper, we will typically use an overline to denote
  quantities related to  these multi-dimensional reformulations, and script fonts for quantities related to $\ell^n_p$ balls. 
\end{remark}

 While several results on sharp large deviations in multiple dimensions
 have been obtained (see, e.g., \cite{Andriani97,Joutard17} as well as \cite{Barbe05} for a comprehensive
 list of references), none of these cover the cases of
 interest in \eqref{eq-reform} and \eqref{eq-reform-b}. 
In particular, 
  the work  \cite{Andriani97} considers  empirical means of 
  i.i.d.\ random vectors whereas, under $\PP_{\theta}$,
     $\bar{S}^{(n,p)}$ is the empirical mean  of non-identical random vectors. 
  Moreover, the results of  \cite{Joutard17} also
  do not apply since the condition imposed in \cite[Assumption (A.2)]{Joutard17}
  is not satisfied here due to the additional $\sqrt{n}$ factor in the exponent of~\eqref{tail_prob} compared with~\cite[Equation (3)]{Joutard17}.
  Instead, our proof proceeds by first exploiting quantitative asymptotic independence results of
    the weights $(\Theta^n_i)_{j=1,\ldots,n}$    obtained
    in Section \ref{subs-prep},  and combining them with new 
     asymptotic estimates for certain Laplace-type integrals stated in Section \ref{sec-pfmain}.

\begin{remark}
  \label{rem-mainres} 
Comparing the estimate in \eqref{tail_prob} 
with the sharp large deviation estimate 
for the projection of an i.i.d.\ sum 
 onto the vector $\mathfrak{I}^n = (1,1,\ldots,1)/\sqrt{n}$
  given in ~\eqref{BRR},  we 
     see that $\den_a$ in \eqref{tail_prob} plays a role similar to $\bar{\sigma}_a \tau_a$ in \eqref{BRR}.
     On the other hand, the additional constant $\kappa_a$ in  \eqref{tail_prob} arises due to the geometry of the
     domain $\bDpa$ defined in~\eqref{def-doma} and the fact that we obtain this estimate by reformulating it in terms of a two-dimensional  problem.   
From a technical point of view, the additional $\theta^n$-dependent
terms $R^n_a(\theta^n)$ and $C^n_a(\theta^n)$ arise because we are
 considering (quenched) sharp large deviations of a vector $\Snp$ whose independent summands 
 are not identically distributed under $\PP_\theta$ on account of the different weights 
 arising from the coordinates of  $\theta^n$.
 From their  exact definitions  given in~\eqref{Rn} and~\eqref{def-cnhessn},
 it is easy to see that both these terms would vanish if we considered $\theta^n \in \SS^{n-1}$ with identical weights such as  
 $\theta^n = \mathfrak{I}^n = (1,1,\ldots,1)/\sqrt{n}$.
  \end{remark}
  
\section{An Importance Sampling Algorithm}\label{IS-algorithm}
To numerically compute  the tail probability
$\PP_\theta (\Wp > a) = \mathbb{E}_\theta[1_{\left\{\Wp >a\right\}}]$
using standard Markov Chain Monte Carlo (MCMC), 
 for any $\theta^n \in \mathbb{S}^{n-1}$, one would have to generate independent 
samples of $X^{(n,p)}$ from the cone measure $\mu_{n,p}$ defined in~\eqref{cone-meas},
and use the  empirical mean as an estimate of  the expectation. 
However, since the probability is very small, this is inefficient or computationally
infeasible for even moderate values of $n$.
In this section, we propose an alternative {\it importance sampling (IS) 
algorithm} to more efficiently
  compute the tail probability numerically, for a range of values of $n$,
  and compare this with the analytical estimate obtained in Theorem \ref{main_lp}. 
For $a>0$, fix $p \in (1,\infty)$ and recall the constant $\lambda_a$ defined in~\eqref{lambdaa}. 
 Also,  recall the definition of the density $\dens_p$ in \eqref{pNormal}. 
Given $n \in \N$,
let $\widetilde{Y}^{(n,p)}:=(\widetilde{Y}^{(n,p)}_1,\ldots,\widetilde{Y}^{(n,p)}_n)$, where $\widetilde{Y}^{(n,p)}_j$, $j= 1,\ldots, n,$ are
 random variables defined on $(\Omega, {\mathcal F}, \PP)$ that are independent under $\mathbb{P}_\theta$ for each $\theta\in\SS$, and  such that
   $\widetilde{Y}^{(n,p)}_j$ has density 
  \begin{equation}\label{tilt_pdf}
    \widetilde{f}_{p,j}^{n}(y) :=  \exp\left(\left\langle \lambda_a,\left(\sqrt{n}\theta^n_jy,\abs{y}^p\right)\right\rangle-\Lambda_p\left(\sqrt{n}\theta^n_i\lambda_{a,1},\lambda_{a,2}\right)\right) f_p(y), 
    \quad y \in \R,
  \end{equation} 
    where we suppress from the notation the explicit dependence of $\widetilde{f}_{p,j}^{n}$ on $\theta^n$. Also define
\begin{equation}\label{Wtilde}
\widetilde{W}^{(n,p)} :=\frac{n^{1/p}}{n^{1/2}}\sum_{j=1}^n
\frac{\widetilde{Y}^{(n,p)}_j \Theta^n_j}{\norm{\widetilde{Y}^{(n,p)}}_{n,p}}. 
\end{equation}
In view of \eqref{tilt_pdf} and \eqref{Wtilde},
    it then follows that 
\begin{equation}
  \label{imp-samp}
\mathbb{P}_\theta(\Wp>a) = 
\mathbb{E}_\theta\left[1_{\left\{\widetilde{W}^{(n,p)}>a\right\}}\prod_{j=1}^n\exp\left(-\left\langle \lambda_a,\left(\sqrt{n}\theta^n_j\widetilde{Y}^{(n,p)}_j,\abs{\widetilde{Y}^{(n,p)}_j}^p\right)\right\rangle+\Lambda_p\left(\sqrt{n}\theta^n_i\lambda_{a,1},\lambda_{a,2}\right)\right)\right ]. 
\end{equation}
The IS algorithm
estimates the tail probability on the left-hand side of \eqref{imp-samp}
by first sampling a direction $\theta^n$ according to $\sigma_n$ and then sampling from  i.i.d.\  copies of the vector $\widetilde{Y}^{(n,p)}:=(\widetilde{Y}^{p}_1,\ldots,\widetilde{Y}^{p}_n)$,  independently
of the $\theta^n$ sample, 
to approximate the expectation on the right-hand side of \eqref{imp-samp} by a standard Monte Carlo
  estimate.

\begin{figure}[h!]
  \centering
  \begin{subfigure}[b]{0.4\linewidth}
    \includegraphics[width=\linewidth]{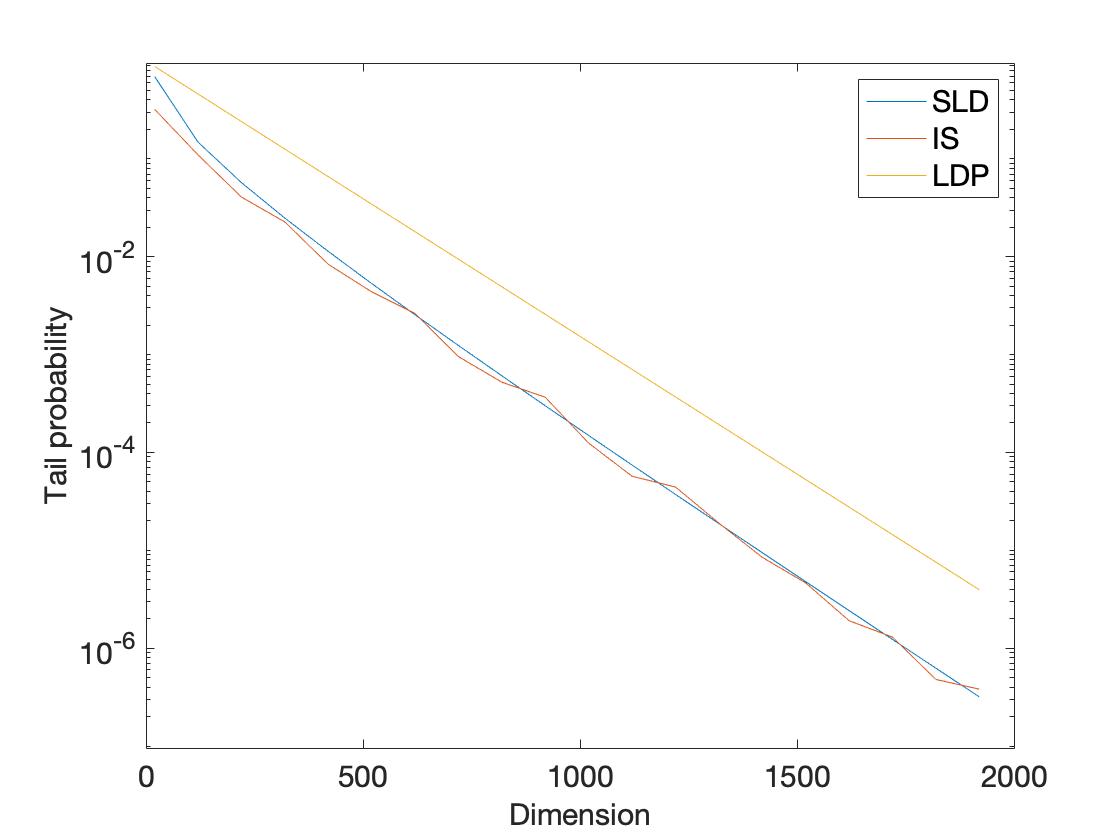}
    \caption{$a=0.1$.}
  \end{subfigure}
  \begin{subfigure}[b]{0.4\linewidth}
    \includegraphics[width=\linewidth]{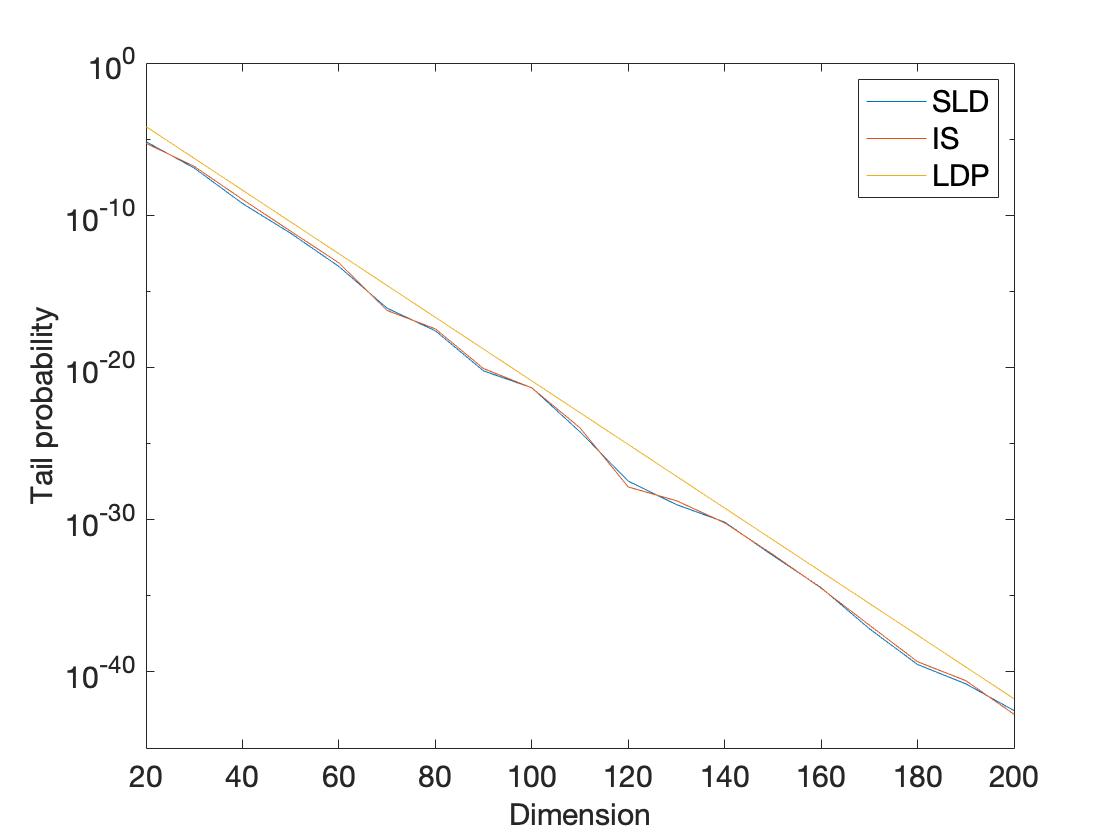}
    \caption{$a=0.7$.}
  \end{subfigure}
  \caption{Log scale plot of estimates of $P_\theta(W^{(n,3)} > a)$ vs.\ dimension.}
    \label{p3}
\end{figure}

The results are displayed in
Figures $1$--$2$ and Tables $1$--$2$. 
In each case, the IS estimate is computed as above, the
  LDP estimate is $e^{-n \mathbb{I}_p(a)}$  (i.e., with $1$ as a prefactor),
  and the sharp large deviation (SLD) estimate is  the prefactor (see Remark \ref{rem-prefactor}) times $e^{-n\mathbb{I}_p(a)}$. 
We consider  $p=3$ with only $100$ samples since we do not have closed form expressions for various functions needed in the IS simulation, thus requiring greater computational effort per sample. In Table~\ref{table-p3a07} we also calculate
the confidence interval of the
IS estimate and tabulate the relative distance between the SLD and IS estimates, computed as
$\left(\text{SLD}-\text{IS}\right)\times100/\text{IS}$.
First, we see from Figure~\ref{p3} that the LDP estimate is not a good enough approximation,
but the sharp large deviation (SLD) estimate  does a much better job.
For large $a$, namely $a = 0.7$,  in Figure~\ref{p3}(B)
and Table~\ref{table-p3a07}, we
see that the  SLD and IS estimates  match pretty well even for
small $n$ (namely, even $n = 20$).
However, this is not the case for $a$ small,
namely for $a = 0.1$.  In this case, as evident from Figure~\ref{p3}(A) and  Table~\ref{table-p3a01},  the
SLD estimate appears to achieve the same accuracy only for much larger $n$, 
which likely reflects  the dependence of the $o(1)$ term in \eqref{tail_prob} on $a$.

 \begin{table}[h!]
\centering
\begin{tabular}{ |c|c|c|c|c|}  
\hline
$n$ & SLD & IS & Relative distance  & Confidence Interval\\
\hline 
$20$		& $6.8707\times10^{-6}$	& $5.3317\times10^{-6}$	&$27.18\%$& $[3.4203\times10^{-6},7.2430\times10^{-6}]$\\
$80$		& $2.5403\times10^{-18}$	& $3.4245\times10^{-18}$	&$-25.82\%$& $[1.5542\times10^{-18},5.2948\times10^{-18}]$\\
$140$	& $6.9378\times10^{-31}$	& $6.1856\times10^{-31}$	&$12.16\%$& $[2.5305\times10^{-31},9.8407\times10^{-31}]$\\
$200$	& $2.8813\times10^{-43}$	& $1.6547\times10^{-43}$	&$74.13\%$& $[4.0920\times10^{-44},2.9002\times10^{-43}]$\\
\hline
\end{tabular}
\caption{Estimates of $P_\theta(W^{(n,p)} > 0.7)$ for $p = 3$. The sample size for IS is  $100$.}
\label{table-p3a07}
\end{table} 

 \begin{table}[h!]
\centering
\begin{tabular}{ |c|c|c|c|c|}  
\hline
$n$ & SLD & IS & Relative distance& Confidence Interval\\
\hline 
$20$		& $6.9193\times10^{-1}$	& $3.2004\times10^{-1}$	&$116.20\%$&  $[2.5636\times10^{-1},3.8372\times10^{-1}]$\\
$420$		& $1.1317\times10^{-2}$	& $8.3597\times10^{-3}$	&$35.38\%$& $[5.6085\times10^{-3},1.1110\times10^{-2}]$\\
$820$	& $6.0651\times10^{-4}$	& $5.2198\times10^{-4}$	&$16.19\%$& $[3.2412\times10^{-4},7.1985\times10^{-4}]$\\
$1220$	& $3.7235\times10^{-5}$	& $4.4306\times10^{-5}$	&$15.96\%$& $[2.8312\times10^{-5},6.0299\times10^{-5}]$\\
\hline
\end{tabular}
\caption{Estimates of $P_\theta(W^{(n,p)} > 0.1)$ for $p = 3$. The sample size for IS is  $100$.}
\label{table-p3a01}
\end{table}

Finally, we also ran simulations for different realizations $\theta$ of the direction sequence $\Theta$.
 We see from Figure~\ref{compare} that different projection direction sequences result
 in  fluctuations around the
quantity $e^{-n\mathbb{I}_p(a)}/\kappa_a \xi_a \sqrt{2 \pi n} $, which is the 
 basic sharp large deviation
 estimate obtained by  ignoring the $\theta^n$-dependent
   terms in the prefactor in   \eqref{tail_prob}.  As  shown in Theorem~\ref{main_lp}\ref{main_lp_2}, these fluctuations converge in distribution to
functionals  of a multi-dimensional Gaussian vector with an explicit covariance matrix.

\begin{figure}[h!]
\includegraphics[scale=0.2]{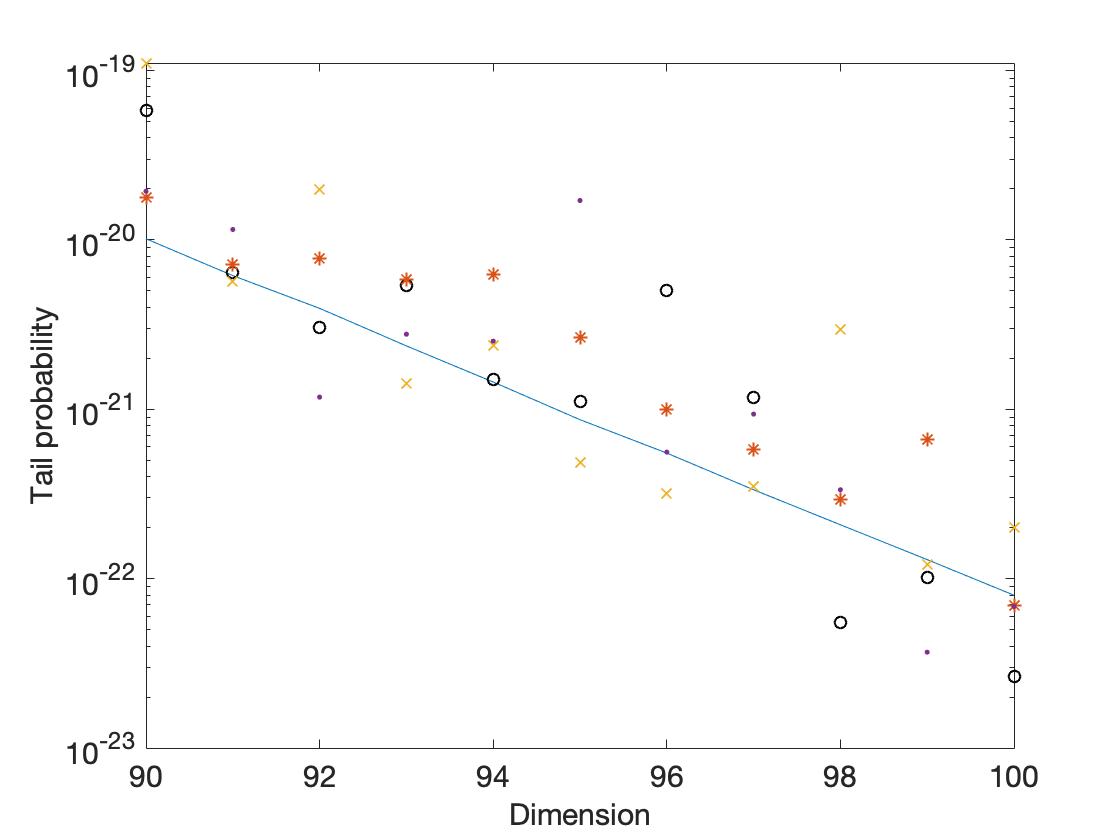}
\caption{Log scale plot of estimates of $P_\theta(W^{(n,p)} > 0.7)$ vs.\ dimension for $p = 3$. Solid line is $e^{-n\mathbb{I}_p(a)}/\kappa_a \xi_a \sqrt{2 \pi n} $. Scatter points are SLD estimates for different direction sequences.}
\label{compare}
\end{figure}

\section{Asymptotic Independence Results for the Weights}
\label{subs-prep}

Recall that $\mathcal{P}(\mathbb{R})$ is the set of probability measures on $\mathbb{R}$.
For $p\in[1,\infty)$, denote
\[\mathcal{P}_p(\mathbb{R}):=\left\{\nu\in\mathcal{P}(\mathbb{R}): \int_\mathbb{R} \abs{u}^p\nu(du)<\infty \right\},\]
and equip $\mathcal{P}_p(\mathbb{R})$ with the $p$-Wasserstein distance defined to be
\begin{equation}\label{wass}
  \mathcal{W}_p(\nu,\nu') := \inf_{\pi\in\Pi(\nu,\nu')}\int_{\mathbb{R}^2}\abs{x-y}^p\pi(dx,dy), \quad \nu,\nu'\in\mathcal{P}_p(\mathbb{R}),
\end{equation}
where $\Pi(\nu,\nu')$ denotes the set of couplings of $\nu$ and $\nu'$ or equivalently,
the set of probability measures on $\mathbb{R}^2$ whose first and second marginals coincide with $\nu$ and $\nu'$, respectively.

We now define a function with polynomial growth in the natural way. 

\begin{definition}
  \label{def-polygrowth}
Given $m \in \N$, we say that a function $f:\mathbb{R}\to\mathbb{R}$ has polynomial growth of degree $m$ if there exist $T\in\mathbb{R}$ and $C\in(0,\infty)$ such that 
\[
\abs{f(t)} \leq C\left(\abs{t}^m+1\right),\quad \text{for}\quad \abs{t} > T.
\]
We say a function $f:\mathbb{R} \to \mathbb{R}$ has polynomial growth if
it has polynomial growth of degree $m$ for some $m \in \N$. 
\end{definition}

Next, we recall the definition of the $p$-Wasserstein distance on probability measures.
\begin{lemma} [Definition 6.8 and Theorem 6.9 of~\cite{Villani08}]\label{wass-conv}
Let $(\nu^n)_{n\in\mathbb{N}}\subset\mathcal{P}_p(\mathbb{R})$ and $\nu\in\mathcal{P}_p(\mathbb{R})$. Then the following two statements are equivalent:
\begin{enumerate}
\item  $\mathcal{W}_p(\nu^n,\nu)\to 0$.
\item For any continuous $\phi:\mathbb{R}\to\mathbb{R}$ that has polynomial growth of degree $p$
\[
\int_\mathbb{R}\phi(x)\nu^n(dx)\to\int_\mathbb{R}\phi(x)\nu(dx). 
\]
\end{enumerate}
\end{lemma}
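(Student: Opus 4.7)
The plan is to prove the two implications separately, with the common thread being a coupling argument combined with a uniform integrability step; the principal tool in both directions is Vitali's convergence theorem, which upgrades convergence in probability to $L^p$ convergence provided one can control tails.

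For the forward direction $(1) \Rightarrow (2)$, I would first pick, for each $n$, an optimal (or near-optimal) coupling $\pi^n \in \Pi(\nu^n, \nu)$ so that, writing $(X^n, X) \sim \pi^n$, one has $\mathbb{E}[|X^n - X|^p] = \mathcal{W}_p(\nu^n, \nu) + o(1) \to 0$. In particular $X^n \to X$ in probability. For continuous $\phi$ of polynomial growth of degree $p$, continuity gives $\phi(X^n) \to \phi(X)$ in probability. The bound $|\phi(x)| \le C(1 + |x|^p)$ reduces the question to uniform integrability of the family $\{|X^n|^p\}$. Since $|X^n|^p \le 2^{p-1}(|X|^p + |X^n-X|^p)$ and $\mathbb{E}[|X^n-X|^p] \to 0$, the family $\{|X^n|^p\}$ is dominated in $L^1$ by a convergent sequence plus a fixed integrable function, hence uniformly integrable. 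Vitali then yields $\mathbb{E}[\phi(X^n)] \to \mathbb{E}[\phi(X)]$, which is exactly (2).

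For the converse $(2) \Rightarrow (1)$, I would first deduce that $\nu^n \Rightarrow \nu$ by applying (2) to bounded continuous test functions (which are of polynomial growth of any degree), and also deduce the $p$-th moment convergence $\int |x|^p \, \nu^n(dx) \to \int |x|^p \, \nu(dx)$ by taking $\phi(x) = |x|^p$. Next, I would invoke the Skorokhod representation theorem to realize the weak convergence via a.s.\ convergent random variables $X^n \to X$ on a common probability space, with $X^n \sim \nu^n$ and $X \sim \nu$. Then $|X^n|^p \to |X|^p$ almost surely, and since $\mathbb{E}[|X^n|^p] \to \mathbb{E}[|X|^p]$, Scheff\'e's lemma (or Vitali applied to $|X^n|^p$) gives $L^1$ convergence and hence uniform integrability of $\{|X^n|^p\}$. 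By the elementary inequality $|X^n - X|^p \le 2^{p-1}(|X^n|^p + |X|^p)$, the family $\{|X^n - X|^p\}$ is also uniformly integrable, so $\mathbb{E}[|X^n - X|^p] \to 0$ by Vitali. Since $\mathcal{W}_p(\nu^n, \nu) \le \mathbb{E}[|X^n - X|^p]$ for this particular coupling, the Wasserstein convergence follows.

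The main obstacle I expect is the uniform integrability step in each direction, and in particular handling the degenerate case where $\nu$ may not have moments strictly larger than $p$, so one cannot simply use a moment bound of the form $\sup_n \mathbb{E}[|X^n|^{p+\epsilon}] < \infty$ to get uniform integrability. Instead, the argument must genuinely exploit the joint convergence of $|X^n|^p$ in distribution together with convergence of its $L^1$ norm, which is precisely the content of Scheff\'e's lemma applied to the nonnegative random variables $|X^n|^p$; this subtlety is what forces the Skorokhod representation detour in the reverse direction, rather than a direct estimate from (2).
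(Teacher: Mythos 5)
The paper does not give a proof of this lemma at all; it simply invokes it as Definition~6.8 and Theorem~6.9 of Villani's book, so there is no in-paper argument to compare against. Your proposal is a correct, self-contained proof and is essentially the standard textbook argument, very close to Villani's own: near-optimal couplings plus a uniform-integrability/Vitali step in the forward direction, and Skorokhod representation, Scheff\'e applied to the nonnegative variables $|X^n|^p$, and Vitali applied to $|X^n-X|^p$ in the reverse direction. Your closing remark that one cannot simply appeal to a bound of the form $\sup_n\mathbb{E}[|X^n|^{p+\varepsilon}]<\infty$ --- because $\nu$ may have no moments of order strictly above $p$ --- identifies exactly the point where a careless version of this proof would break, and Scheff\'e is precisely the right substitute. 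Two small points of exposition worth tightening, neither of which is a genuine gap: (a) in the forward direction the variable you write as $X$ is the second coordinate of the coupling $\pi^n$ and hence lives on a different probability space for each $n$ (always with law $\nu$), so it is not literally ``a fixed integrable function''; the argument survives because uniform integrability of $\{|X^n|^p\}$ depends only on the laws $\nu^n$, and the $\nu$-marginal of every $\pi^n$ is the same; (b) for the same reason, ``$\phi(X^n)\to\phi(X)$ in probability'' should be read as $|\phi(X^n)-\phi(X)|\to 0$ in probability within each joint space, which requires pairing the continuity of $\phi$ with tightness of the fixed $\nu$-marginal rather than a bare application of the continuous mapping theorem to a single limiting variable.
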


For each $n \in \N$ and $\theta \in \SS$, let $L_\theta^n$
denote the empirical measure of the coordinates of the scaled projection direction $\sqrt{n} \theta^n$:
\begin{align}\label{emp_L}
\empn := \frac{1}{n}\sum_{i=1}^n\delta_{\sqrt{n}\theta^n_i}.
\end{align}

The following strong law of large numbers for $(\empn)_{n \in \N}$ was established
in~\cite[Lemma 5.11]{GanKimRam17}.   Recall that $\normal$ denotes the standard normal distribution. 
\begin{lemma}[Lemma 5.11 of \cite{GanKimRam17}]
  \label{slln}
For $p\in (1,\infty)$, for $\smeas$-a.e.\ $\theta \in \SS$,
\[
\mathcal{W}_p\left(\empn,\normal\right)\to 0, \quad \text{ as } n \rightarrow \infty. 
\]
\end{lemma}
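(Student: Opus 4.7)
The plan is to use Lemma~\ref{wass-conv} to reduce $\mathcal{W}_p$-convergence to the convergence of integrals $\int\phi\, d\empn\to \int\phi\, d\normal$ against every continuous $\phi$ of polynomial growth of degree $p$, and to establish the latter with rates fast enough for a Borel--Cantelli argument. The crucial observation is that, for each fixed $n$, the random variable $\mathcal{W}_p(\empn,\normal)$ depends on the sequence $\theta$ only through its $n$-th coordinate $\theta^n$, whose law under $\sigma$ is (by assumption) the rotationally invariant $\mu_{n,2}$, regardless of the joint structure of $\sigma$. Since the Borel--Cantelli lemma requires only summability and not independence across $n$, it therefore suffices to show that for every $\epsilon>0$,
\[
\sum_{n=1}^\infty \PP_{\mu_{n,2}}\left(\mathcal{W}_p(\empn,\normal) > \epsilon\right) < \infty,
\]
and the almost sure conclusion will then hold uniformly across all admissible $\sigma$.

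I would first treat the weak-convergence component. For any $K$-Lipschitz $\phi:\mathbb{R}\to\mathbb{R}$, set $F_\phi(\theta):=\tfrac{1}{n}\sum_{i=1}^n \phi(\sqrt{n}\,\theta_i)$; a direct Cauchy--Schwarz estimate shows $F_\phi$ is $K$-Lipschitz on $\sphere^{n-1}$ in the Euclidean metric, so Levy's concentration inequality on the sphere yields
\[
\PP_{\mu_{n,2}}\left(|F_\phi(\theta^n) - \E F_\phi|>\epsilon\right) \leq 2\exp\left(-\frac{c\, n\,\epsilon^2}{K^2}\right)
\]
for a universal constant $c>0$. The mean satisfies $\E F_\phi = \E[\phi(\sqrt{n}\,\theta^n_1)]\to \int \phi\, d\normal$ via the Gaussian representation $\theta^n\stackrel{(d)}{=} Z^n/\|Z^n\|$ with $(Z_i)_{i\in\mathbb{N}}$ i.i.d.\ standard Gaussian together with $\|Z^n\|/\sqrt{n}\to 1$ a.s. Choosing $\epsilon_n=n^{-1/4}$ renders the tail probabilities summable, and a Borel--Cantelli argument applied along a countable weak-convergence-determining family of bounded Lipschitz functions yields $\empn\Rightarrow\normal$ for $\sigma$-a.e.\ $\theta$.

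To upgrade weak convergence to $\mathcal{W}_p$-convergence one additionally needs $\int|x|^p \empn(dx) = n^{p/2-1}\|\theta^n\|_{n,p}^p\to \E|Z|^p$ a.s. Using the same Gaussian representation this equals $\bigl(\tfrac{1}{n}\|Z^n\|_{n,p}^p\bigr)\bigl(\sqrt{n}/\|Z^n\|\bigr)^p$, whose factors converge a.s.\ to $\E|Z|^p$ and $1$ respectively by the ordinary strong law; the quantitative statement under $\mu_{n,2}$ again follows from concentration on $\sphere^{n-1}$ applied to $\theta\mapsto \|\theta\|_{n,p}$, which is $1$-Lipschitz for $p\geq 2$. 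Weak convergence combined with convergence of $p$-th moments yields uniform integrability of $|x|^p$ under $\empn$, and a routine truncation argument upgrades convergence for bounded continuous test functions to convergence of $\int\phi\, d\empn$ for every continuous $\phi$ of polynomial growth of degree $p$; Lemma~\ref{wass-conv} then concludes. I expect the main technical obstacle to be the moment bound for $1<p<2$: since $\theta\mapsto \|\theta\|_{n,p}^p$ is not uniformly Lipschitz on $\sphere^{n-1}$ in this range, to obtain summable concentration rates one must either restrict to the high-probability event $\{\max_i|\theta^n_i|\lesssim \sqrt{\log n/n}\}$ on which the effective Lipschitz constant is controlled, or bypass concentration and run the Borel--Cantelli argument directly on the distributional identity $\|\theta^n\|_{n,p}^p \stackrel{(d)}{=} \|Z^n\|_{n,p}^p/\|Z^n\|^p$.
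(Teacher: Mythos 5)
The paper does not supply its own proof of this lemma: it is quoted verbatim as Lemma~5.11 of \cite{GanKimRam17}, so there is no internal argument here against which to check you line by line. Evaluated on its own terms, your outline is sound and, importantly, you put your finger on the one structural subtlety the statement actually contains: the measure $\smeas$ on $\SS$ is pinned down only through its $n$-th marginals $\mu_{n,2}$, so one cannot couple $(\theta^n)_n$ across $n$ and invoke a strong law directly. Your reduction --- observe that $\mathcal{W}_p(\empn,\normal)$ depends on $\theta$ only through $\theta^n$, obtain tail bounds summable in $n$ under each marginal $\mu_{n,2}$, and apply the first Borel--Cantelli lemma which needs no joint structure --- is precisely the correct way to handle this.

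The details are mostly right. For the weak-convergence half, the computation showing $F_\phi(\theta)=\tfrac{1}{n}\sum_i\phi(\sqrt{n}\theta_i)$ is $K$-Lipschitz on $\sphere^{n-1}$ (via Cauchy--Schwarz) is correct, L\'evy concentration does yield $e^{-cn\epsilon^2/K^2}$, and $\E F_\phi\to\int\phi\,d\normal$ follows from $\theta^n\buildrel(d)\over= Z^n/\|Z^n\|$ plus bounded convergence. (The choice $\epsilon_n=n^{-1/4}$ is harmless but superfluous --- for each fixed $\epsilon>0$ the tail is already summable, and you need the mean to converge with no rate anyway.) For the $p$-th moment half you correctly isolate $n^{p/2-1}\|\theta^n\|_{n,p}^p$ and the Lipschitz obstruction when $1<p<2$; that obstruction is real, and in fact the sphere-concentration route also becomes somewhat delicate for $p>2$ when you pass from $\|\theta\|_{n,p}$ to its $p$-th power near a mean of order $n^{1/p-1/2}$. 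Of your two proposed repairs, the second is the cleaner one and works uniformly for all $p\in(1,\infty)$: for each fixed $n$ you may freely replace $\theta^n$ by $Z^n/\|Z^n\|$ inside a single marginal probability, reducing to showing that $\PP(|\tfrac{1}{n}\sum_j|Z_j|^p-\E|Z|^p|>\delta)$ and $\PP(|\|Z^n\|/\sqrt n-1|>\delta)$ are summable in $n$, which a fourth-moment Chebyshev bound gives at rate $O(n^{-2}\delta^{-4})$ with no truncation of $\max_i|\theta^n_i|$ and no concentration of measure at all. With that step carried out, your reduction via Lemma~\ref{wass-conv} (equivalently, Villani's characterization of $\mathcal{W}_p$-convergence as weak convergence plus $p$-th moment convergence) closes the argument.
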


We now establish a  central limit theorem refinement of Lemma \ref{slln}.
Given 
an i.i.d.\ array
$(Z^n = (Z_j^n, j = 1 \ldots, n))_{n \in \N}$ of standard normal random variables, for any twice continuously differentiable function $\phi$, define 
\begin{equation}
  \label{def-sn}
  \hat{s}_n(\phi)  :=\sum_{j=1}^n\frac{\phi''(Z^n_j)}{2}\left( \frac{\sqrt{n}Z^n_j}{\norm{Z^n}_{n,2}}-Z^n_j\right)^2,
  \end{equation}
and set 
\begin{align}
  \label{def-rn}
  \hat{r}_n (\phi) & := \frac{1}{\sqrt{n}}\sum_{j=1}^n \left[\phi (Z^n_j) 
- \int_{\R} \phi(x) \normal(dx)  
+ \phi'(Z^n_j)\left( \frac{\sqrt{n}Z^n_j}{\norm{Z^n}_{n,2}}-Z^n_j\right)\right]. 
  \end{align}
  For any probability measure $\pi\in\mathcal{P}(\R)$, define $\pi(F):=\int_{\R}F(x)\pi(dx)$, for any Borel measurable function $F:\R\to\R$.

\begin{lemma} \label{clt_expansion}
  Given a thrice continuously differentiable function $F:\R \to \R$ and two twice continuously differentiable functions $G_1,G_2:\R\to\R$   such that  $F'''$, $G_1''$ and $G_2''$ have polynomial growth in the sense of Definition \ref{def-polygrowth}, we have the following expansion:
\begin{align*}
&  \sqrt{n} \left( L^n_\Theta (F) -  \normal (F),
L^n_\Theta (G_1) -\normal (G_1),L^n_\Theta (G_2) - \normal (G_2) \right) \\
&\qquad \buildrel (d) \over = \left(\hat{r}_n(F)+\frac{1}{\sqrt{n}}\hat{s}_n(F)+o\left(\frac{1}{\sqrt{n}}\right),  \hat{r}_n(G_1) +o(1),\hat{r}_n(G_2) +o(1)\right), 
\end{align*}
where $\hat{s}_n$ and $\hat{r}_n$ are as defined in \eqref{def-sn} and \eqref{def-rn},
and  as $n \rightarrow \infty$, 
\begin{align*}
&(\hat{r}_n(F),\hat{s}_n(F),\hat{r}_n(G_1),\hat{r}_n(G_2))\\ 
&\qquad\Rightarrow \left(\widetilde{\mathfrak{A}}-\frac{1}{2}\mathbb{E}[F'(Z)Z]\widetilde{\mathfrak{D}},\frac{1}{8}\mathbb{E}[F''(Z)Z^2]\widetilde{\mathfrak{D}}^2,\widetilde{\mathfrak{E}}-\frac{1}{2}\mathbb{E}\left[G_1'(Z)Z \right]\widetilde{\mathfrak{D}},\widetilde{\mathfrak{G}}-\frac{1}{2}\mathbb{E}\left[G_2'(Z)Z \right]\widetilde{\mathfrak{D}}\right)
\end{align*}
where $(\widetilde{\mathfrak{A}},\widetilde{\mathfrak{D}},\widetilde{\mathfrak{E}},\widetilde{\mathfrak{G}})$ is jointly Gaussian with mean $0$ and covariance matrix
\begin{align*}
\left(
\begin{array}{llll}
\mathrm{Cov}(F(Z),F(Z)) & \mathrm{Cov}(F(Z),Z^2) & \mathrm{Cov}(F(Z),G_1(Z)) &\mathrm{Cov}(F(Z),G_2(Z)\\
\mathrm{Cov}(Z^2,F(Z)) & \mathrm{Cov}(Z^2,Z^2) & \mathrm{Cov}(Z^2,G_1(Z)) & \mathrm{Cov}(Z^2,G_2(Z))\\
\mathrm{Cov}(G_1(Z),F(Z)) &\mathrm{Cov}(G_1(Z),Z^2) & \mathrm{Cov}(G_1(Z),G_1(Z)) & \mathrm{Cov}(G_1(Z),G_2(Z))\\
\mathrm{Cov}(G_2(Z),F(Z)) &\mathrm{Cov}(G_2(Z),Z^2) & \mathrm{Cov}(G_2(Z),G_1(Z)) & \mathrm{Cov}(G_2(Z),G_2(Z))
\end{array}
\right),
\end{align*}
and $Z$ is a standard normal random variable. 
\end{lemma}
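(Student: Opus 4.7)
The plan is to exploit the well-known representation $\Theta^n\stackrel{(d)}{=} Z^n/\norm{Z^n}_{n,2}$ under $\smeas$, where $Z^n=(Z^n_1,\ldots,Z^n_n)$ is a vector of i.i.d.\ standard normals (this distributional identity is all that is needed since the claim concerns marginals at each $n$). Set $U_n:=\norm{Z^n}_{n,2}^2/n$ and $\delta_j:=\sqrt n\,Z^n_j/\norm{Z^n}_{n,2}-Z^n_j=Z^n_j(1/\sqrt{U_n}-1)$, so that $L^n_\Theta(H)\stackrel{(d)}{=}\tfrac{1}{n}\sum_{j=1}^n H(Z^n_j+\delta_j)$ for any Borel $H$. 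Since $\sqrt n(U_n-1)=\tfrac{1}{\sqrt n}\sum_{j=1}^n[(Z^n_j)^2-1]\Rightarrow\widetilde{\mathfrak{D}}\sim N(0,2)$ by the classical CLT, one has uniformly $\delta_j=Z^n_j\cdot O_P(1/\sqrt n)$. I will Taylor expand $H(Z^n_j+\delta_j)$ around $Z^n_j$ to third order for $H=F$ and to second order (in Lagrange form) for $H=G_1,G_2$, sum over $j$, and multiply by $1/\sqrt n$.

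Collecting terms in the expansion of $\sqrt n(L^n_\Theta(H)-\normal(H))=\tfrac{1}{\sqrt n}\sum_j[H(Z^n_j+\delta_j)-\normal(H)]$, the zeroth plus first-order contribution recovers exactly $\hat r_n(H)$ as defined in~\eqref{def-rn}, and the pure quadratic contribution for $F$ equals $\hat s_n(F)/\sqrt n$ in view of~\eqref{def-sn}. The remainders are controlled by the polynomial growth hypotheses: on the high-probability event $\{U_n\in(1/2,2)\}$ the intermediate points $\zeta_j,\eta_j$ in the Lagrange remainders satisfy $|\zeta_j|,|\eta_j|\le\sqrt 2\,|Z^n_j|$, so for some $m\in\N$ and $C<\infty$ one has $|F'''(\zeta_j)|\,|\delta_j|^3\le C(1+|Z^n_j|^{m+3})n^{-3/2}$, which summed and divided by $\sqrt n$ yields $O_P(1/n)=o_P(1/\sqrt n)$ by the law of large numbers applied to $\tfrac{1}{n}\sum_j(1+|Z^n_j|^{m+3})$. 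The analogous estimate $\tfrac{1}{2\sqrt n}\sum_j|G_i''(\eta_j)|\delta_j^2=O_P(1/\sqrt n)=o_P(1)$ handles the $G_i$ remainder, and the complementary event $\{U_n\notin(1/2,2)\}$ is negligible by Gaussian concentration. This delivers the claimed distributional equality.

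For the limit law I apply the multivariate CLT to the vector of i.i.d.\ sums
\[
\Bigl(\tfrac{1}{\sqrt n}\sum_{j=1}^n[F(Z^n_j)-\normal(F)],\; \sqrt n(U_n-1),\; \tfrac{1}{\sqrt n}\sum_{j=1}^n[G_1(Z^n_j)-\normal(G_1)],\; \tfrac{1}{\sqrt n}\sum_{j=1}^n[G_2(Z^n_j)-\normal(G_2)]\Bigr),
\]
which converges jointly to $(\widetilde{\mathfrak A},\widetilde{\mathfrak D},\widetilde{\mathfrak E},\widetilde{\mathfrak G})$ with the stated covariance (finiteness of second moments follows from the polynomial growth assumptions). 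Combining the expansion $1/\sqrt{U_n}-1=-\tfrac{1}{2}(U_n-1)+O_P((U_n-1)^2)$ with the laws of large numbers $\tfrac{1}{n}\sum_j H'(Z^n_j)Z^n_j\to\E[H'(Z)Z]$ and $\tfrac{1}{n}\sum_j F''(Z^n_j)(Z^n_j)^2\to\E[F''(Z)Z^2]$, followed by Slutsky and the continuous mapping theorem, converts $\hat r_n(F)$ into $\widetilde{\mathfrak A}-\tfrac{1}{2}\E[F'(Z)Z]\widetilde{\mathfrak D}$ (with the analogous identities for $G_1,G_2$) and $\hat s_n(F)=\tfrac{1}{2}(1/\sqrt{U_n}-1)^2\sum_j F''(Z^n_j)(Z^n_j)^2$ into $\tfrac{1}{8}\E[F''(Z)Z^2]\widetilde{\mathfrak D}^2$, yielding the claimed joint convergence. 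The principal obstacle is the remainder control for $F$ at the finer order $o_P(1/\sqrt n)$ rather than the easier $o_P(1)$ sufficient for $G_i$; this is precisely where the thrice-differentiability of $F$ and the polynomial growth of $F'''$ are essential, and where standard second-order arguments would fall short.
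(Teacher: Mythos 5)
Your proof is correct and follows essentially the same route as the paper: the Gaussian representation $\Theta^n \stackrel{(d)}{=} Z^n/\norm{Z^n}$, Taylor expansion of $H(\sqrt{n}\Theta^n_j)$ around $Z^n_j$ (third order for $F$, second for $G_i$), polynomial-growth and Gaussian-concentration bounds showing the Lagrange remainders are $o_P(1/\sqrt n)$ and $o_P(1)$, respectively, and a multivariate CLT combined with laws of large numbers to identify the joint limit. The only cosmetic difference is that the paper passes through the Skorokhod representation theorem to obtain almost-sure convergence before Taylor-expanding the auxiliary maps $H_1,H_2$, whereas you handle the same step directly with Slutsky and the continuous mapping theorem in $o_P/O_P$ notation.
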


This result is similar in spirit to~\cite[Theorem 1.1]{Kabluchko17}, which establishes a central limit theorem
for the sequence of $q$-norms of $\sqrt{n}\Theta^n$, $n \in \N$.   
 Lemma \ref{clt_expansion} above provides fluctuation  estimates
for suitable joint functionals of $\sqrt{n}\Theta^n$, for which
we first apply a Taylor expansion to the functionals.
The proof of Lemma  \ref{clt_expansion} is deferred to Appendix~\ref{subs-clt}.

\section{Proof of the sharp large deviation estimate for spheres}
\label{sec-pfmain}

Throughout this section, fix $p \in (1,\infty)$ and  for $n \in \N$, 
 recall from Section \ref{subs-reform} the definition of the two-dimensional
random vector $\Sn :=\Snp= \frac{1}{n} \sum_{j=1}^n (\sqrt{n} \Theta_j^n Y_j, |Y_j|^p)$,
where $(Y_j)_{j \in \N}$ is an i.i.d. sequence of random variables with common density $f_p$ as in
\eqref{pNormal}, 
and for $\theta \in \SS$, let  $\bar{h}^n_\theta$ denote the (joint) density of $\Sn$ under $\PP_{\theta}$,  where in this section we will typically
 suppress the dependence of $\bar{h}^n_\theta$, $\Sn$ and $Y_j$ and other quantities on $p$.
In view of  
\eqref{eq-reform}, 
we then have 
\begin{equation} 
\label{integral}
\mathbb{P}_\theta\left(\Wp>a\right)   = \mathbb{P}_\theta \left(\Snp\in \bDpa\right)  = \int_{\bDa} \bar{h}^n_\theta (x,y) dx dy, 
\end{equation}
where  $\bDa = \bDpa$  is the domain defined in \eqref{def-doma}.

\begin{remark}
  \label{rem-notcon}
  Note that $\bar{h}_\theta^n$ depends on $\theta$ only through $\theta^n$.  For notational
  simplicity throughout we will
  adopt the  convention that  for quantities that depend on both $n$ and $\theta^n$,
  we will use a superscript $n$ to denote the former dependence and a subscript $\theta$ instead of
$\theta^n$  to denote the dependence on $\theta^n$.  
\end{remark}

The key ingredients required to estimate the tail probability in \eqref{integral} are
an asymptotic expansion for the joint density
$\bar{h}^n_\theta$ carried out in Proposition \ref{prop-mainest} of
Section \ref{subs-densest}, a multi-dimensional generalized Laplace approximation stated in Proposition \ref{lem-Laplace} of Section \ref{subs-gen-laplace}, and a certain estimate that justifies the application of this Laplace approximation that is stated in 
Lemma \ref{lemma-estimate} of Section \ref{pf-estimate}.   
The proof of Proposition \ref{prop-mainest} is somewhat involved and 
hence deferred to Section \ref{sec-pfdens}.  Instead, these results are first 
used in Sections \ref{subs-mainproof} and \ref{subs-mainproof2} to prove Theorem \ref{main_lp}.  
We first state a preliminary result in Section \ref{subs-prep2}.

 \subsection{Estimates on the joint logarithmic moment generating function}
 \label{subs-prep2}

 We obtain an estimate on the  growth  of the log moment generating function $\Lambda_p$ of $(Y_j,\abs{Y_j}^p)$
 defined in~\eqref{logmgf_lp}, which will be useful in the subsequent discussion.  
The following expression was established in~\cite[Lemma 5.7]{GanKimRam17}:  
\begin{equation}
  \label{lambda-est}
\Lambda_p(t_1,t_2) = -\frac{1}{p}\log(1-pt_2)+\log M_{\gamma_p}\left(\frac{t_1}{(1-pt_2)^{1/p})} \right),
\end{equation}
for
\begin{equation}\label{dom-Lambda}
(t_1,t_2)\in \mathbb{D}_p:=\left \{(t_1,t_2)\in\mathbb{R}^2:t_2<1/p \right \},
\end{equation}
 where
\begin{equation} \label{mg\dens_p}
M_{\gamma_p}(t):=\mathbb{E}\left[e^{tY_j} \right],\quad t \in \mathbb{R},
\end{equation}
is the moment generating function of $Y_j$.
In order to understand the growth in $t_1$ of the derivatives of $\Lambda_p$, it suffices to understand the derivatives of $\log M_{\gamma_p}$.
\begin{lemma} \label{growth_lmgf}
For $1<p<\infty$,  let $M_{\gamma_p}$ and $\Lambda_p$ be as defined in~\eqref{mg\dens_p} and~\eqref{logmgf_lp}, respectively.  Then for every $k\in\mathbb{N}\cup\{0\}$, the derivative
\[
t\mapsto\frac{d^k}{dt^k} \log M_{\gamma_p}(t),
\]
exists and
has at most polynomial growth, in the sense of Definition~\ref{def-polygrowth}. Therefore, for $j,k\in\mathbb{N}\cup\{0\}$, and any $t_2<1/p$, the function
\[
t_1\mapsto\partial_1^j\partial_2^k\Lambda_p(t_1,t_2)
\]
has at most polynomial growth.
\end{lemma}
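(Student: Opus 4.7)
The plan is to analyze $M_{\gamma_p}$ asymptotically via Laplace's method and then transfer the growth information to $\log M_{\gamma_p}$ and its derivatives, before using the explicit formula \eqref{lambda-est} to deduce the bound on $\Lambda_p$. For $p>1$, the density $f_p$ in \eqref{pNormal} decays super-exponentially in $|y|$, so for each fixed $t\in\mathbb{R}$ the integrand $y^k e^{ty}f_p(y)$ is dominated locally in $t$ by an integrable function. Dominated convergence then shows that $M_{\gamma_p}$ is $C^\infty$ with $M_{\gamma_p}^{(k)}(t)=\mathbb{E}[Y_j^k e^{tY_j}]$. I then apply Laplace's method to $\int_{\mathbb{R}} e^{ty-|y|^p/p}\,dy$: the maximizer of $\phi(y):=ty-y^p/p$ on $(0,\infty)$ is $y^*(t):=t^{1/(p-1)}$, at which $\phi(y^*(t))=\tfrac{p-1}{p}t^{p/(p-1)}$ and $\phi''(y^*(t))=-(p-1)t^{(p-2)/(p-1)}$. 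Using evenness of $f_p$ to handle the symmetric case $t<0$, a standard Laplace expansion yields
\begin{equation*}
M_{\gamma_p}(t) = c\,|t|^{-(p-2)/(2(p-1))}\exp\!\left(\tfrac{p-1}{p}|t|^{p/(p-1)}\right)(1+o(1))\qquad\text{as }|t|\to\infty,
\end{equation*}
and hence $\log M_{\gamma_p}(t)=\tfrac{p-1}{p}|t|^{p/(p-1)}+O(\log|t|)$, which has polynomial growth.

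For $k\ge 1$, I represent $(\log M_{\gamma_p})^{(k)}(t)=\kappa_k(t)$ as the $k$-th cumulant of $Y_j$ under the tilted law $\mu_t(dy)\propto e^{ty}f_p(y)\,dy$. Since each cumulant is a universal polynomial in the first $k$ moments $m_\ell(t):=\mathbb{E}_{\mu_t}[Y_j^\ell]$, $\ell\le k$, it suffices to show that each $m_\ell$ grows polynomially. After the substitution $y=y^*(t)+\sigma(t)v$ with $\sigma(t):=|t|^{-(p-2)/(2(p-1))}$, a Laplace/Watson-type analysis shows that under $\mu_t$ the standardized variable $V:=(Y_j-y^*(t))/\sigma(t)$ has all moments bounded uniformly in large $|t|$. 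Expanding $Y_j^\ell=(y^*(t)+\sigma(t)V)^\ell$ binomially then yields $m_\ell(t)=O(|y^*(t)|^\ell)=O(|t|^{\ell/(p-1)})$, proving polynomial growth of $\kappa_k(t)$. The main obstacle is making the Laplace expansion uniform enough to control \emph{all} moments of $\mu_t$, and not merely their leading Gaussian approximation; a clean way is to note that $y\mapsto|y|^p/p-ty$ is uniformly convex near $y^*(t)$ with modulus $\asymp\sigma(t)^{-2}$, from which a Bernstein-type estimate gives sub-Gaussian concentration of $\mu_t$ and hence uniform moment bounds on $V$.

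The second assertion follows from \eqref{lambda-est}. Setting $u(t_1,t_2):=t_1(1-pt_2)^{-1/p}$, by Fa\`a di Bruno's formula, $\partial_1^j\partial_2^k\Lambda_p(t_1,t_2)$ is a finite sum of products of the form $(1-pt_2)^{-\alpha}\,t_1^\beta\,(\log M_{\gamma_p})^{(\ell)}(u(t_1,t_2))$, for nonnegative integers $\alpha,\beta$ and $\ell\le j+k$, together with a harmless contribution from differentiating the term $-\tfrac{1}{p}\log(1-pt_2)$. By the first part, $(\log M_{\gamma_p})^{(\ell)}(u)$ has polynomial growth in $u$; since $u$ depends linearly on $t_1$ for fixed $t_2\in(-\infty,1/p)$ and the prefactor $t_1^\beta$ is itself polynomial, the whole expression is polynomial in $t_1$, which is the desired conclusion.
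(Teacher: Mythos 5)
Your proposal reaches the correct conclusion but via a genuinely different route from the paper's. The paper works directly from the explicit moment formula for the generalized $p$-Gaussian (its Appendix~\ref{pf_growth}, using~\eqref{moments}), observes that $\frac{d^k}{dt^k}\log M_{\gamma_p}(t)$ is a polynomial combination of the moment ratios $\Emb[Y^m e^{tY}]/\Emb[e^{tY}]$, and then bounds each moment ratio by termwise comparison of the numerator and a shifted copy of the denominator power series, the key ingredient being ratio inequalities for $\Gamma$ (Wendel's inequality); it handles $p>2$ and $1<p<2$ by two separate combinatorial arguments. You instead compute the exact leading-order asymptotics of $M_{\gamma_p}$ via Laplace's method and then control all tilted moments via a concentration estimate for the tilted measure $\mu_t$. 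The two approaches share the cumulant/moment-ratio reduction, but yours trades the paper's case-by-case $\Gamma$-ratio gymnastics for a single uniform saddle-point analysis, and as a bonus produces the sharp order of growth $|t|^{k/(p-1)}$ rather than just ``some polynomial,'' with no split between $p<2$ and $p>2$.

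The one place you should be a bit more careful is the step you yourself flag as the main obstacle: the ``uniform convexity near $y^*(t)$ with modulus $\asymp\sigma(t)^{-2}$.'' This is not literally uniform over the whole half-line, since $g''(y)=(p-1)y^{p-2}$ degenerates as $y\downarrow 0$ when $p>2$ and as $y\to\infty$ when $p<2$. What saves you is that $\sigma(t)/y^*(t)\asymp|t|^{-p/(2(p-1))}\to 0$, so after rescaling $y=y^*(t)+\sigma(t)w$ one has $\tilde g''(w)=(1+\sigma w/y^*)^{p-2}$, which is bounded below by a positive constant uniformly for $|w|\le\delta\,y^*/\sigma$; outside this (polynomially large in $|t|$) window the value of $g(y)-g(y^*(t))$ is already super-polynomially large and that region contributes negligibly to every moment. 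So the outline is sound, but the precise form of the ``Bernstein-type estimate'' should be a split into an inner sub-Gaussian window and an outer exponentially negligible region, not a global uniform convexity claim. With that adjustment the argument is correct.
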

The proof of Lemma~\ref{growth_lmgf} involves  conceptually straightforward (though detailed) estimates, and is thus
deferred to Appendix~\ref{pf_growth}.

\subsection{An asymptotic expansion for the joint density}
\label{subs-densest}

The main result of this section is Proposition \ref{prop-mainest}, which
provides an asymptotic expansion for  the joint density $\bar{h}^n_\theta$ of the two-dimensional random vector 
  $\Sn$ under $\PP_{\theta}$.
To state the result, for  $n\in\mathbb{N}$,
define  
\begin{align}\label{Znj}
\bV^n_{j}:=(\sqrt{n} \Theta^n_jY_j,\abs{Y_j}^p), \qquad j = 1, \ldots, n. 
\end{align} 
For $t=(t_1,t_2)\in\mathbb{C}^2$, the Laplace transform of $\left(Y_j,\abs{Y_j}^p\right)$ is given by 
\begin{equation}\label{phi_p}
\Phi_p(t_1,t_2):= \mathbb{E}\left[ e^{t_1 Y_j+t_2\abs{Y_j}^p} \right].
\end{equation}
The observation $|e^{t_1 Y_j+t_2\abs{Y_j}^p}| = e^{\operatorname{Re}\{t_1\}Y_j+\operatorname{Re}\{t_2\}\abs{Y_j}^p}$ shows that $\Phi_p$ is finite precisely when $\operatorname{Re}\{t_2\}<1/p$, or equivalently, $(\operatorname{Re}\{t_1\},\operatorname{Re}\{t_2\})$ lies in $\mathbb{D}_p$, the effective domain of $\Lambda_p$ defined in \eqref{dom-Lambda}.
For $t=(t_1,t_2)\in\mathbb{D}_p$
and $\theta \in \SS$,
also define
  \begin{align}\label{psin_p}
\Psi^{n}_{p,\theta}(t) := \frac{1}{n}\sum_{j=1}^n\log\Phi_p(\sqrt{n}\theta^n_jt_1,t_2) = \int_\mathbb{R}\log\Phi_p(ut_1,t_2)\empn(du),
\end{align}
  where $\empn$ is the empirical measure of the coordinates of
  $\sqrt{n} \theta^n$, as  defined in~\eqref{emp_L}.

\begin{remark}
  \label{psipn}
  Since $\log\Phi_p = \Lambda_p$ on $\mathbb{D}_p$, for $(t_1,t_2)\in\mathbb{D}_p$, $\R\ni u\mapsto\log\Phi_p(ut_1,t_2)$ is continuous and has polynomial growth by Lemma~\ref{growth_lmgf}. Hence, for every $t=(t_1,t_2)\in \mathbb{D}_p$ and $\sigma$-a.e.\ $\theta$, the convergence of $L^n_\theta$ to $\gamma_2$ established in Lemma \ref{slln} shows that as $n\to\infty$,
  \[
  \Psi^{n}_{p,\theta}(t)\to\int_\R\log\Phi_p(ut_1,t_2)\gamma_2(du)=\Psi_p(t),
  \]
  where the last equality holds by the definition of $\Psi_p$ given in~\eqref{psi}.
\end{remark}

Next, recall the definition of $\mathbb{J}_p$ from
\eqref{def-Jp} and for  $x \in \mathbb{J}_p$, the definition of $\lambda_x$ from \eqref{def-Jp}-\eqref{gradPsi}.Then
for $\theta \in \SS$, define
\begin{equation}\label{Rn}
R^n_{x}(\theta^n):=\sqrt{n}(\Psi^{n}_{p,\theta}(\lambda_x)-\Psi_p(\lambda_x)), 
\end{equation}
and
\begin{align}
  \label{def-cnhessn}
  c^n_{x}(\theta^n)  :=\sqrt{n}\nabla\left(\Psi^{n}_{p,\theta}(\lambda_x)-\Psi_p(\lambda_x) \right),\qquad \qquad 
\Hess^n_{x}(\theta^n) :=\hess \Psi^{n}_{p,\theta}(\lambda_x),
\end{align}
where we drop the explicit dependence on $p$ from $c^n_{x}$,  $\Hess^n_{x}$  and  $R^n_x$, and note that the right-hand sides above depend on $\theta$ only through $\theta^n$ (see Remark \ref{rem-notcon}).

For $a > 0$, with the same abuse of notation
  used for $\Hess_a$ in Section \ref{subs-analres},  we let $c_a^n$ and $R_a^n$ denote
the functions  $c_{a^*}^n$ and $R_{a^*}^n$, respectively, where $a^* = (a,1)$. 
  We  show in Section \ref{subs-com} that $c^n_{x}(\theta^n)$ and $\Hess^n_{x}(\theta^n)$ are the mean vector and
covariance matrix, respectively, of $\frac{1}{\sqrt{n}}\sum_{j=1}^n (\bV_{j}^n -x)$, with $\bV_j^n$ as in~\eqref{Znj}, 
under a certain quenched tilted measure; see \eqref{cn} and~\eqref{Gammanx}.

\begin{proposition} \label{prop-mainest}
Fix $p \in (1,\infty)$, $n\in\N$,  and recall the
   definitions of $\Psi_p,  \Psi_p^*, {\mathbb J}_p$ and
   $\Psi^n_{p,\theta}$  given in \eqref{psi}, \eqref{eq:Psi}, \eqref{def-Jp}
   and \eqref{psin_p},  respectively, and for  
   $x \in {\mathbb J}_p$,  recall the definitions of
   $\Hess_{x}$,  $c^n_{x}(\cdot)$ and $R^n_x(\cdot)$ from
   ~\eqref{gamma_x},~\eqref{def-cnhessn} and \eqref{Rn}, 
  respectively.  
Then for  $\smeas$-a.e.\ $\theta$,
\begin{align} \label{gn_est}
  \bar{h}^n_\theta(x) = \frac{n}{2\pi}\gn (x)e^{-n\Psi^*_p(x)}(1+o(1)), 
\end{align}
and the expansion in \eqref{gn_est} is uniform on any compact subset of $\mathbb{J}_p$,
where $\gn$ is the infinitely differentiable function defined by
\begin{equation}
  \label{def-gn} 
  \gn(x) :=   (\det\Hess_{x})^{-1/2} e^{\sqrt{n} R^n_x(\theta^n)}e^{\norm{\Hess_{x}^{-1/2}c^n_{x}(\theta^n)}^2}.
\end{equation}

\end{proposition}

Section \ref{sec-pfdens} is devoted to 
establishing Proposition \ref{prop-mainest}, with the
final proof given in  Section \ref{sec-proppf}.

\subsection{A multi-dimensional generalized Laplace approximation}\label{subs-gen-laplace}
The formula \eqref{integral} and the expression for $\bar{h}^n_\theta$ in \eqref{gn_est}-\eqref{def-gn} show that the tail probability can be expressed as a Laplace-type integral over the domain $\bDa$ defined in \eqref{def-doma}. However, to estimate this integral, we cannot directly
apply conventional Laplace approximations such as those in \cite[Chapter 8]{Bleistein86} or \cite[Chapter V]{Wong01} due to the additional dependence of $n$ in $\gn$. 
Instead, in Propositions \ref{asymptotic-exp} and \ref{lem-Laplace}, we first establish a generalization of multi-dimensional Laplace approximations that can accommodate such $n$-dependent terms, which may be of independent interest.

\begin{definition}\label{def-int_form}
Given $m$, $d\in\N$, $\alpha \in (0,1)$ and a bounded domain $D\subset \R^{m+d}$, we say that the sequence $h^n:\R^{m+d}\to\R$, $n\in\N$, admits a $(f,x^*,\alpha,g^n)$-representation if for each $n\in\N$,
\[
h^n(x) = g^n(x)e^{-nf(x)},\quad x\in \R^{m+d},
\]
where 
\begin{enumerate}
\item $f$ is a nonnegative function that is twice continuously differentiable in $D$ and achieves its minimum on $\cl (D)$, the closure of $D$, at a unique point $x^*$,
\item there exists $C\in(0,\infty)$ such that for each $n\in\N$ sufficiently large, $g^n(x)=\exp(r^n(x))$ is continuously differentiable with 
\[
 \abs{r^n(x)}\leq Cn^\alpha\norm{x}_2 \quad \text{for all $x$ in a neighborhood of $x^*$.}
 \]
\end{enumerate}
\end{definition}

We start by establishing a Laplace asymptotics result, which extends the one-dimensional result in \cite[Chapter 9.2]{Olv97}.
\begin{proposition}\label{asymptotic-exp}
Given $m$, $d\in\N$, and a bounded domain $D\subset \R^m\times\R^d_+$ containing the origin. Suppose the sequence $h^n:\R^{m+d}\to\R$, $n\in\N$, admits  a $(f,x^*,\alpha,g^n)$-representation on $D$ with $x^*=(0,0,\ldots,0)$.
Then we have the following asymptotic expansion: 
\begin{align}\label{laplace-asymp}
\int_Dh^n(x)dx=\frac{(2\pi)^{\frac{m}{2}}}{n^{d+\frac{m}{2}}}\frac{g^n(x^*)}{\prod_{i=1}^d\partial_{m+i} f(x^*)\sqrt{\prod_{j=1}^m\abs{\partial^2_{j,j} f(x^*)}}}e^{-nf(x^*)}(1+o(1)).
\end{align}
\end{proposition}
The proof is deferred to Appendix \ref{app-asymptotic}. We now obtain an alternative representation for this integral.  To state the result we need to introduce the definition of
  Weingarten maps. Let $\mathcal{D}$ be a hypersurface in $\R^d$. Denote  the tangent space at a point $x\in\mathcal{D}$ to be $T_x(\mathcal{D})$ and the normal vector field at $x$ to be $N_x$. Then the Weingarten map at $x$ is defined to be the linear map $L_x:T_x(\mathcal{D})\to T_x(\mathcal{D})$ where $L_x(v):=\partial_vN_x$ and $\partial_v$ is the directional derivative in the direction of $v$.
  Also, for a map $L$, let $L^{-1}$ denote its inverse and 
recall that  $\det (A)$ denotes 
the determinant of a matrix $A$. (See also \cite[Section 4]{Andriani97} for more information on Weingarten maps).
\begin{proposition}
  \label{lem-Laplace}
  For $m,n\in \N$, let  $\newdom \subset \R^{m+d}$ be a bounded domain whose  boundary is a differentiable $(d-1)$-dimensional hypersurface.
  Let $h^n:\R^{m+d}\to\R$, $n \in \N$, be a sequence of functions that admits a $(f,x^*,\alpha,g^n)$-representation on $\mathcal{D}$ in the sense of Definition \ref{def-int_form}.
  Then
  \[  \Integ^n :=  \int_{\newdom} h^n(x)dx =   \frac{(2\pi)^{(d-1)/2}\det(L_1^{-1}(L_1-L_2))^{-1/2}}{n^{(d+1)/2}\langle(\hess \newpsi(x^*))^{-1} \nabla\newpsi(x^*),\nabla\newpsi(x^*)\rangle^{1/2}}g^n(x^*)  e^{-n\newpsi(x^*)}(1+o(1)), \]
  where for $i = 1, 2$, $L_i$ is the   Weingarten map at $x^* \in \partial \newdom$
  of the surface $\Curve_i$,  given by
  \[ \Curve_1 := \{y: \newpsi (y) = \newpsi (x^*) \} \qquad \mbox{  and }  \qquad 
  \Curve_2 := \partial \newdom. 
  \]
  \end{proposition}
  \begin{proof}
  The proof will make use of arguments from  \cite{Bleistein86} as well as a result from \cite{Andriani97}.
  Since $\partial \mathcal{D}$ is a differentiable $(d-1)$-dimensional hypersurface, there exists a one-to-one continuously differentiable transformation $\trans :\mathcal{N}\to \tilde{\mathcal{N}}\subset\R\times\R^{d-1}_+$ such that $F$ maps $x^*$ to the origin. Setting $J_\trans(x)\in\R^d\times\R^d$ to be the Jacobian matrix of $\trans$ at $x$, we can write
  \[
  \Integ^n:=\int_{\newdom} g^n(x)e^{-n\newpsi(x)} dx =   \int_{\trans(\newdom)} \abs{\det J_\trans(x)}g^n(\trans^{-1}(x)) e^{-n\newpsi(F^{-1}(x))}dx.
  \]
  By the assumption in Definition \ref{def-int_form} there exist $\alpha\in(0,1)$ and $C\in(0,\infty)$ such that
  $g^n(x)=\exp(r^n(x))$ with $\abs{r^n(x)}\leq Cn^\alpha\norm{x}_2$  on a neighborhood
  of $\newdom$. By the differentiability of $\trans$, we have $\abs{r^n(\trans^{-1}(x))}\leq Cn^\alpha\norm{x}_2$. Hence, Proposition~\ref{asymptotic-exp} with $m$, $d$ $g^n$, $f$ and $\mathcal{D}$ therein replaced with $d-1$, $1$, $ \abs{\det J_\Gamma(x)}g^n( \Gamma^{-1}(x))$, $\newpsi\circ \Gamma^{-1}$ and $\Gamma(\mathcal{D})$, respectively, implies there exists a constant $C'=C'(\Gamma,\mathcal{D},f)\in(0,\infty)$ that does not depend on $g^n$ such that
\begin{align}\label{eq-exp0.5}
  \mathcal{I}^n&=\frac{(2\pi)^{(d-1)/2}C'}{n^{(d+1)/2} }g^n(x^*)e^{-n\newpsi(x^*)}(1+o(1)).
  \end{align}
  
  In order to deduce the constant $C'$, we note that the same formula also holds when $g^n\equiv 1$ and hence it follows that 
  \begin{align}\label{eq-exp2}
\widetilde{\Integ}^n:=\int_{\newdom} e^{-n\newpsi(x)}dx=\frac{(2\pi)^{(d-1)/2}C'}{n^{(d+1)/2} }e^{-n\newpsi(x^*)}(1+o(1)).
\end{align}
  Also note that
     $\widetilde{\Integ}^n$ coincides with the integral in \cite[Equation (8.3.63)]{Bleistein86} when
  $\lambda, n, \phi$ and $g_0$ therein are replaced with $n, d$, $-\newpsi$ and $1$ here.
  By the stated properties of  $\newdom$, there exists a  local chart of a coordinate system
    $\mathcal{G}:\mathcal{N}_x\to\mathcal{U}$ of $\mathcal{N}_x\subset\partial\newdom$ around $x^*$, for some subset $\mathcal{U}\subset\R^{d-1}$.
    Let  $\Jac_*$ be the Jacobian matrix of the transformation $\mathcal{G}$ 
  at $x^*$, and let  $\Jac_*^T$ denote its transpose.   
 Then, under the stated conditions on $\newpsi$ and $\newdom$, the formula
  \cite[Equation (8.3.63)]{Bleistein86} yields the following estimate:
    \begin{align}\label{eq-exp1} 
 \widetilde{\Integ}^n = \frac{(2\pi)^{(d-1)/2}
      \left|{\rm det}(\Jac_*^T \Jac_*)\right|^{1/2}}{n^{(d+1)/2} |\det\hess(\newpsi\circ\mathcal{G}(x^*))|^{1/2} |\nabla \newpsi (x^*)|}e^{-n\newpsi(x^*)}(1+o(1)).
    \end{align}
  Next, to further simplify the expression in the last display, by  \cite[Equations (4.5) and (4.6)]{Andriani97} it follows, after identifying $DG(0)$, $I$ and $A$ therein with $\Jac_*$, $\newpsi$ and $\hess(\newpsi\circ\mathcal{G}(x^*))$, respectively, that
    \begin{align}\label{eq-exp1.5} \frac{ \left|{\rm det}(\Jac_*^T \Jac_*)\right|^{1/2}}{|\det\hess(\newpsi\circ\mathcal{G}(x^*))|^{1/2} |\nabla \newpsi (x^*)|}
  = \frac{\det(L_1^{-1}(L_1-L_2))^{-1/2}}{\langle(\hess \newpsi(x^*))^{-1} \nabla\newpsi(x^*),\nabla\newpsi(x^*)\rangle^{1/2}},
  \end{align}
  with $L_1, L_2$ as in the proposition. (Note that there is an erroneous additional factor of $\sqrt{2\pi n}$ in the denominator of the expression in \cite[Equation (4.6)]{Andriani97}, which we have corrected). Comparing \eqref{eq-exp1}, \eqref{eq-exp1.5} and \eqref{eq-exp2}, we see that 
  \begin{equation}\label{eq-C}
  C'=\frac{\det(L_1^{-1}(L_1-L_2))^{-1/2}}{\langle(\hess \newpsi(x^*))^{-1} \nabla\newpsi(x^*),\nabla\newpsi(x^*)\rangle^{1/2}}.
  \end{equation}
  The proposition then follows on substituting the above expression for $C'$ into \eqref{eq-exp0.5}.
    
\end{proof} 

\subsection{Continuity estimates for terms in the prefactor} \label{pf-estimate}
  In order to apply Proposition \ref{lem-Laplace}  to the expression for $\bar{h}_\theta^n$ given in  \eqref{gn_est}--\eqref{def-gn}, we need to 
verify that $\bar{h}_\theta^n$ satisfies Definition \ref{def-int_form}.   The following lemma will be useful in verifying property (2) of Definition \ref{def-int_form}.
\begin{lemma}\label{lemma-estimate}
Fix $p\in(1,\infty)$.
For every $y\in \mathbb{J}_p$, $\alpha\in(1/2,1)$ and $\varepsilon>0$ with a fixed $ B_{\varepsilon}(y)\subset \mathbb{J}_p$, there exists  $\Xi=\Xi(y,\alpha,\varepsilon)\in(0,\infty)$ such that for $\smeas$-a.e.\ $\theta$, there exists $N=N(y,\alpha,\varepsilon,\theta)\in\N$ such that for $x\in B_{\varepsilon}(y)$ and $n\geq N$, 
 \begin{align}\label{eq-Rb}
\abs{\sqrt{n}R^n_x(\theta^n)-\sqrt{n}R^n_y(\theta^n)}\leq \Xi n^\alpha\norm{x-y}_2,
\end{align}
and
\begin{align}\label{eq-cb}
\abs{\norm{\Hess_{x}^{-1/2}c^n_{x}(\theta^n)}_2^2-\norm{\Hess_{y}^{-1/2}c^n_{y}(\theta^n)}_2^2}\leq \Xi n^{2\alpha-1}\norm{x-y}_2.
\end{align}
\end{lemma}

Before presenting the proof of the lemma, we provide an alternative formulation of \eqref{eq-Rb} and a related result. Fix $\alpha\in(1/2,1)$, $p\in(1,\infty)$, $y\in\mathbb{J}_p$ and $\varepsilon>0$ such that  $B_{\varepsilon}(y)\subset\mathbb{J}_p$.  
From \eqref{logmgf_lp}, \eqref{phi_p} and the fact that $Y_j$ is a $p$-Gaussian random variable, it follows that the equality $\Lambda_p=\log\Phi_p$ holds on the domain $\mathbb{D}_p$ of $\Phi_p$ defined in \eqref{dom-Lambda}. When combined with \eqref{Rn}, \eqref{psin_p} and \eqref{psi}, this shows that
\[
\sqrt{n}R^n_x(\theta^n) = \sum_{j=1}^n\left(\log\Phi_p(\sqrt{n}\theta^n_j\lambda_{x,1},\lambda_{x,2})-\mathbb{E}\left[\log\Phi_p(Z\lambda_{x,1},\lambda_{x,2})\right]\right),
\]
where $Z\sim \gamma_2$ is a standard normal random variable. Since $x\mapsto\lambda_x$ is infinitely differentiable by Remark \ref{rem-smooth}, 
there exists $C'\in(0,\infty)$ such that $\norm{\lambda_x-\lambda_y}\leq C'\norm{x-y}$ for $x\in B_{\varepsilon}(y)$. Therefore, to show \eqref{eq-Rb}, it suffices to show that given any fixed $s=(s_1,s_2)\in \mathbb{D}_p$, for every $\varepsilon'>0$ such that  $B_{\varepsilon'}(s)\subset\mathbb{D}_p$,  there exist $C=C(s,\alpha,\varepsilon')\in(0,\infty)$  and a random integer $N=N(s,\alpha,\varepsilon')$ such that $\mathbb{P}$-almost surely,
\begin{equation}\label{eq-wts5}
\abs{ \sum_{j=1}^n\left(\mathcal{K}_s(\sqrt{n}\Theta^n_jt_1,t_2)-\mathbb{E}\left[\mathcal{K}_s(Zt_1,t_2)\right]\right)}\leq Cn^\alpha\norm{t},\quad \text{for} \quad \norm{t}<\varepsilon' \quad \text{and $n\geq N$},
\end{equation}
where 
\begin{align}\label{eq-K}
\mathcal{K}_s(t_1,t_2):=\log\Phi_p((s_1+t_1),(s_2+t_2))-\log\Phi_p(s_1,s_2),
\end{align}
for 
\begin{align}
(t_1,t_2)\in\mathbb{D}_{p,s}:=\{(u_1,u_2)\in\R^2:u_2<1/p-s_2\}.\label{def-dps}
\end{align}

Let $Z$, $(Z_j)_{j\in\mathbb{N}}$ be independent standard Gaussian random variables on the probability space $(\Omega', \mathcal{F}',\mathbb{P}')$. Then  letting $Z^{(n)}:=(Z_1,\ldots,Z_n)$ we have (e.g. see Section~\ref{subs-reform} or~\cite[Lemma 1]{Schechtman90}),
\begin{equation}\label{eq-eqind}
 \left(\Theta^n_1,\ldots,\Theta^n_n\right) \buildrel (d) \over = \frac{(Z_1,\dots,Z_n)}{\norm{Z^{(n)}}}.
\end{equation}
The following result on Gaussian vectors will be used to prove Lemma \ref{lemma-estimate}.
\begin{lemma}\label{lem-conti-K}
With the notation above, fix $\alpha\in(1/2,1)$, let $\mathbb{D}=\R\times(-\infty,T)$ and let $\mathcal{K}:\mathbb{D}\to \R$ be a twice continuously differentiable function. Suppose for $t_2\in(-\infty,T)$, the mappings $t_1\mapsto\partial_1\mathcal{K}(t_1,t_2)$, $t_1\mapsto\partial_{12}\mathcal{K}(t_1,t_2)$ and $t_1\mapsto\partial_{11}\mathcal{K}(t_1,t_2)$ have polynomial growth in the sense of Definition \ref{def-polygrowth}. Then  for every $\varepsilon>0$ such that $B_\varepsilon(0)\subset \mathbb{D}$, there exist $C=C(s,\alpha,\varepsilon)\in(0,\infty)$ and a random integer $N=N(s,\alpha,\varepsilon)$ on $(\Omega',\mathcal{F}',\mathbb{P}')$ such that $\mathbb{P}'$-almost surely, for 
\begin{equation}\label{eq-wts}
\abs{ \sum_{j=1}^n\left(\mathcal{K}\left(\frac{\sqrt{n}Z_j}{\norm{Z^{(n)}}}t_1,t_2\right)-\mathbb{E}\left[\mathcal{K}(Zt_1,t_2)\right]\right)}\leq Cn^\alpha\norm{t},\quad \text{for $\norm{t}<\varepsilon$ and  $ n\geq N$}.
\end{equation}
\end{lemma}
Deferring the proof of Lemma \ref{lem-conti-K} to Appendix \ref{app-GC-class}. we now use it to prove Lemma \ref{lemma-estimate}.

\begin{remark}\label{rm-equiv}
From \eqref{eq-eqind}, we will use in our proof the following equivalence that a statement about $Z^{(n)}/\|Z^{(n)}\|$ holds $\mathbb{P}'$-almost surely if and only if the same statement with $Z^{(n)}/\|Z^{(n)}\|$ replaced by $\Theta^n$ holds $\mathbb{P}$-almost surely.
\end{remark}
\begin{proof}[Proof of Lemma \ref{lemma-estimate}]
We begin with the proof of \eqref{eq-Rb}.  Since $\Phi_p$ is finite near the origin, $\Phi_p$ is infinitely differentiable on its domain $\mathbb{D}_p$ and hence, for any $s\in\mathbb{D}_p$, the functional $\mathcal{K}_s$ from \eqref{eq-K} is twice continuously differentiable on its domain $\mathbb{D}_{p,s}$ defined in \eqref{def-dps}. 
Since $\log \Phi_p = \Lambda_p$ on $\mathbb{D}_p$, the expression in \eqref{eq-K} and  Lemma \ref{growth_lmgf} imply that
 the mappings $t_1\mapsto\partial_1\mathcal{K}_s(t_1,t_2)$, $t_1\mapsto\partial_{12}\mathcal{K}_s(t_1,t_2)$ and $t_1\mapsto\partial_{11}\mathcal{K}_s(t_1,t_2)$ have polynomial growth for $t_2<1/p-s_2$. Therefore, Lemma \ref{lem-conti-K} implies that for each $s\in\mathbb{D}_p$ and $\alpha\in(1/2,1)$, for $\varepsilon>0$ with  $B_\varepsilon(0)\subset \mathbb{D}$, there exist $C=C(s,\alpha,\varepsilon)\in(0,\infty)$ and a random integer $N=N(s,\alpha,\varepsilon)$ such that \eqref{eq-wts} holds $\mathbb{P}'$-almost surely.
 Due to the relation $\sigma_n = \mathbb{P} \circ (\Theta^{(n)})^{-1}$, 
  Remark \ref{rm-equiv} implies that\eqref{eq-wts5} holds and thus, that \eqref{eq-Rb} also holds.

We now turn to the proof of  \eqref{eq-cb}. Fix $s\in\mathbb{D}_p$. For $i=1,2$, and $(t_1,t_2)\in\mathbb{D}_{p,s}$, define
\begin{align}\label{eq-KK1}
\tilde{\mathcal{K}}_{i}(t_1,t_2):=\partial_i\log\Phi_p(t_1,t_2),
\end{align}
and
\begin{align}\label{eq-KK}
\mathcal{K}_{s,i}(t_1,t_2):=\tilde{\mathcal{K}}_{i}((s_1+t_1),(s_2+t_2))-\tilde{\mathcal{K}}_{i}(s_1,s_2).
\end{align}
Note that for $i=1,2$, by the smoothness of $\Phi_p$, $\mathcal{K}_{s,i}$ is twice continuously differentiable on its domain $\mathbb{D}_{p,s}$ and also the mappings $t_1\mapsto\partial_1\mathcal{K}_{s,i}(t_1,t_2)$, $t_1\mapsto\partial_{12}\mathcal{K}_{s,i}(t_1,t_2)$ and $t_1\mapsto\partial_{11}\mathcal{K}_{s,i}(t_1,t_2)$ have polynomial growth for $t_2\in\mathbb{D}_{p,s}$ by Lemma \ref{growth_lmgf} and the fact that $\log \Phi_p = \Lambda_p$ on $\mathbb{D}_p$ . Thus,  for $i=1,2$,
$\mathcal{K}_{s,i}$ satisfies the assumption in Lemma \ref{lem-conti-K}.
Hence, \eqref{def-cnhessn}, \eqref{psin_p}, \eqref{eq-eqind}, Lemma \ref{lem-conti-K} and the equivalence between statements about $Z^{(n)}/\|Z^{(n)}\|$ and $\Theta^n$,
 imply that for every $y\in \mathbb{J}_p$, $\alpha\in(1/2,1)$ and $\varepsilon>0$ with $B_{\varepsilon}(y)\subset \mathbb{J}_p$, there exists $\Xi=\Xi(y,\varepsilon,\alpha)\in(0,\infty)$,  and for $\smeas$-a.e.\ $\theta$, there exists $N=N(y,\alpha,\varepsilon,\theta)\in\N$ such that for $x\in B_{\varepsilon}(y)$ and $n\geq N$,  
 \begin{align}\label{eq-cb1}
\abs{\sqrt{n}c^n_{x,i}(\theta^n)-\sqrt{n}c^n_{y,i}(\theta^n)}\leq \Xi n^\alpha\norm{x-y}_2, \quad i=1,2,
\end{align}
where $c^n_{y,i}(\theta^n)$ denotes the $i$-th coordinate of $c^n_x(\theta^n)$. By the smoothness of $x\mapsto \lambda_x$ in Remark \ref{rem-smooth}, there exists  $C'\in(0,\infty)$ such that $\norm{\lambda_x}\leq C'$ for $x\in B_{\varepsilon'}(y)$. Hence, \eqref{def-cnhessn} and Lemma \ref{lem-conti-K}, with $\mathcal{K}$ replaced by $\tilde{\mathcal{K}}_{i}$, imply
that for every $y\in \mathbb{J}_p$ and $\alpha\in(1/2,1)$, there exists $\Xi'=\Xi'(y,\varepsilon,\alpha)\in(0,\infty)$, and for $\smeas$-a.e.\ $\theta$, there exists $N'=N'(y,\alpha,\varepsilon,\theta)\in\N$ such that for $x\in B_{\varepsilon}(y)$ and $n\geq N'$,  
 \begin{align}\label{eq-cb11}
\abs{\sqrt{n}c^n_{x,i}(\theta^n)}\leq \Xi' n^\alpha\norm{\lambda_x}_2\leq C'\Xi' n^\alpha, \quad i=1,2.
\end{align}

Recall the definition of $\Hess_x$ in \eqref{gamma_x}. The following relation
\begin{align*}
\norm{\Hess_{x}^{-1/2}c^n_{x}(\theta^n)}_2^2=\sum_{i=1}^2\left(\sum_{j=1}^2(\Hess_x^{-1/2})_{ij}c^n_{x,j}(\theta^n)\right)^2
\end{align*}
implies that the left-hand side of \eqref{eq-cb} can be written as
\begin{align}\label{eq-cb2}
\abs{\norm{\Hess_{x}^{-1/2}c^n_{x}(\theta^n)}_2^2-\norm{\Hess_{y}^{-1/2}c^n_{y}(\theta^n)}_2^2}\leq \sum_{i=1}^2A^n_i\tilde{A}^n_i,
\end{align}
where for $i=1,2$,
\begin{align*}
A^n_{i}:=\abs{\sum_{j=1}^2((\Hess_x^{-1/2})_{ij}c^n_{x,j}(\theta^n)-(\Hess_y^{-1/2})_{ij}c^n_{y,j}(\theta^n))},\\
\tilde{A}^n_{i}:=\abs{\sum_{j=1}^2((\Hess_x^{-1/2})_{ij}c^n_{x,j}(\theta^n)+(\Hess_y^{-1/2})_{ij}c^n_{y,j}(\theta^n))}.
\end{align*}

By the smoothness of $\Psi_p$ and $\lambda_x$ in Remark \ref{rem-smooth}, $x\mapsto\Hess^{-1/2}_x$ is also infinitely differentiable and there exists $C''\in(0,\infty)$ such that $|(\Hess^{-1/2}_x)_{ij}-(\Hess^{-1/2}_y)_{ij}|\leq C''\norm{x-y}$ and $|(\Hess^{-1/2}_x)_{ij}|\leq C''$ for $x\in B_{\varepsilon}(y)$ and $i,j=1,2$. Hence, by \eqref{eq-cb1} and \eqref{eq-cb11}, for $\smeas$-a.e.\ $\theta$, $x\in B_{\varepsilon}(y)$ and $n\geq \max\{N,N'\}$,
\begin{align} 
A^n_i
&\leq \sum_{j=1}^2\abs{((\Hess_x^{-1/2})_{ij}c^n_{x,j}(\theta^n)-(\Hess_y^{-1/2})_{ij}c^n_{y,j}(\theta^n))}\nonumber\\
&\leq \abs{(\Hess_x^{-1/2})_{ij}-(\Hess_y^{-1/2})_{ij}}\sum_{j=1}^2\abs{c^n_{x,j}(\theta^n)}+\abs{(\Hess_x^{-1/2})_{ij}}\sum_{j=1}^2\abs{c^n_{x,j}(\theta^n)-c^n_{y,j}(\theta^n))}\nonumber\\
&\leq 2(C''C'\Xi'+C''\Xi) n^{\alpha-1/2}\norm{x-y}_2.\label{eq-cb3}
\end{align}
Next, by \eqref{eq-cb11}, for $\smeas$-a.e.\ $\theta$, $x\in B_{\varepsilon}(y)$ and $n\geq N'$,
\begin{align}\label{eq-cb4}
\tilde{A}^n_i \leq 2C''C'\Xi'n^{\alpha-1/2}.
\end{align}
Therefore \eqref{eq-cb} follows from \eqref{eq-cb2}-\eqref{eq-cb4}.

\end{proof}

\subsection{Proof of Theorem~\ref{main_lp}\ref{main_lp_1}}
\label{subs-mainproof}

We are now ready to prove the main estimate \eqref{tail_prob}.  
 Fix $p \in (1,\infty)$ and $a > 0$ 
 such that $\mathbb{I}_p(a) < \infty$ and recall the definition of the domain $\bDa = \bDpa$ given in~\eqref{def-doma}.
Since $\mathbb{I}_p(a)$ is convex and symmetric, $\mathbb{I}_p(a)$ is increasing for $a\in\R_+$. Thus, 
 \eqref{rate_func} and  Lemma~\ref{inf_ratefunc} imply that
\begin{align*}
\inf_{x\in \bDa}\Psi_p^*(x) &= \inf_{t> a}\mathbb{I}_p(t)
=\mathbb{I}_p(a)
= \inf_{\tau_1\in\mathbb{R},\tau_2>0:\tau_1\tau_2^{-1/p}=a}\Psi^*_p(\tau_1,\tau_2)
 =\Psi^*_p(a,1).
\end{align*}
Hence, the infimum of $\Psi_p^*$ over the closure $\cldom$ of $\bDa$  
is attained at $\xstar:=(a,1)$.  
Moreover due to \eqref{rate-fn}, the assumption $\mathbb{I}_p(a)<\infty$ implies  $\Psi^*_p(a,1)<\infty$, and hence,
$a^* = (a,1)\in\mathbb{J}_p$, defined in~\eqref{def-Jp}. Further, by \eqref{def-doma}, $\xstar$ is a point on the smooth part of the the boundary 
  $\partial \bDa$ of $\bDa$.
Let $U \subset \R^2_{>} := \{(x,y) \in\R^2: x > 0, y > 0\}$ be an open neighborhood 
of $\xstar$ to be chosen below
and note that the boundary of  $U\cap \bDa$ is also smooth at  $a^*$.
Then, for  $\theta \in \SS$,
we can split the probability of interest from \eqref{integral} into two parts: 
\begin{align}\label{split}
\mathbb{P}_\theta\left(\Sn\in \bDa\right) = \mathbb{P}_\theta\left(\Sn\in \bDa\cap U\right) +\mathbb{P}_\theta\left(\Sn\in \bDa\cap U^c\right).
\end{align}
The proof will proceed in two steps.
  In the key first step, we will
  estimate the first term on the right-hand side of \eqref{split}
  by  integrating the estimate of the density
$\bar{h}^n_\theta$ of $\Sn$ obtained
  in Proposition \ref{prop-mainest} over the domain $\bDa \cap U$, 
  and then analyze the asymptotics of the resulting Laplace type integral, as $n \rightarrow \infty$ using Proposition \ref{prop-mainest}, Proposition \ref{lem-Laplace} and Lemma \ref{lemma-estimate}. 
  The second step will involve  using
  the LDP for $(\Sn)_{n \in \N}$ to show that the second term on the right-hand side
of \eqref{split} is negligible.

\noindent 
{\bf Step 1.}  Using the expressions for $\bar{h}_\theta^n$
    from~\eqref{gn_est}
     and the fact that  the domain $\bDa \cap U\subset \R^2$ is bounded,  we have for $\sigma$-a.e.\ $\theta$, 
\begin{align}
  \mathbb{P}_\theta\left(\Sn\in \bDa\cap U\right) &= 
   \int_{\bDa\cap U}\bar{h}^n_\theta(x)dx = \frac{n}{2 \pi} \Integ^n_\theta(1+o(1)),
  \label{split11}
\end{align}
where 
\begin{align}
  \label{split1-1}
\Integ^n_{\theta} &:= \int_{\bDa\cap U} \gn (x) e^{-n\Psi^*_p(x)} dx,
\end{align}
 where $\gn$ is as defined in \eqref{def-gn}.

To apply Proposition \ref{lem-Laplace}, we first prove the following:

\begin{lemma}\label{lem-gn-rep}
For $\alpha\in(1/2,1)$, the function $ \gn (x) e^{-n\Psi^*_p(x)}$ admits a $(\Psi^*_p,a^*,\alpha,\gn)$-representation on the bounded region ${\bDa\cap U}$.
\end{lemma}

\begin{proof}
To verify property (1) of Definition \ref{def-int_form}, first note that $\Psi^*_p$ is nonnegative since it is a rate function by Theorem \ref{LDP_p}. Next note that by \eqref{psi}, \eqref{lambda-est} and Lemma \ref{growth_lmgf}, $\Psi_p$ is twice
     (in fact infinitely)  differentiable on $\mathbb{D}_p = \R \times \{t_2: t_2 < 1/p\}$. Hence, by the duality of the
     Legendre transform~\cite[Section III.D]{Zia09}, it follows that  $\Psi_p^*$
     is twice differentiable in $\bDa$  and achieves its minimum uniquely
     at $a^*=(a,1) \in \partial(\bDa\cap U)$. Thus, property (1) of Definition \ref{def-int_form} holds.

We next turn to the verification of property (2) of Definition \ref{def-int_form}. From \eqref{def-gn}, it follows that $\gn(x)=\exp(r^n_\theta(x))$, where
     \[
     r^n_\theta(x):=\log \det \Hess_x^{-1/2}+\sqrt{n}R^n_x(\theta^n)+\norm{\Hess_{x}^{-1/2}c^n_{x}(\theta^n)}^2, \quad x\in\bDa\cap U.
     \]
 Lemma \ref{lemma-estimate} and the smoothness of $x\mapsto\log\Hess_x^{-1/2}$, which follows from Remark \ref{rem-smooth} and \eqref{gamma_x}, imply that for a sufficiently small neighborhood of $x^*$, for any $\alpha\in(1/2,1)$, there exist $C\in(1,\infty)$ and a finite random variable $N$ such that for $\sigma$-a.e.\ $\theta$, $r^n_\theta$ satisfies property (2) of Definition \ref{def-int_form} and the claim follows.  
 \end{proof}
 
 Given the claim, Proposition \ref{lem-Laplace} applied with $d=2$, $\mathcal{D}=\bDa\cap U$ and $h^n(x)=\gn (x) e^{-n\Psi^*_p(x)}$ shows that for $\sigma$-a.e.\ $\theta$,
 \begin{align}
\Integ^n_{\theta}= \frac{(2\pi)^{1/2}}{n^{3/2}}\frac{(L^{-1}_{a,1}(L_{a,1}-L_{a,2}))^{-1/2}}{\langle (\hess\Psi_p^*(a^*))^{-1}\nabla\Psi^*_p(\xstar),\nabla\Psi^*_p(\xstar)\rangle^{1/2}}\gn(a^*)e^{-n\Psi^*_p(\xstar)}(1+o(1)),\label{split12}
\end{align}
where $L_{a,1}$ and $L_{a,2}$ are the Weingarten maps  of the curves  ${\mathcal C}_1 : = \{x\in\mathbb{R}^2:\Psi^*_p(x)=\Psi^*_p(a,1)\}$ and ${\mathcal C}_2 := \{x\in\mathbb{R}^2:x_1= ax_2^{1/p}\}$, evaluated at $\xstar = (a,1)$.
To further simplify \eqref{split12}, first note that by the duality
 of the Legendre transform,
 and the definition of $\lambda_{a,j}$ in \eqref{lambdaa}, we have
\begin{align}
  \partial_j\Psi^*_p(\xstar) = \lambda_{a,j}, \quad \text{for}\quad j=1,2,
  \label{duality1}
\end{align}
and
\begin{align}
  (\hess\Psi_p^*(a^*))^{-1} = \hess\Psi_p(\lambda_a)  = \Hess_{a}. 
  \label{duality2}
\end{align}

Hence,
\begin{align}
\Integ^n_{\theta}= \frac{(2\pi)^{1/2}}{n^{3/2}}\frac{(L^{-1}_{a,1}(L_{a,1}-L_{a,2}))^{-1/2}}{\langle \Hess_{a}\lambda_a,\lambda_a\rangle^{1/2}}\gn(a^*)e^{-n\Psi^*_p(\xstar)}(1+o(1)).\label{split122}
\end{align}

 Next, observe that \cite[Example 4.3]{Andriani97} shows that in $\mathbb{R}^2$, the Weingarten map is reduced to  multiplication by the inverse of the radius of the osculating circle, which is equal to the absolute value of the curvature.  
Recall that for a curve in $\R^2$ defined by the equation  $T(x,y)=0$ for a sufficiently smooth map $T:\R^2 \to  \R$,
 the curvature at a point $x^*$ on the curve is given by
the formula  
\[
\frac{T_y^2T_{xx}-2T_xT_yT_{xy}+T_x^2T_{yy}}{(T_x^2+T_y^2)^{3/2}} (x^*).
\]
Thus, to calculate the  curvature of the curve ${\mathcal C}_1$ at $\xstar$, use
the above formula with $T(x,y) = \Psi^*_p(x,y) - \Psi^*_p(a,1)$ and $x^* = \xstar$,  and substitute  the relations 
$\partial_j\Psi^*_p(\xstar) = \lambda_{a,j}$, $j=1,2$, and the definition of $\Hess_a$ mentioned above to 
conclude that
\begin{align}
L_{a,1}&=\frac{\abs{\lambda_{a,2}^2(\Hess_{a}^{-1})_{11}-2\lambda_{a,1}\lambda_{a,2}(\Hess_{a}^{-1})_{12}+\lambda_{a,1}^2(\Hess_{a}^{-1})_{22}}}{(\lambda_{a,1}^2+\lambda_{a,2}^2)^{3/2}}.
\end{align}
On the other hand, the curvature of the graph of a  function $y=\widetilde{T}(x)$ at the point $(x,\widetilde{T}(x))$
for sufficiently smooth $\widetilde{T}: \R \to \R$ is given by 
$\abs{\widetilde{T}''(x)}/(1+(\widetilde{T}')^2(x))^{3/2}.$ 
  Recalling the definition of $\bDa$ from \eqref{def-doma}, we can apply this with $\widetilde{T}(x) = (x/a)^p$ to
  compute the curvature of  ${\mathcal C}_2 = \partial \bDa$ at $\xstar$ as: 
\begin{equation} 
L_{a,2} = \frac{p(p-1)a}{(a^2+p^2)^{3/2}}.
\end{equation}
Substituting these calculations back into the expressions ~\eqref{split11} and~\eqref{split122}, and
recalling the definitions of $\gn$ from \eqref{def-gn}, $C^n_a(\theta^n)$ from \eqref{Cna} and $\den_a$ and $\numer_a$ from \eqref{def-sigmapa} and \eqref{def-kappapa}, we conclude that for $\sigma$-a.e.\ $\theta$,
\begin{align}\label{split1}
\mathbb{P}_\theta\left(\Sn \in \bDa\cap U\right) = \frac{C^n_a(\theta^n)}{ \numer_a \den_{a}\sqrt{2\pi n} }
e^{-n\mathbb{I}_{p}(a)  + \sqrt{n} R^n_{a}(\theta^n)}(1+o(1)).
\end{align}

\noindent
    {\bf Step 2.} We now turn to  the second term in~\eqref{split}.
    Note that by the continuity of $\Psi^*_p$, there exists $\eta >0$ such that
\[
\inf_{y\in \bDa\cap U^c}\Psi^*_p(y)>\Psi^*_p(\xstar)+\eta.
\]
By the refinement in Lemma~\ref{inf_ratefunc} of the (quenched) large deviation principle for
$\Sn$ established in \cite[Proposition 5.3]{GanKimRam17}, $\Psi_p^*$ achieves its unique minimum in $\bDa$ at $a^*=(a,1)$. Thus, for $\sigma$-a.e.\ $\theta$,
\begin{align}\label{split2}
  \limsup_{n\to\infty}\frac{1}{n}\log \mathbb{P}_\theta\left(\Sn
  \in \bDa\cap U^c\right)\leq -\Psi^*_p(\xstar)-\eta, 
\end{align}
which shows that  the term in~\eqref{split2} is negligible with respect to~\eqref{split1}.

When combined, \eqref{Cna}, \eqref{Rn}, \eqref{def-gn}, \eqref{split},  \eqref{split1} and \eqref{split2}
together yield~\eqref{tail_prob}.   This completes the proof of Theorem \ref{main_lp_1}.

\subsection{Proof of Theorem~\ref{main_lp}\ref{main_lp_2}}
\label{subs-mainproof2}
 We start by obtaining expansions for $R^n_{a}(\Theta^n)$ and $c^n_a(\Theta^n)$. 
First, note that
the functions $\ell_a$, $\ell_{a,1}$ and $\ell_{a,2}$ defined in~\eqref{f_pa}  and their derivatives up to second order (for $\ell_{a,1}$ and $\ell_{a,2}$) and third order (for $\ell_a$) are continuous and have at most polynomial growth by Lemma~\ref{growth_lmgf}.
Therefore, setting 
\[
r_n :=\hat{r}_n(\ell_a), \quad s_n:=\hat{s}_n(\ell_a), \quad t_{n,1} :=\hat{r}_n(\ell_{a,1}), \quad t_{n,2:}=\hat{r}_n(\ell_{a,2}),
\]
where $\hat{r}_n$ and $\hat{s}_n$ are defined in \eqref{def-rn} and \eqref{def-sn}, respectively, we can apply \eqref{def-cnhessn}, \eqref{Rn} and Lemma~\ref{clt_expansion} to obtain
\begin{align*}
R^n_{a}(\Theta^n) &= \frac{1}{\sqrt{n}}\sum_{i=1}^n\left(\ell_a(\sqrt{n}\Theta^n_i)-\mathbb{E}[\ell_a(Z)]\right) \\
&\buildrel (d) \over = r_n+\frac{1}{\sqrt{n}}s_n+o\left(\frac{1}{\sqrt{n}}\right), \\
c^n_a(\Theta^n)&=
\frac{1}{\sqrt{n}}\sum_{i=1}^n\left (
\begin{array}{l}
\ell_{a,1}(\sqrt{n}\Theta^n_i)-\mathbb{E}[\ell_{a,1}(Z)] \\
\ell_{a,2}(\sqrt{n}\Theta^n_i)-\mathbb{E}[\ell_{a,2}(Z)]
\end{array}
\right)
\\
&\buildrel (d) \over = \left (
\begin{array}{l}
t_{n,1} \\
t_{n,2}
\end{array}
\right) +o(1). 
\end{align*}
Moreover, Lemma~\ref{clt_expansion} also shows that
we have the convergence
\begin{eqnarray*} 
  && (r_n,s_n,t_{n,1},t_{n,2})\Rightarrow \\
  && \qquad \left(\widetilde{\mathfrak{A}}-\frac{1}{2}\mathbb{E}[\ell'_a(Z)Z]\widetilde{\mathfrak{D}},\frac{1}{8}\mathbb{E}[\ell''_a(Z)Z^2]\widetilde{\mathfrak{D}}^2,\widetilde{\mathfrak{E}}-\frac{1}{2}\mathbb{E}\left[\ell_{a,1}'(Z)Z \right]\widetilde{\mathfrak{D}},\widetilde{\mathfrak{G}}-\frac{1}{2}\mathbb{E}\left[\ell_{a,2}'(Z)Z \right]\widetilde{\mathfrak{D}}\right),
\end{eqnarray*}
where $(\widetilde{\mathfrak{A}},\widetilde{\mathfrak{D}},\widetilde{\mathfrak{E}},\widetilde{\mathfrak{G}})$ is jointly Gaussian with mean $0$ and  covariance matrix~\eqref{cov_p}.

\section{Proof of the sharp large deviation estimate for balls}\label{sec-pfmain-b}

\subsection{Preliminary Notation}

Fix $p \in (1,\infty)$ and $a > 0$
such that $\mathbb{I}_p(a) < \infty$.
The definitions in Section~\ref{subs-reform}, specifically \eqref{eq-reform-b}, 
yield the following 
expression for the tail probability of projections of $\ell_p^n$ balls:
\begin{align}\label{int-rep}
\mathbb{P}_\theta\left(\Wpb>a\right) = \int_{\bDab}\bar{\mathfrak{h}}^n_\theta(x_1,x_2,y)dx_1dx_2dy,
\end{align}
where for $\theta \in \mathbb{S}$, $\bar{\mathfrak{h}}^n_\theta(x_1,x_2,y)$ is the density under $\mathbb{P}_{\theta}$ of
the random vector $\Snpb = (\Snp, \mathscr{U}^{1/n})$  defined in~\eqref{joint_pdf-b}, and $\bDab:=\bDpab \subset \R^3$ is the domain defined in~\eqref{def-doma-b}.
By the independence of $\mathscr{U}$ and $Y^{(n,p)}$,  for   $x \in \R^2$ and $y\in (0,1]$, 
  $\bar{\mathfrak{h}}^n_\theta(x_1,x_2,y)$ is the product
  of $\bar{h}^n_\theta (x_1, x_2)$,  the density  of $\Snp$ under $\mathbb{P}_\theta$ evaluated at $(x_1, x_2)$,
  and the density of  $\mathscr{U}^{1/n}$ at $y$,
  which is equal to  $\frac{n}{y}e^{n\log y}$.
  Hence, by  Proposition~\ref{prop-mainest}, we have the following uniform estimate for 
  $\bar{\mathfrak{h}}^n_\theta$: for $\sigma$ a.e. $\theta$,  
\begin{align} \label{gn_est-b}
  \bar{\mathfrak{h}}^n_\theta(x_1,x_2,y) &= \frac{n^2}{2\pi}\gnt(x_1,x_2,y)  e^{-n  \mfn(x_1,x_2,y)}(1+o(1)), \quad (x_1, x_2) \in\mathbb{R}^2,\quad y\in (0,1], 
\end{align}
where 
\begin{align}\label{eq-gnt}
\gnt(x_1,x_2,y) := \frac{1}{y}\gn  (x_1,  x_2),
\end{align}
with $\gn$ defined in \eqref{def-gn}, and
\begin{equation}
  \label{def-mfn}
  \mfn(x,y) := \Psi_p^* (x) - \log y, \quad x = (x_1, x_2) \in \mathbb{R}^2, y \in (0,1]. 
\end{equation}

    Thus, as in Section \ref{subs-mainproof}, the integral \eqref{int-rep} of interest
    is once again a Laplace-type integral, and so one expects the significant contribution to
    come from  the value of the integrand in   a neighborhood of the point   
    where the minimum of $\mfn$ over $\bDab$ is achieved.
Now, for any $x \in \R^2$, the minimum of $F(x,y)$ over $y \in (0,1]$ is clearly attained when  $y = 1$,
and by Lemma~\ref{inf_ratefunc} the minimum of $F(x,1)$ over the region
$\{x \in \R^2: x_2 > 0, x_1 x_2^{1/p} = a\}$ is attained at $x = (a,1)$.
Together with 
the strict convexity of the function $\Psi_p^*(a,1)$ established in Theorem \ref{LDP_p} and the fact that
its minimum is attained at $0$, this shows that for  $a > 0$, the minimizing point is given by 
\begin{equation}
    \label{mfn-min}
    \mathop{\arg \min}\limits_{(x_1,x_2,y) \in \bDab} \mfn(x_1,x_2, y) =  \mathop{\arg \min}
    \limits_{(x_1,x_2,y):  0 \leq y \leq 1, x_2 \geq 0, x_1  \geq a x_2^{1/p}}  \mfn(x_1,x_2, y) 
    = (a,1,1). 
\end{equation}
   However, in this case,  the boundary of the domain
   $\bDab$ is not smooth at the minimizing point  $(a,1,1)$, and so instead of Proposition \ref{lem-Laplace}, we apply Proposition \ref{asymptotic-exp} to prove Theorem \ref{main_lp-b}.

\subsection{Proof of the sharp quenched estimate for $\ell_p^n$-balls}\label{subsec-proof-b}

We now prove Theorem~\ref{main_lp-b}.

\begin{proof}[Proof of Theorem~\ref{main_lp-b}]
Fix $p \in (1,\infty)$ and $a > 0$
such that $\mathbb{I}_p(a) < \infty$.
For $\theta\in \mathbb{S}$, 
  recall that the density of $\Snb$ can be expressed as in 
 \eqref{int-rep} and \eqref{gn_est-b},
and recall the assertion in \eqref{mfn-min}
that the minimum of the function $\mfn$ in \eqref{def-mfn} on $\bDab$ is attained at 
$(a,1,1)$. 
Thus, for any open neighborhood $\calU$
of $(a,1,1)$ whose closure  does not intersect the plane $y=0$,
we split the probability into two parts. Fix $\theta \in \SS$.  Then
\begin{align}\label{split-b}
\mathbb{P}_\theta\left(\Snb\in \bDab\right) = \mathbb{P}_\theta\left(\Snb\in \bDab\cap \calU\right) +\mathbb{P}_\theta\left(\Snb\in \bDab\cap \calU^c\right).
\end{align}

For the first term in~\eqref{split-b}, we have the following estimate from~\eqref{int-rep} and \eqref{gn_est-b}:
\begin{align}
  \mathbb{P}_\theta\left(\Snb\in \bDab\cap \calU\right)
&= \frac{n^2}{2\pi}\int_{\bDab\cap U} \gnt(x_1, x_2,y) e^{-nF(x_1,x_2,y)} dx_1 dx_2 dy, 
  \label{split1-b}
\end{align}
where $\gnt$ and $F$ are given in \eqref{eq-gnt} and \eqref{def-mfn}.

The bulk of the proof is devoted to the asymptotics of the Laplace type integral in \eqref{split1-b}. 
In order to apply Proposition~\ref{asymptotic-exp}, we first perform a change of variables to transform the domain of integration. Let $\mathfrak{T}:\bDab\to\R^3$ be the mapping that takes $(x_1,x_2,y)$ to $(\mathcal{X},\mathcal{Y},\mathcal{Z})$ such that
\begin{align}\label{change-calxyz}
\mathcal{X}=x_1y-ax_2^{1/p},\quad\mathcal{Y}=1-y,\quad\mathcal{Z}=x_2-1.
\end{align}
Note that the transformation $\mathfrak{T}$ is invertible in a neighborhood of $(a,1,1)$,  the Jacobian of this transformation  at $(a,1,1)$ is $1$, the image of $\bDab$ under this transformation is 
\[
 \widetilde{\mathscr{D}}_{a}:=\left\{ (\mathcal{X},\mathcal{Y},\mathcal{Z})\in\mathbb{R}^3:  0 < \mathcal{Y}\ <  1, \mathcal{Z}>-1,\mathcal{X}>0 \right\}, 
\]
and $\mathfrak{T}$ maps the minimizer $(a,1,1)$ of $\mfn$ to $(0,0,0)$. Hence, under the transformation $\mathfrak{T}$, setting $\tilde{\calU}:=\mathfrak{T}(\calU)$, we rewrite~\eqref{split1-b} as
\begin{align}
  \mathbb{P}_\theta\left(\Snb\in \bDab\cap \calU\right)
&= \frac{n^2}{2\pi}\int_{\widetilde{\mathscr{D}}_{a}\cap \widetilde{\calU}}  \gnt\circ\mathfrak{T}^{-1}(\mathcal{X},\mathcal{Y},\mathcal{Z}) e^{-nF\circ\mathfrak{T}^{-1}(\mathcal{X},\mathcal{Y},\mathcal{Z})} d\mathcal{X}d\mathcal{Y}d\mathcal{Z}.
  \label{split1-b2}
\end{align}

Let $\cnew_{ijk}:=\partial^i_1\partial^j_2\partial^k_3F(a,1,1)$.  Then, from~\eqref{change-calxyz}, we have
\begin{align*}
\frac{\partial F \circ \mathfrak{T}^{-1}}{\partial \mathcal{X}}(0,0,0) &= \left.\cnew_{100}\frac{\partial x_1}{\partial \mathcal{X}}+\cnew_{010}\frac{\partial x_2}{\partial \mathcal{X}}+\cnew_{001}\frac{\partial y}{\partial \mathcal{X}}\right |_{(0,0,0)}\\
&= \left.\cnew_{100}\frac{1}{1-\mathcal{Y}}\right |_{(0,0,0)}\\
&=\cnew_{100};\\
\frac{\partial F \circ \mathfrak{T}^{-1}}{\partial \mathcal{Y}}(0,0,0) &=\left.\cnew_{100}\frac{\mathcal{X}+a(1+\mathcal{Z})^{1/p}}{(1-\mathcal{Y})^2}-\cnew_{001}\right |_{(0,0,0)}\\
&=a\cnew_{100}-\cnew_{001}; \\
\frac{\partial F \circ \mathfrak{T}^{-1}}{\partial \mathcal{Z}}(0,0,0) &= \left.\cnew_{100}\frac{a}{p}\frac{(1+\mathcal{Z})^{(1-p)/p}}{1-\mathcal{Y}}+\cnew_{010}\right |_{(0,0,0)}\\
&=0; \\
 \frac{\partial^2 F  \circ \mathfrak{T}^{-1}}{\partial \mathcal{Z}^2}(0,0,0) &=  \left(\cnew_{200}\frac{a}{p}\frac{(1+\mathcal{Z})^{(1-p)/p}}{1-\mathcal{Y}}+\cnew_{110} \right)\frac{a}{p}\frac{(1+\mathcal{Z})^{(1-p)/p}}{1-\mathcal{Y}}+\cnew_{100}\left(-\frac{a(p-1)}{p^2}\right)\frac{(1+\mathcal{Z})^{(1-2p)/p}}{1-\mathcal{Y}}\\
 &\quad\left.+\cnew_{110}\frac{a}{p}\frac{(1+\mathcal{Z})^{(1-p)/p}}{1-\mathcal{Y}}+\cnew_{020}\right |_{(0,0,0)}\\
 &=\frac{a^2}{p^2}\cnew_{200}+\frac{2a}{p}\cnew_{110}-\frac{a(p-1)}{p^2}\cnew_{100}+\cnew_{020}.
 \end{align*}
Combining~\eqref{def-mfn} with the duality relations \eqref{duality1}-\eqref{duality2},
imply the following identities:  
\begin{align*}
\cnew_{100}& = \partial_{x_1}\Psi^*_p(a^*)=\lambda_{a,1},\\
\cnew_{001}&= -1,\\
\cnew_{110}&=\partial^2_{x_1,x_2}\Psi^*_p(a^*)=(\Hess_a)^{-1}_{12},\\
\cnew_{020}&=\partial^2_{x_2,x_2}\Psi^*_p(a^*)=(\Hess_a)^{-1}_{22},\\
\cnew_{200}&=\partial^2_{x_1,x_1}\Psi^*_p(a^*)=(\Hess_a)^{-1}_{11}.\\
\end{align*}

To apply Proposition \ref{asymptotic-exp}, we first prove the following: \\
\noindent{\bf Claim}: For $\alpha\in(1/2,1)$, the integrand $ \gnt\circ\mathfrak{T}^{-1}(\mathcal{X},\mathcal{Y},\mathcal{Z}) e^{-nF\circ\mathfrak{T}^{-1}(\mathcal{X},\mathcal{Y},\mathcal{Z})}$ in \eqref{split1-b2} admits a $(F\circ\mathfrak{T}^{-1},  (0,0,0), \alpha,  \gnt\circ\mathfrak{T}^{-1})$-representation on the bounded region $\widetilde{\mathscr{D}}_{a}\cap \widetilde{\calU}$.

\noindent {\bf Proof of Claim}:  Let $U$ be  a
neighborhood of $a^* = (a,1)$.  Then  it follows from 
Lemma \ref{lem-gn-rep} that the function $x \mapsto \bar{g}^n_\theta(x)e^{-n \Psi_p^*(x)}$ admits a
$(\Psi_p^*,(a,1), \alpha, \bar{g}_\theta^n)$-representation on  $\bDa \cap U$, and thus
properties (1) and (2) of Definition \ref{def-int_form} hold with $f = \Psi_p^*$,
$g^n = \bar{g}_\theta^n$ and $x^* = (a,1)$.
It is easy to see that this  implies that the corresponding properties also hold for the
representation stated in the claim. 
Indeed,  since 
$F(x,y) = \Psi_p^*(x) - \ln y$ by \eqref{def-mfn}, 
$\bDab = \{(x,y): x \in \bDa, 1 \geq y \geq 0\}$  by \eqref{def-doma} and \eqref{def-doma-b}, and 
$\bar{\calU}$ does not
 intersect the hyperplane $\{y = 0\}$,
this implies that $F$ is twice continuously differentiable on
$\bDab \cap \calU$ with unique minimizer on the closure of $\bDab \cap \calU$ achieved at $(a, 1, 1)$.
On the other hand,   Lemma \ref{lem-gn-rep} and property (2) of Definition \ref{def-int_form}
imply that $\bar{g}_\theta^n$ is continuously differentiable and there exists $C < \infty$ such that 
$\ln \bar{g}_\theta^n(x) \leq C n^\alpha ||x||_2$ for all $x$ in a neighborhood $U'$ of $(a,1)$.
By \eqref{eq-gnt},  it follows that on any neighborhood $\calU^\prime$ of $(a,1,1)$  of the form
$\{ (x,y): x \in U', 1 >  y > \varepsilon\}$ for some $\varepsilon > 0$,
$\gnt$ is twice continuously differentiable and satisfies
$\ln \gnt (x_1,x_2,y) \leq \frac{C}{\varepsilon} n^\alpha ||(x_1,x_2,y)||_2.$
Combining these two properties with the fact that 
$\mathfrak{T}$ is infinitely differentiable,  maps $(a,1,1)$ to $(0,0,0)$ and $\bDab \cap \calU$ to $\widetilde{\mathscr{D}}_{a}\cap \widetilde{\calU}$, and  has an infinitely differentiable inverse on $\widetilde{\mathscr{D}}_{a}\cap \widetilde{\calU}$,
it follows that properties (1) and (2) of Definition \ref{def-int_form} hold with
$f = F\circ\mathfrak{T}^{-1}$,
 $D = \widetilde{\mathscr{D}}_a \cap \widetilde{\calU}$, 
   $g^n = \gnt \circ \mathfrak{T}^{-1}$ and $x^* = (0,0,0)$.
 This proves the claim.  \hfill \qedsymbol

 Note that  $\widetilde{\mathscr{D}}_{a}  \subset \R_+ \times \R^2$.  Thus, 
we apply Proposition~\ref{asymptotic-exp} with $m = 1$ and $d= 2$ 
to the transformed integral~\eqref{split1-b2}  with $g^n$, $f$, $x^*$ and $D$ therein replaced with $\gnt\circ\mathfrak{T}^{-1}$, $F\circ\mathfrak{T}^{-1}$, $(0,0,0)$ and $\widetilde{\mathscr{D}}_{a}$, to obtain 
\[ \mathbb{P}_\theta\left(\Snb\in \bDab\cap U\right) 
= \frac{n^2}{2\pi}\times \frac{\sqrt{2\pi}}{n^{5/2}}
\frac{\gnt\circ\mathfrak{T}^{-1}(0,0,0)e^{-nF\circ\mathfrak{T}^{-1}(0,0,0)}}{\partial_{\mathcal{X}} F\circ\mathfrak{T}^{-1}(0,0,0)\partial_{\mathcal{Y}} F\circ\mathfrak{T}^{-1}(0,0,0) \sqrt{|\partial^2_{\mathcal{Z}} F\circ\mathfrak{T}^{-1}(0,0,0)|}} (1+o(1)). \]
The  expression on the right-hand side can be  simplified further 
using first the relations $\mathfrak{T}^{-1}(0,0,0) = (a,1,1)$, $F(a,1,1) = \Psi_p^*(a^*)$, $\gnt(a,1,1) = \bar{g}_\theta^n(a^*)$, which follow from \eqref{eq-gnt} and \eqref{def-mfn}, 
together with  the expressions for the partial derivatives of $F$ 
 calculated above, to obtain 
\[ 
\mathbb{P}_\theta\left(\Snb\in \bDab\cap U\right) 
= \frac{1}{\sqrt{2\pi n} \tilde{\gamma}_a} \gn(a^*)e^{-n\Psi^*_p(a^*)}(1+o(1)),
\]
where
\[ \tilde{\gamma}_a := \lambda_{a,1} (a \lambda_{a,1} + 1)\sqrt{\left|-\frac{a(p-1)}{p^2}\lambda_{a,1}+\frac{2a}{p}(\Hess_a)^{-1}_{12}+(\Hess_a)^{-1}_{22}+\frac{a^2}{p^2}(\Hess_a)^{-1}_{11}\right|}. 
  \]
Substituting for $\gn$ and $\Psi^*_p$ using the relations  \eqref{def-gn},   \eqref{Cna} and \eqref{rate-fn},   we then obtain 
  \begin{align} \mathbb{P}_\theta\left(\Snb\in \bDab\cap U\right) 
&= \frac{C^n_a(\theta^n)}{\gamma_a\sqrt{2\pi n}}e^{-n\mathbb{I}_{p}(a)  + \sqrt{n} R^n_{a}(\theta^n)}(1+o(1)),\label{split11-b}
\end{align}  
  where $\gamma_a = ({\rm det} {\mathcal H}_a) \tilde{\gamma}_a$, which coincides with the definition given
  in \eqref{def-gammapa}. 

For the second term in~\eqref{split-b}, as in the proof of $\ell_p^n$ spheres in~\eqref{split2},
one can invoke the
quenched large deviation principle for $\Snb$ established in \cite[Proposition 5.3]{GanKimRam17} along with the fact that the rate function has a unique minimum, as proved in Lemma~\ref{inf_ratefunc} to 
show that it is negligible with respect to~\eqref{split11-b}.
When combined  with  \eqref{eq-reform-b}, \eqref{split-b} and \eqref{split11-b}, this  yields~\eqref{tail_prob-b}.
\end{proof}

\section{The joint density estimate}
\label{sec-pfdens}

This section is devoted to the proof of the density estimate
stated in Proposition \ref{prop-mainest}.
As usual,  throughout
fix $p \in (1,\infty)$.   In Section \ref{subs-asymptotics} 
 an identity  for the joint density is established in
terms of an integral. This integral is then  shown in Section \ref{subs-com}
to admit an alternative
representation  as  an expectation 
with respect to a tilted measure.
The latter representation is used in Section \ref{subs-integrand} 
to obtain certain asymptotic estimates.
These results are finally combined in Section \ref{sec-proppf} to
prove Proposition \ref{prop-mainest}.

\subsection{An integral representation for the joint density}
\label{subs-asymptotics}

\begin{lemma}[Representation for the density of $\bar{S}^n$ under $\mathbb{P}_\theta$] \label{g_est}
Fix $n \in \N$ and $\theta \in \SS$, and  recall the definitions of $\Psi_p$, $\mathbb{J}_p$, $\lambda_x$, $\Phi_p$ and $\Psi^{n}_{p,\theta}$ in \eqref{psi}, \eqref{def-Jp}, \eqref{lambda_x}, \eqref{phi_p} and \eqref{psin_p}, respectively,
    and recall    that $\bar{h}^n_\theta$ is the density,  under $\PP_{\theta}$, of $\Sn$ 
    defined in~\eqref{joint_pdf}. Then for all sufficiently large $n$,  and $x\in\mathbb{J}_p$, the following identity holds, 
\begin{align}\label{hn}
  \bar{h}^n_\theta(x) = \left(\frac{n}{2\pi}\right)^2e^{-n\Psi^*_p(x)}e^{n(\Psi^{n}_{p,\theta}(\lambda_x)-\Psi_p(\lambda_x))}
\mathcal{I}^n_\theta(x),
\end{align}
where 
\begin{align}\label{In}
\mathcal{I}^n_\theta(x):=\int_{\mathbb{R}^2} e^{-i\langle t,nx \rangle}\prod_{j=1}^n\frac{\Phi_p(\sqrt{n}\theta^n_j(\lambda_{x,1}+it_1),\lambda_{x,2}+it_2)}{\Phi_p(\sqrt{n}\theta^n_j\lambda_{x,1},\lambda_{x,2})}dt.
\end{align}
Moreover, there exists $s>1$ such that $(t_1,t_2)\mapsto(\prod_{j=1}^n\Phi_p(\sqrt{n}\theta^n_j(\lambda_{x,1}+it_1),\lambda_{x,2}+it_2))^{s/n}$ lies in $\mathbb{L}_1(\R^2)$ for all sufficiently large $n$.
\end{lemma}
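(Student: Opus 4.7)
The plan is to apply the two-dimensional Fourier inversion formula to the density of $\bar{S}^n$ and then perform an exponential tilt via a complex contour shift.

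Observe that under $\mathbb{P}_\theta$ the random vectors $\bar{V}^n_j = (\sqrt{n}\theta^n_j Y_j, |Y_j|^p)$, $j=1,\ldots,n$, are independent with characteristic functions $s\mapsto\Phi_p(i\sqrt{n}\theta^n_j s_1, is_2)$. Hence the characteristic function of $\bar{S}^n = \frac{1}{n}\sum_j \bar{V}^n_j$ factors as $\prod_{j=1}^n \Phi_p(i\theta^n_j s_1/\sqrt{n},\, is_2/n)$. Granting for the moment the integrability asserted at the end of the lemma, Fourier inversion followed by the change of variables $s = nu$ gives
\begin{equation*}
\bar{h}^n_\theta(x) = \frac{n^2}{(2\pi)^2}\int_{\mathbb{R}^2} e^{-in\langle u, x\rangle}\prod_{j=1}^n \Phi_p(i\sqrt{n}\theta^n_j u_1, iu_2)\,du.
\end{equation*}

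Next, I would shift the contour $u \mapsto t - i\lambda_x$. The hypothesis $x \in \mathbb{J}_p$ together with the Legendre duality statement \eqref{gradPsi} places $\lambda_x$ in the strip of analyticity $\mathbb{D}_p = \{(s_1, s_2) \in \mathbb{C}^2 : \operatorname{Re} s_2 < 1/p\}$ of $\Phi_p$. The super-exponential decay of $f_p$ for $p>1$ provides enough decay of $\Phi_p$ along vertical strips to justify closing the Cauchy contour at infinity, so the shift is valid. The algebraic identities $i\sqrt{n}\theta^n_j(t_1 - i\lambda_{x,1}) = \sqrt{n}\theta^n_j(\lambda_{x,1}+it_1)$ and $i(t_2 - i\lambda_{x,2}) = \lambda_{x,2}+it_2$, together with $e^{-in\langle t-i\lambda_x, x\rangle} = e^{-n\langle\lambda_x, x\rangle}e^{-in\langle t, x\rangle}$, yield
\begin{equation*}
\bar{h}^n_\theta(x) = \frac{n^2}{(2\pi)^2}\,e^{-n\langle\lambda_x, x\rangle}\int_{\mathbb{R}^2} e^{-in\langle t, x\rangle}\prod_{j=1}^n\Phi_p(\sqrt{n}\theta^n_j(\lambda_{x,1}+it_1),\,\lambda_{x,2}+it_2)\,dt.
\end{equation*}
Multiplying and dividing the integrand by $\prod_j \Phi_p(\sqrt{n}\theta^n_j \lambda_{x,1}, \lambda_{x,2}) = e^{n\Psi^n_{p,\theta}(\lambda_x)}$ (from the definition \eqref{psin_p}) and invoking the Legendre identity $\langle\lambda_x, x\rangle = \Psi_p(\lambda_x) + \Psi^*_p(x)$ from \eqref{lambda_x} converts the prefactor into $e^{-n\Psi^*_p(x)}e^{n(\Psi^n_{p,\theta}(\lambda_x)-\Psi_p(\lambda_x))}$ and the remaining integral into $\mathcal{I}^n_\theta(x)$, producing \eqref{hn}.

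The main obstacle is the integrability claim, which both legitimizes the Fourier inversion and justifies the contour shift. The quotient $\Phi_p(\sqrt{n}\theta^n_j(\lambda_{x,1}+it_1), \lambda_{x,2}+it_2)/\Phi_p(\sqrt{n}\theta^n_j\lambda_{x,1}, \lambda_{x,2})$ is precisely the characteristic function of $(Y, |Y|^p)$ under the tilted density $\widetilde{f}^n_{p,j}$ from \eqref{tilt_pdf}, so its modulus is bounded by $1$ and tends to $0$ at infinity. For $p>1$, smoothness and super-exponential tails of $f_p$ allow one to extract a polynomial-in-$(t_1,t_2)$ decay rate for this modulus, with the $j$-dependence entering only through the argument $\sqrt{n}\theta^n_j t_1$. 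The constant factor $\prod_j\Phi_p(\sqrt{n}\theta^n_j\lambda_{x,1},\lambda_{x,2})^{s/n} = e^{s\Psi^n_{p,\theta}(\lambda_x)}$ is bounded in $n$ for $\smeas$-a.e.\ $\theta$ by Remark~\ref{psipn} and Lemma~\ref{slln}, while Lemma~\ref{slln} also guarantees that a positive fraction of the scaled coordinates $|\sqrt{n}\theta^n_j|$ remains bounded away from $0$ uniformly in $n$. Combining this with the polynomial decay estimate, any fixed $s>1$ makes $(\prod_j \Phi_p)^{s/n}$ integrable for all sufficiently large $n$; the delicate point is transferring single-coordinate decay into a uniform estimate for the whole product, which is exactly where the empirical measure $L^n_\theta$ enters the argument.
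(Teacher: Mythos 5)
Your contour-shift derivation of the identity \eqref{hn} is equivalent to the paper's manipulation (the paper tilts the density of $\sum_j\bar V^n_j$ by $e^{\langle\lambda_x,\cdot\rangle}$ first, computes the Fourier transform of the tilted density, and inverts; algebraically this is the same calculation), so that portion is sound modulo the integrability needed to justify inversion and to close the contour.

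The genuine gap is the integrability claim. You treat a single factor $\Phi_p(\sqrt n\theta^n_j(\lambda_{x,1}+it_1),\lambda_{x,2}+it_2)/\Phi_p(\sqrt n\theta^n_j\lambda_{x,1},\lambda_{x,2})$ as the characteristic function of the one tilted coordinate $(Y_j,|Y_j|^p)$, assert it "tends to $0$ at infinity," and then claim a polynomial decay rate. This cannot work as stated: $(Y_j,|Y_j|^p)$ is supported on the one-dimensional curve $\{(y,|y|^p):y\in\R\}$, hence singular with respect to Lebesgue measure on $\R^2$, so there is no joint density and the Riemann--Lebesgue lemma does not apply. Any quantitative decay of this singular characteristic function would have to come from a stationary-phase/curvature estimate, and since the curvature of the curve degenerates both at the origin (for $p>2$) and at infinity, uniform polynomial decay is far from obvious and is not established. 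The paper avoids this entirely: it observes that the \emph{sum of two coordinates} $\bar V^n_j+\bar V^n_k$ is the image of the absolutely continuous $f_p\otimes f_p$ under a locally invertible map $(y_1,y_2)\mapsto(\sqrt n(\theta_1y_1+\theta_2y_2),|y_1|^p+|y_2|^p)$, writes out the resulting two-dimensional density $\bar\upsilon$ explicitly via the change-of-variables Jacobian, shows that $z\mapsto e^{\langle\lambda,z\rangle}\bar\upsilon(z)\in\mathbb{L}_{1+r}(\R^2)$ for small $r>0$ by splitting near and away from the singularity of the Jacobian, and then uses Hausdorff--Young plus Hölder against the bounded remaining factors to obtain the $\mathbb{L}_1$ bound for all sufficiently large $n$. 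This two-coordinate density computation is the essential idea of the proof, and your argument does not supply an alternative for it.

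Two secondary issues. First, you appeal to Lemma~\ref{slln} and Remark~\ref{psipn} to get $\smeas$-a.e.\ control on the constant factor and on the fraction of coordinates bounded away from $0$; but the lemma fixes an arbitrary $\theta\in\SS$, not merely $\smeas$-a.e.\ $\theta$, and the paper's claim proof works pointwise in $\theta$ for each fixed $n$, so no almost-sure machinery is available or needed here. Second, you invoke the final integrability assertion (which concerns the product on the \emph{shifted} contour containing $\lambda_x$) to justify Fourier inversion on the \emph{unshifted} contour; these are different statements, and the paper sidesteps the issue by tilting first and never requiring inversion before the tilt.
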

\begin{proof}
Let $\mathbb{D}_p$ be as in \eqref{dom-Lambda}, fix $x\in\mathbb{J}_p$ 
  and omit the subscript $x$ from $\lambda_x\in\mathbb{D}_p\subset\R^2$ and the superscript $p$ from many quantities for notational simplicity. 
Recall the definition of $\bV^n_{j}$ in~\eqref{Znj} and
for $\theta\in\mathbb{S}$,
  let 
$\bln_\theta$ be the density of the sum $\sum_{j=1}^n\bV^n_{j}$ under $\PP_{\theta}$.
The moment generating function of this sum is given by
\begin{align*}
\int_{\R^2}e^{\langle \lambda,y\rangle}\bln_\theta(y)dy&= \mathbb{E}_\theta\left[e^{\langle \lambda, \sum_{j=1}^n\bV^n_j\rangle}\right]\\
&=\prod_{j=1}^n\mathbb{E}_\theta\left [e^{\lambda_1\sqrt{n}\theta^n_jY_j+\lambda_2\abs{Y_j}^p} \right]<\infty,
\end{align*}
where $Y_1,\ldots,Y_n$ are i.i.d. with density $f_p$ defined in~\eqref{pNormal} and the finiteness follows because $\lambda\in\mathbb{D}_p$ and thus $\lambda_2<1/p$.
Then the Fourier transform of
the integrable function $y \mapsto e^{\langle \lambda,y\rangle}\bln_\theta(y)$ is given as follows\footnote{Note that we use the convention for characteristic functions and thus put $i$ in place of $-2\pi i$ in the Fourier transform.} : for  $t\in\mathbb{R}^2$,
\begin{align}
\int_{\mathbb{R}^2} e^{\langle \lambda+it,y\rangle}\bln_\theta(y)dy&=\mathbb{E}_{\theta}\left[e^{\langle\lambda+it,\sum_{j=1}^n\bV^n_{j}\rangle} \right]\nonumber\\
& = \prod_{j=1}^n\mathbb{E}_{\theta}\left[e^{\langle\lambda+it,\bV^n_{j}\rangle} \right]\nonumber\\
& = \prod_{j=1}^n\Phi_p(\sqrt{n}\theta^n_j(\lambda_1+it_1),\lambda_2+it_2).\label{prod_Z}
\end{align}

We now make the following claim:\\
{\bf Claim.}
{\it  There exists $s>1$ such that for any $\lambda\in \mathbb{D}_p$, $t\in\R^2$ and  $j,k\in\{1,\ldots,n\}$, $j\neq k$, we have 
\begin{equation}\label{integrability}
K^{n,j,k}_\theta(\lambda,t):=\int_{\mathbb{R}^2}\abs{\Phi_p(\sqrt{n}\theta^n_k(\lambda_1+it_1),\lambda_2+it_2)\Phi_p(\sqrt{n}\theta^n_j(\lambda_1+it_1),\lambda_2+it_2)}^sdt<\infty.
\end{equation}}

We defer the proof of the claim, first showing how the lemma follows from the claim. 
  Let $s > 1$ be as in the claim. 
  Since the moment generating function $\Phi_p$ is bounded, the claim holds for any $s'>s$. Now,  pick any integer
  $n > 2s$. Then  H\"older's inequality and the claim imply that the right-hand side of~\eqref{prod_Z} lies in $\mathbb{L}_1(\mathbb{R}^2)$.  Hence, the second assertion of the lemma holds for any such  $n$. We may then apply the inverse Fourier transform formula to conclude that, for all sufficiently large $n$,
\begin{align}\label{f_est}
\bln_\theta(x) = \left(\frac{1}{2\pi}\right)^2\int_{\mathbb{R}^2} e^{-\langle \lambda+it,x\rangle}\prod_{j=1}^n\Phi_p(\sqrt{n}\theta^n_j(\lambda_1+it_1),\lambda_2+it_2)dt.
\end{align}

Next, recall that for any $x\in\mathbb{J}_p$, $\lambda = \lambda_x$ is chosen so that~\eqref{lambda_x} is satisfied.
Also,  by~\eqref{joint_pdf} and~\eqref{Znj}, we have 
\[
\Sn = \frac{1}{n}\sum_{j=1}^n\bV^n_{j}. 
\]
Hence, using \eqref{f_est},~\eqref{lambda_x} and~\eqref{psin_p}, we see that the density $\bar{h}^n_\theta$ of $\Sn$
under $\PP_\theta$ is given by 
\begin{align*}
\bar{h}^n_\theta(x)& = n^2\bln_\theta(nx)\\
 &= \left(\frac{n}{2\pi}\right)^2\int_{\mathbb{R}^2} e^{-\langle \lambda+it,nx\rangle}\prod_{j=1}^n\Phi_p(\sqrt{n}\theta^n_j(\lambda_1+it_1),\lambda_2+it_2)dt\\
&= \left(\frac{n}{2\pi}\right)^2e^{-n\Psi^*_p(x)}\int_{\mathbb{R}^2} e^{n(\Psi^*_p(x)-\langle\lambda,x\rangle)}e^{-i\langle t,nx \rangle}\prod_{j=1}^n\Phi_p(\sqrt{n}\theta^n_j(\lambda_1+it_1),\lambda_2+it_2)dt\\
& = \left(\frac{n}{2\pi}\right)^2e^{-n\Psi^*_p(x)}\int_{\mathbb{R}^2} e^{-n\Psi_p(\lambda)}e^{-i\langle t,nx \rangle}\prod_{j=1}^n\Phi_p(\sqrt{n}\theta^n_j(\lambda_1+it_1),\lambda_2+it_2)dt\\
& =\left(\frac{n}{2\pi}\right)^2e^{-n\Psi^*_p(x)}e^{n(\Psi^n_{p,\theta}(\lambda)-\Psi_p(\lambda))}\int_{\mathbb{R}^2} e^{-i\langle t,nx \rangle}\prod_{j=1}^n\frac{\Phi_p(\sqrt{n}\theta^n_j(\lambda_1+it_1),\lambda_2+it_2)}{\Phi_p(\sqrt{n}\theta^n_j\lambda_1,\lambda_2)}dt, 
\end{align*}
for $x\in\mathbb{J}_p$. 
Since the right-hand side above coincides with the expression for $\bar{h}^n_\theta$ given in~\eqref{hn} and~\eqref{In}, this proves the first part of the lemma given the claim. \\

To complete the proof of the lemma, it only remains to prove the claim.

\noindent
{\bf Proof of the claim.}
Fix $n\in\N$, $j,k\in\{1,\ldots,n\}$, $j\neq k$, and 
set $\theta_1:=\theta^n_j$ and $\theta_2:=\theta^n_k$.
Let $\bar{\upsilon} := \bar{\upsilon}^{n,j,k}_\theta$  
  denote the density of $\bV^n_j+\bV^n_k$ under $\PP_\theta$.  
We assert that to prove the claim
 it suffices to show that the function $\R^2\ni z\mapsto e^{\langle \lambda, z\rangle}\bar{\upsilon}(z)$
 lies in $\mathbb{L}_{1+r}(\mathbb{R}^2)$ for some $r \in (0,\infty)$.  Indeed, then 
 by the Hausdorff-Young inequality~\cite[Theorem 8.21]{Folland99}, the Fourier transform
of $z\mapsto e^{\langle \lambda, z\rangle}\bar{\upsilon}(z)$  lies in $\mathbb{L}_{s}$, where $s$ is the  ``conjugate exponent'' of
$1+r$. By \eqref{Znj} and \eqref{phi_p}, this is  equivalent to saying that 
\eqref{integrability} holds with $s = 1+1/r > 0$. 

To this end, we start by obtaining  a convenient expression for $\bar{\upsilon}$. Note from~\eqref{Znj} that $\bV^n_{j}+\bV^n_k=T(Y_j,Y_k)$, where $(Y_j)_{j\in\N}$ are i.i.d. with common density $f_p$ and $T:=T^{n,j,k}:\R^2\to\R\times \R_+$ is the differentiable transformation given by, 
\[
T(y_1,y_2) = (\sqrt{n}(\theta_1y_1+\theta_2y_2),\abs{y_1}^p+\abs{y_2}^p), \quad (y_1,y_2)\in\R^2.
\]
Given $(z_1,z_2)\in\R\times\R_+$, we solve for $(z_1,z_2)=T(y_1,y_2)$. For $z_2>0$, consider the curves $\{y\in\R^2:z_1=\sqrt{n}(\theta_1y_1+\theta_2y_2)\}$ and $\{y\in\R^2:z_2=\abs{y_1}^p+\abs{y_2}^p\}$, which describe a line and an $\ell^2_p$ sphere, respectively. Using Lagrange multipliers, it is straightforward to deduce that, the equation $(z_1,z_2)=T(y_1,y_2)$, which describes the intersection of these two curves, then has two solutions when $\abs{z_1}< z_2^{1/p}\sqrt{n}(\abs{\theta_1}^{p/(p-1)}+\abs{\theta_2}^{p/(p-1)})^{(p-1)/p}=:M(z_2)$, one solution when $|z_1|=M(z_2)$ and no $y\in\R^2$ such that $T(y)=(z_1,z_2)$ and when  $\abs{z_1}> M(z_2)$.

For $\abs{z_1}<M(z_2)$, we define $y^+$ and $y^-$ to be the two solutions to $T(y)=z$. Thus, $T$ is locally invertible on its range and hence, by the change of variables formula and the differentiability of $T$, we may write the density $\bar{\upsilon}$ as
\begin{align*}
\bar{\upsilon}(z_1,z_2) &= \left(f_p(y^+_1)f_p(y^+_2)\abs{\frac{\partial(y^+_1,y^+_2)}{\partial(z_1,z_2)}}+f_p(y^-_1)f_p(y^-_2)\abs{\frac{\partial(y^-_1,y^-_2)}{\partial(z_1,z_2)}}\right)1_{\{z_2>0,\abs{z_1}<M(z_2)\}}\\
&=\left(\abs{\frac{\partial(y^+_1,y^+_2)}{\partial(z_1,z_2)}}+\abs{\frac{\partial(y^-_1,y^-_2)}{\partial(z_1,z_2)}}\right)e^{-z_2/p}1_{\{z_2>0,\{\abs{z_1}<M(z_2)\}}.
\end{align*}
Here, $|\partial(y_1,y_2)/\partial(z_1,z_2)|$ is the Jacobian of the transformation $T$ at $(y_1,y_2)$, which is given by the explicit formula
\begin{align}\label{jacob}
\mathcal{J}_T(y):=\abs{\frac{\partial(y_1,y_2)}{\partial(z_1,z_2)}}=\frac{1}{\sqrt{n}p}\abs{\frac{1}{\theta_2\text{sgn}(y_1)\abs{y_1}^{p-1}-\theta_1\text{sgn}(y_2)\abs{y_2}^{p-1}}},
\end{align}
where $\text{sgn}(\cdot)$ denotes the sign function.

For $r >0$, the above discussion shows that
\begin{align*}
&\int_{\R^2} \abs{e^{\lambda_1z_1+\lambda_2z_2}\bar{\upsilon}(z_1,z_2)}^{1+r}dz_1dz_2 \\
&\qquad =
\int_{\R^2} \abs{e^{\lambda_1z_1+\lambda_2z_2-z_2/p}}^{1+r}\left(\abs{\frac{\partial(y^+_1,y^+_2)}{\partial(z_1,z_2)}}+\abs{\frac{\partial(y^-_1,y^-_2)}{\partial(z_1,z_2)}} \right)^{1+r}1_{\{z_2>0,\abs{z_1}<M(z_2)\}}dz_1dz_2\\
&\qquad \leq 2^{r}\int_{\R^2} \abs{e^{\lambda_1z_1+\lambda_2z_2-z_2/p}}^{1+r}\left(\abs{\frac{\partial(y^+_1,y^+_2)}{\partial(z_1,z_2)}}^{1+r}+\abs{\frac{\partial(y^-_1,y^-_2)}{\partial(z_1,z_2)}}^{1+r} \right)1_{\{z_2>0,\abs{z_1}<M(z_2)\}}dz_1dz_2\\
&\qquad =2^{r}\int_{\R^2} \abs{e^{\lambda_1\sqrt{n}(\theta_1y_1+\theta_2y_2)+\left(\lambda_2-\frac{1}{p}\right)\left(\abs{y_1}^p+\abs{y_2}^p)\right)}}^{1+r}\abs{\frac{\partial(y_1,y_2)}{\partial(z_1,z_2)}}^{r}dy_1dy_2,
\end{align*}
where the inequality follows from $(a+b)^{1+r}\leq 2^r(a^{1+r}+b^{1+r})$ for $a,b\in\R_+$, and the last equality uses the definition of $T$.
Next, let $\mathcal{N}\subset\R^2$ be a neighborhood of the origin. Then
\begin{align*}
&\int_{\R^2} \abs{e^{\lambda_1z_1+\lambda_2z_2}\bar{\upsilon}(z_1,z_2)}^{1+r}dz_1dz_2 \\
&\qquad =
2^r\int_{\R^2\cap \mathcal{N}} \abs{e^{\lambda_1\sqrt{n}(\theta_1y_1+\theta_2y_2)+\left(\lambda_2-\frac{1}{p}\right)\left(\abs{y_1}^p+\abs{y_2}^p)\right)}}^{1+r}\abs{\frac{\partial(y_1,y_2)}{\partial(z_1,z_2)}}^{r}dy_1dy_2\\
&\quad\qquad +
2^r\int_{\R^2\cap \mathcal{N}^c}\abs{e^{\lambda_1\sqrt{n}(\theta_1y_1+\theta_2y_2)+\left(\lambda_2-\frac{1}{p}\right)\left(\abs{y_1}^p+\abs{y_2}^p)\right)}}^{1+r}\abs{\frac{\partial(y_1,y_2)}{\partial(z_1,z_2)}}^{r}dy_1dy_2.
\end{align*}
Since $p \in (1,\infty)$ and $x \in {\mathbb J}_p$ implies $p\lambda_2=p\lambda_{x,2}<1$, it follows that $e^{\lambda_1\sqrt{n}(\theta_1y_1+\theta_2y_2)+\left(\lambda_2-\frac{1}{p}\right)\left(\abs{y_1}^p+\abs{y_2}^p)\right)}$ lies in $\mathbb{L}_r(\R^2)$ for any $r>0$. 
Moreover, since $p>1$, by \eqref{jacob}, there exists $r_1>0$ small enough such that the Jacobian $\mathcal{J}_T$ lies in $\mathbb{L}_{r_1}(\mathcal{N})$. On the other hand, there exists $0<r_2<\infty$ large enough such that the Jacobian $\mathcal{J}_T$ lies in $\mathbb{L}_{r_2}(\mathcal{N}^c)$.
Thus, by H\"older's inequality, there exists $r >0$ such that the last display is finite.  This completes the proof of the claim, and therefore of the lemma.

\end{proof}

\subsection{Representation of the integrand in terms of a tilted measure}
\label{subs-com}

We next obtain a representation for the integrand of the integral $\mathcal{I}^n_\theta$ in~\eqref{In} using a change of measure.   
Once again, from Section~\ref{subs-reform}, recall the i.i.d.\ sequence of random variables
  $(Y_j)_{j \in \N}$ defined on $(\Omega,\mathcal{F},\mathbb{P})$ that have density $f_p$ and are independent of
  $\Theta=(\Theta^n)_{n\in\N}$.
Fix $a>0$ such that $\mathbb{I}_p(a)<\infty$, recall the definition of $\lambda=\lambda_a$ from~\eqref{lambda_x}. Fix $n \in \N$, and consider a ``tilted'' measure $\widetilde{\mathbb{P}}^n=\widetilde{\mathbb{P}}^{n,a}$
on $(\Omega,\mathcal{F})$ such that the (marginal) distribution of $\Theta^n$
remains
unchanged but conditioned on $\Theta = \theta \in \SS$,   
$\{Y^n_j, j = 1, \ldots, n\}$ are still  independent, but not
identically distributed, with   $Y_j^n$ having  
   density  $\newdens^{n}_j=\newdens^{n,a}_{\theta,j}$ given by
\begin{equation}\label{tilt_density}
\newdens^n_j(y) :=  \exp\left(\left\langle \lambda_a,\left(\sqrt{n}\theta^n_jy,\abs{y}^p\right)\right\rangle-\Lambda_p\left(\sqrt{n}\theta^n_i\lambda_{1},\lambda_{2}\right)\right) f_p(y), 
    \quad y \in \R, 
\end{equation}
with $\Lambda_p$ as defined in~\eqref{logmgf_lp} and as before we omit the explicit dependence and other quantities of $\newdens^n_j$ on $p$ and $a$. For $\theta\in\SS$, denote by $\widetilde{\mathbb{P}}^n_\theta$ and $\widetilde{\mathbb{E}}^n_{\theta}$ the probability and the expectation taken with respect to $\widetilde{\mathbb{P}}^n$, conditioned on $\theta$, and likewise, let $\widetilde{\mathrm{Var}}^n_\theta(\cdot)$ and $\widetilde{\mathrm{Cov}}^n_\theta(\cdot,\cdot)$ denote the conditional variance and conditional covariance, respectively, under $\widetilde{\mathbb{P}}^n_\theta$.

 Recall from~\eqref{logmgf_lp} and~\eqref{phi_p} that $\Lambda_p(t)=\log \Phi_p(t)$ for $t\in\mathbb{R}^2$. 
Then,  by \eqref{Znj}, \eqref{phi_p} and \eqref{tilt_density}, it follows that for $j=1, \ldots, n$ and $\beta = (\beta_1, \beta_2) \in \R^2$, 
\begin{equation}
  \label{tilt_mgf}
  \begin{array}{rcl}
\widetilde{\mathbb{E}}^n_{\theta} \left[ e^{\langle \beta, \bV^n_{j} \rangle} \right]
&= & \frac{\Phi_p(\sqrt{n}\theta_j^n (\beta_{1} + \lambda_1), \beta_{2} + \lambda_2)}{\Phi_p(\sqrt{n}\theta_j^n \lambda_1, \lambda_2)},
\end{array}
\end{equation}
and hence, 
\begin{equation}
\label{tilt_mean}
\widetilde{\mathbb{E}}^n_{\theta}\left[\bV^n_{j}\right] = \nabla_{\beta} \left. 
\widetilde{\mathbb{E}}^n_{\theta} \left[ e^{\langle \beta, \bV^n_{j} \rangle} \right]\right|_{\beta =(0,0)}
= \nabla\log \Phi_p(\sqrt{n}\theta_j^n \lambda_1, \lambda_2).
 \end{equation}
Denoting $\bV^n_j=\left(\bV^n_{j,1},\bV^n_{j,2}\right)$, by~\eqref{tilt_mgf}, we also have for $k,l=1,2$,
\begin{align}
\widetilde{\mathrm{Cov}}^n_\theta\left(\bV^n_{j,k},\bV^n_{j,l}\right)& =
\widetilde{\mathbb{E}}^n_\theta\left[\bV^n_{j,k}\bV^n_{j,l}\right]-\widetilde{\mathbb{E}}^n_\theta\left[\bV^n_{j,k}\right]\widetilde{\mathbb{E}}^n_\theta\left[\bV^n_{j,l}\right]\nonumber\\
&=\partial^2_{\beta_k,\beta_l}\left.\widetilde{\mathbb{E}}^n_{\theta} \left[ e^{\langle \beta, \bV^n_{j} \rangle} \right]\right|_{\beta =(0,0)}-
\widetilde{\mathbb{E}}^n_\theta\left[\bV^n_{j,k}\right]\widetilde{\mathbb{E}}^n_\theta\left[\bV^n_{j,l}\right]\nonumber\\
&=\partial_{k,l}^2 \log \Phi_p(\sqrt{n}\theta_j^n \lambda_1, \lambda_2). \label{tilt_cov}
\end{align} 

For $x\in\mathbb{J}_p$, define $\hV^n_{x}$ to be
\begin{equation}\label{Zbar}
\hV^n_{x}:=\frac{1}{\sqrt{n}}\sum_{j=1}^n\left(\bV^n_{j}-x\right).
\end{equation}

\begin{lemma}\label{conv_c_gamma}
For $x\in\mathbb{J}_p$ and $\theta\in\mathbb{S}$, recall the definitions of $\bar{V}^n_{j}$, $\Phi_p$, $c^n_{x}$, $\Hess^n_{x}$ and $\hV^n_x$ given in~\eqref{Znj}~\eqref{phi_p},~\eqref{def-cnhessn} and~\eqref{Zbar}. Then
\begin{align}
  c^n_{x}(\theta^n) &= \widetilde{\mathbb{E}}^n_\theta\left[\hV^n_{x} \right],\label{cn}\\
\left\langle \Hess^n_{x}\left(\theta^n\right) t, t\right\rangle  &=  \widetilde{{\rm Var}}_{\theta}^n \left(\left \langle t, \hV^n_{x} \right\rangle\right)
 , \quad \mbox{ for all } t \in \R^2.  \label{Gammanx} 
  \end{align}
Moreover, for $t=(t_1,t_2)\in\mathbb{R}^2$,
\begin{equation} \label{mu_hat}
\hat{\mu}^n_{x,\theta}(t):=\widetilde{\mathbb{E}}^n_\theta\left[e^{i\left\langle t,\sqrt{n}\hV^n_{x}\right\rangle} \right]=e^{-i\langle t,nx \rangle}\prod_{j=1}^n\frac{\Phi_p(\sqrt{n}\theta^n_j(\lambda_{x,1}+it_1),\lambda_{x,2}+it_2)}{\Phi_p(\sqrt{n}\theta^n_j\lambda_{x,1},\lambda_{x,2})}.
\end{equation}
Furthermore, for $\smeas$-a.e.\ $\theta$, as $n \rightarrow \infty$,
$\Hess^n_{x}(\theta^n)$ converges to the quantity $\Hess_{x}$ defined in~\eqref{gamma_x}. 
\end{lemma}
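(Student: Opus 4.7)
\textbf{Plan for Lemma \ref{conv_c_gamma}.}

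The plan is to prove the four assertions in turn, all by direct calculation exploiting the exponential-family structure of the tilted density $\newdens^n_j$ defined in \eqref{tilt_density}. Write $\lambda = \lambda_x$ throughout, and recall that under $\widetilde{\mathbb{P}}^n_\theta$ the vectors $\{\bV^n_j\}_{j=1}^n$ are independent, with moments and covariances already computed in \eqref{tilt_mean}--\eqref{tilt_cov} as partial derivatives of $\log \Phi_p$ at $(\sqrt n\theta^n_j\lambda_1,\lambda_2)$.

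For \eqref{cn}, I start from the definition \eqref{def-cnhessn}, namely $c^n_x(\theta^n)=\sqrt n\nabla\Psi^n_{p,\theta}(\lambda)-\sqrt n\nabla\Psi_p(\lambda)$, and replace $\nabla\Psi_p(\lambda)$ by $x$ using \eqref{gradPsi}. Differentiating \eqref{psin_p} entry-by-entry via the chain rule exhibits $\nabla\Psi^n_{p,\theta}(\lambda)$ as a $\frac{1}{n}$-weighted sum in which the $j$-th summand equals $\widetilde{\mathbb{E}}^n_\theta[\bV^n_j]$ by \eqref{tilt_mean}; invoking \eqref{Zbar} now yields \eqref{cn}. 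For \eqref{Gammanx}, differentiating once more and applying \eqref{tilt_cov} shows $\hess\Psi^n_{p,\theta}(\lambda)=\frac{1}{n}\sum_j\widetilde{\mathrm{Cov}}^n_\theta(\bV^n_j)$; by independence of the $\bV^n_j$ under the tilted measure, the right-hand side equals $\widetilde{\mathrm{Cov}}^n_\theta(\hV^n_x)$, and contracting against $t$ gives \eqref{Gammanx}. For \eqref{mu_hat}, I rewrite $\sqrt n\hV^n_x=\sum_j\bV^n_j-nx$, pull the deterministic factor $e^{-i\langle t,nx\rangle}$ out of the expectation, use independence to split into a product of characteristic functions, and apply \eqref{tilt_mgf} with the complex argument $\beta=it$. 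The extension of \eqref{tilt_mgf} to purely imaginary $\beta$ is immediate from \eqref{tilt_density} since the integrand has the same modulus as in the real case and is therefore still integrable for $\lambda_2<1/p$.

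The only substantive step is the concluding convergence $\Hess^n_x(\theta^n)\to\Hess_x$. By Remark~\ref{psipn} and the chain rule, each entry of $\hess\Psi^n_{p,\theta}(\lambda)$ can be written as $\int\phi_{ij}(u)\,\empn(du)$ for continuous test functions obtained by composing $\partial_1^a\partial_2^b\log\Phi_p$ with $u\mapsto(u\lambda_1,\lambda_2)$ and multiplying by the appropriate power of $u$; explicitly $\phi_{11}(u)=u^2\partial_1^2\log\Phi_p(u\lambda_1,\lambda_2)$, $\phi_{12}(u)=u\,\partial_1\partial_2\log\Phi_p(u\lambda_1,\lambda_2)$, and $\phi_{22}(u)=\partial_2^2\log\Phi_p(u\lambda_1,\lambda_2)$. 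Each $\phi_{ij}$ is continuous and has polynomial growth in $u$ by Lemma~\ref{growth_lmgf}, Lemma~\ref{slln} gives $\mathcal{W}_p(\empn,\normal)\to 0$ for $\smeas$-a.e.\ $\theta$, and Lemma~\ref{wass-conv} then passes the integrals to the limit $\normal(\phi_{ij})$, which are exactly the corresponding entries of $\hess\Psi_p(\lambda)=\Hess_x$ (after an interchange of differentiation and integration in \eqref{psi} that is justified by the same growth estimates). The main technical obstacle is that Lemma~\ref{wass-conv} only handles test functions of polynomial growth of degree at most $p$, whereas for $p$ close to $1$ the degree of $\phi_{11}$ can exceed $p$. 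I would close this gap by upgrading to $\mathcal{W}_m$-convergence for every $m\in\N$ for $\sigma$-a.e.\ $\theta$: this follows from the strong law of large numbers applied to $\frac{1}{n}\sum_j|\sqrt n\theta^n_j|^m$ together with the representation of $\theta^n$ via i.i.d.\ standard Gaussians normalized by their Euclidean norm, using $\|Z^n\|/\sqrt n\to 1$ a.s. Once moments of all orders of $\empn$ converge to those of $\normal$, Lemma~\ref{wass-conv} extends to all polynomial-growth test functions and the proof concludes.
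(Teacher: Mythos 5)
Your proofs of \eqref{cn}, \eqref{Gammanx}, and \eqref{mu_hat} follow the same direct calculation as the paper: expressing $c^n_x$ and $\Hess^n_x$ via \eqref{def-cnhessn}, substituting $\nabla\Psi_p(\lambda_x)=x$ from \eqref{gradPsi}, and recognizing the resulting sums as tilted means and variances via \eqref{tilt_mean}--\eqref{tilt_cov}, with \eqref{mu_hat} following from independence and \eqref{tilt_mgf} at purely imaginary argument. Your justification of the analytic continuation to $\beta=it$ is correctly reasoned (the relevant point being that $|e^{i\langle t,\bar V\rangle}|=1$, so integrability against $\newdens^n_j$ is preserved), even if the phrase ``same modulus as in the real case'' is loosely worded.

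For the concluding convergence $\Hess^n_x(\theta^n)\to\Hess_x$, your route is again the same as the paper's --- write each entry as $\int\phi_{ij}(u)\,L^n_\theta(du)$ and pass to the limit via Lemma~\ref{slln} and Lemma~\ref{wass-conv} --- but you have spotted something the paper's own proof glosses over. You are right that for $1<p<2$ the test functions $\phi_{ij}$ grow like $|u|^{p/(p-1)}$ (since $(\log M_{\gamma_p})''(t)\sim t^{(2-p)/(p-1)}$ at infinity), and $p/(p-1)>p$ throughout $(1,2)$, not just for $p$ near $1$; consequently, Lemma~\ref{wass-conv} applied with exponent $p$ alone does not cover $\phi_{ij}$. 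Your proposed fix --- upgrading to $\mathcal W_m$-convergence of $L^n_\theta$ to $\gamma_2$ for every $m$, obtained from the Gaussian representation $\theta^n_j\buildrel(d)\over=Z^n_j/\norm{Z^n}$, a triangular-array law of large numbers for $\frac1n\sum_j|Z^n_j|^m$, and $\norm{Z^n}/\sqrt n\to1$ --- is sound and is exactly the kind of argument the paper itself uses in Appendix~\ref{subs-clt}; to get the claim for $\smeas$-a.e.\ $\theta$ under arbitrary coupling of $(\Theta^n)_n$ one should run Borel--Cantelli on the Gaussian concentration tails rather than appeal to an SLLN directly, the same device used in \eqref{op1}. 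Alternatively, one can observe that Lemma~\ref{slln} is stated ``for $p\in(1,\infty)$,'' so applying it with Wasserstein exponent $q=\lceil p/(p-1)\rceil$ (or any $q$ exceeding the growth degree) already suffices without constructing a new law of large numbers; but this requires reading the lemma as universally quantified over its exponent, which the paper's invocation does not make explicit. Either way, you have identified a real imprecision in the paper's argument, and your patch closes it.
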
 
\begin{proof}  
We fix $\theta \in \SS$ and $x$ in the domain $\mathbb{J}_p$ of $\Psi^*_p$ defined in~\eqref{def-Jp} and omit the subscript $x$ from $\lambda_x$ for notational simplicity. 
By~\eqref{tilt_mean}, \eqref{Zbar}, the definition of $\Psi_{p,\theta}^n$ in  \eqref{psin_p} and \eqref{gradPsi},
we have,
\begin{align}
  \widetilde{\mathbb{E}}^n_\theta\left[\hV^n_{x} \right]
  =\frac{1}{\sqrt{n}}\sum_{j=1}^n\left(-x+\widetilde{\mathbb{E}}_{\theta}^n
  [\bV_{j}^n] \right)
  &=\frac{1}{\sqrt{n}}\sum_{j=1}^n\left(-x+\nabla \log \left(\Phi_p(\sqrt{n}\theta^n_j\lambda_1,\lambda_2)\right)\right)\nonumber\\
& = \frac{1}{\sqrt{n}}\left(-nx+n\nabla\Psi^n_{p,\theta}(\lambda)\right) \nonumber\\
&=\sqrt{n}\nabla\left(\Psi^n_{p,\theta}(\lambda)-\Psi_p(\lambda)\right)\nonumber.
\end{align}
When combined with \eqref{def-cnhessn}, this proves \eqref{cn}.
Similarly, by the independence of $\bV^n_{j}, j = 1, \ldots, n,$ under $\widetilde{\mathbb{P}}^n_{\theta}$, \eqref{tilt_cov}, the definition of $\Psi^n_{p,\theta}$ in \eqref{psin_p} and the definition of $\Hess^n_x$ in~\eqref{def-cnhessn}, it follows that 
\begin{align*}
   \widetilde{{\rm Var}}_{\theta}^n \left( \left\langle t, \hV^n_{x} \right\rangle\right)
   &=  \frac{1}{n} \sum_{j=1}^n  \widetilde{{\rm Var}}_{\theta}^n \left(\left\langle t, \bV^n_{j}\right \rangle\right)
   = \langle  \Hess^n_{x}(\theta^n) t, t\rangle, 
\end{align*}
which proves \eqref{Gammanx}. 
Also,  by the definitions of $\hat{\mu}^n_{x,\theta}$ and $\hV^n_x$ in~\eqref{mu_hat} and \eqref{Zbar}, respectively,  the independence of $\bV^n_{j}, j = 1, \ldots, n,$ under $\widetilde{\mathbb{P}}^n_{\theta}$ and the relation 
 \eqref{tilt_mgf}, it follows that 
for $t\in\mathbb{R}^2$,
\begin{align*}
  \hat{\mu}^n_{x,\theta}(t)& = e^{-i\langle t,nx \rangle}\prod_{j=1}^n\widetilde{\mathbb{E}}^n_\theta\left[e^{i\langle t,\bV^n_{j}\rangle} \right]  
  = e^{-i\langle t,nx \rangle}\prod_{j=1}^n
 \frac{\Phi_p(\sqrt{n}\theta^n_j(\lambda_1+it_1),\lambda_2+it_2)}{\Phi_p(\sqrt{n}\theta^n_j\lambda_1,\lambda_2)}, 
\end{align*}
which proves \eqref{mu_hat}.

    It only remains to 
establish the  convergence stated in the last assertion of the lemma. By~\eqref{def-cnhessn} and \eqref{psin_p}, it follows that for each $i,j = 1,2$, there exists $\alpha, \beta\in\N$ such that the entry $(\Hess^n_{x}(\theta^n) )_{ij}$ can be written as
\begin{align*}
\left(\Hess^n_{x}(\theta^n) \right)_{ij}&= \frac{1}{n}\sum_{j=1}^n(\sqrt{n}\theta^n_j)^\alpha\partial_1^{\alpha}\partial_2^{\beta}\log\Phi_p(\sqrt{n}\theta^n_j\lambda_1,\lambda_2)\\
& = \int_\mathbb{R}u^{\alpha}\partial_1^{\alpha}\partial_2^{\beta}\log\Phi_p(u\lambda_1,\lambda_2)\empn(du).
\end{align*}
Since, the moment generating function $\Phi_p$ is infinitely differentiable, the mapping $u\mapsto\phi(u):=u^{\alpha}\partial_1^{\alpha}\partial_2^{\beta}\log\Phi_p(u\lambda_1,\lambda_2)$ is continuous. Moreover, $\phi$ has polynomial growth by Lemma~\ref{growth_lmgf}.
Since Lemma~\ref{slln} implies that $\mathcal{W}_p\left(\empn,\normal\right)\to 0$ as $n\to\infty$, it follows that
\begin{align*}
\left(\Hess^n_{x}(\theta^n) \right)_{ij}\to\int_\mathbb{R}u^{\alpha}\partial_1^{\alpha}\partial_2^{\beta}\log\Phi_p(u\lambda_1,\lambda_2)\gamma_2(du)=\left(\Hess_x\right)_{ij},
\end{align*}
where from Lemma \ref{wass-conv}(2) that, as $n$ tends to infinity, the last equality follows by the definition of $\Hess_x$ in~\eqref{gamma_x}.
\end{proof}

\subsection{Estimates of the integrand}
\label{subs-integrand}
\begin{lemma}\label{moment-est}
Fix $x\in\mathbb{J}_p$. Recall the definitions of $\hV^n_{x}$ and
$(\bar{V}^n_{j})_{j=1,\ldots,n}$ given in \eqref{Zbar} and \eqref{Znj}, respectively. There exist constants $\widetilde{C}<\infty$ such that for all $n\in\N$ and for $\sigma$-a.e.\ $\theta$,
\begin{align}\label{moment-bnd}
\frac{1}{n}\sum_{j=1}^n\widetilde{\mathbb{E}}^n_\theta\left[ \norm{\bar{V}^n_{j}-\widetilde{\mathbb{E}}^n_\theta[\bar{V}^n_{j}]}^3\right] < \widetilde{C}, \quad \frac{1}{n}\sum_{j=1}^n\widetilde{\mathbb{E}}^n_\theta\left[ \norm{\bar{V}^n_{j}-\widetilde{\mathbb{E}}^n_\theta[\bar{V}^n_{j}]}^4\right] < \widetilde{C},
\end{align}
and for all $n\in\N$,
\begin{align}\label{moment-bnd2}
\widetilde{\mathbb{E}}^n_\theta\left[ \norm{\hV^n_{x} - \widetilde{\mathbb{E}}_{\theta}^n[\hV^n_{x}}^3\right]
< \widetilde{C}.
\end{align}
\end{lemma}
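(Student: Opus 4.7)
The argument proceeds in three stages: first bound the individual central moments $\widetilde{\mathbb{E}}^n_\theta\bigl[\|\bar{V}^n_j - \widetilde{\mathbb{E}}^n_\theta[\bar{V}^n_j]\|^k\bigr]$ pointwise in $\theta$ and $j$; then average over $j$ using convergence of the empirical measure $L^n_\theta$; finally, deduce the third-moment bound on $\hV^n_x$ by exploiting independence of the summands under $\widetilde{\mathbb{P}}^n_\theta$.

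For the first stage, relation~\eqref{tilt_mgf} identifies the cumulant generating function of $\bar{V}^n_j$ under $\widetilde{\mathbb{P}}^n_\theta$ as
\[
\beta \mapsto \Lambda_p\bigl(\sqrt{n}\theta^n_j(\lambda_{x,1}+\beta_1),\lambda_{x,2}+\beta_2\bigr)-\Lambda_p\bigl(\sqrt{n}\theta^n_j\lambda_{x,1},\lambda_{x,2}\bigr).
\]
Hence every cumulant of $\bar{V}^n_j$ is a mixed partial derivative of $\Lambda_p$ evaluated at $(\sqrt{n}\theta^n_j\lambda_{x,1},\lambda_{x,2})$, multiplied by a power of $\sqrt{n}\theta^n_j$ arising from the chain rule in the first argument. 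Since any $k$-th central moment is a polynomial in the cumulants of order at most $k$, Lemma~\ref{growth_lmgf} yields integers $\alpha_k$ and constants $C_k < \infty$ with
\[
\widetilde{\mathbb{E}}^n_\theta\bigl[\|\bar{V}^n_j - \widetilde{\mathbb{E}}^n_\theta[\bar{V}^n_j]\|^k\bigr] \leq C_k\bigl(1 + |\sqrt{n}\theta^n_j|^{\alpha_k}\bigr), \qquad k = 2, 3, 4.
\]

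Averaging these estimates over $j$ reduces~\eqref{moment-bnd} to showing that $\frac{1}{n}\sum_{j=1}^n |\sqrt{n}\theta^n_j|^{\alpha_k}$ stays bounded in $n$ for $\sigma$-a.e.\ $\theta$. For $\alpha_k \leq p$ this follows immediately from Lemma~\ref{slln} together with Lemma~\ref{wass-conv}; for larger exponents, I would leverage the representation $\theta^n \stackrel{d}{=} Z^n/\|Z^n\|$ with $Z^n$ standard Gaussian on $\mathbb{R}^n$, so that
\[
\frac{1}{n}\sum_{j=1}^n |\sqrt{n}\theta^n_j|^{\alpha_k} = \Bigl(\frac{\sqrt{n}}{\|Z^n\|}\Bigr)^{\!\alpha_k}\cdot\frac{1}{n}\sum_{j=1}^n |Z^n_j|^{\alpha_k},
\]
and then invoke Gaussian concentration of $\|Z^n\|^2/n$ around $1$ together with the law of large numbers for $|Z^n_j|^{\alpha_k}$.

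For~\eqref{moment-bnd2}, I would use that $\hV^n_x - \widetilde{\mathbb{E}}^n_\theta[\hV^n_x] = n^{-1/2}\sum_{j=1}^n(\bar{V}^n_j - \widetilde{\mathbb{E}}^n_\theta[\bar{V}^n_j])$ is a normalized sum of independent mean-zero random vectors in $\mathbb{R}^2$ under $\widetilde{\mathbb{P}}^n_\theta$. Applying the elementary bound $\|\cdot\|^3 \leq C\sum_{i=1}^2 |\cdot_i|^3$ and the scalar Rosenthal inequality coordinate by coordinate gives
\[
\widetilde{\mathbb{E}}^n_\theta\bigl[\|\hV^n_x - \widetilde{\mathbb{E}}^n_\theta[\hV^n_x]\|^3\bigr] \leq C\biggl(\frac{1}{\sqrt{n}}\cdot\frac{1}{n}\sum_{j=1}^n \widetilde{\mathbb{E}}^n_\theta\bigl[\|\bar{V}^n_j - \widetilde{\mathbb{E}}^n_\theta[\bar{V}^n_j]\|^3\bigr] + \Bigl(\frac{1}{n}\sum_{j=1}^n \widetilde{\mathbb{E}}^n_\theta\bigl[\|\bar{V}^n_j - \widetilde{\mathbb{E}}^n_\theta[\bar{V}^n_j]\|^2\bigr]\Bigr)^{3/2}\biggr),
\]
where the first term is controlled by~\eqref{moment-bnd} and the second by the convergence $\Hess^n_x(\theta^n) \to \Hess_x$ from Lemma~\ref{conv_c_gamma}. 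The main technical obstacle lies in the second stage: Lemma~\ref{slln} alone only handles polynomial growth up to degree $p$, yet the cumulant bounds force larger exponents $\alpha_k$ (reflecting the super-polynomial $t^{p/(p-1)}$-growth of $\log M_{\gamma_p}$ at infinity), so the Gaussian representation of $\theta^n$ is essential to control these higher-order Ces\`aro sums uniformly in $n$ for $\sigma$-a.e.\ $\theta$.
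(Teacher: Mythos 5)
Your structure---cumulant bounds for the tilted single-site law, Ces\`aro averaging via $L^n_\theta$, and independence to control $\hV^n_x$---matches the paper's, and your Rosenthal-inequality step for \eqref{moment-bnd2} is a valid alternative to the paper's Jensen-plus-explicit-fourth-moment expansion. However, there is a genuine gap in your second stage. You correctly observe that the cumulant bounds force polynomial degrees $\alpha_k$ in $\sqrt{n}\theta^n_j$ that can exceed $p$, but your proposed workaround via $\theta^n\stackrel{(d)}{=}Z^n/\|Z^n\|$, Gaussian concentration, and the LLN cannot deliver the lemma as stated. The representation is only an equality in distribution for each fixed $n$; the conclusion must hold for $\sigma$\mbox{-}a.e.\ $\theta$, where $\sigma$ is an \emph{arbitrary} coupling of $(\Theta^n)_{n\in\N}$ subject only to the marginal constraint, so one cannot assume the entire sequence is simultaneously realized from a single Gaussian array. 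Pushing your plan through almost surely would require, e.g., a Borel--Cantelli argument from concentration bounds summable in $n$, not the LLN alone.

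In fact, the detour is unnecessary. Lemma~\ref{slln} holds for \emph{every} exponent in $(1,\infty)$: since $L^n_\theta$ does not depend on $p$, the Wasserstein index in that lemma is a free parameter, decoupled from the fixed $\ell_p$-sphere parameter. Consequently, for any polynomial degree $\alpha_k$, one may choose a Wasserstein exponent $q>\alpha_k$, apply Lemma~\ref{slln} with that $q$, and then Lemma~\ref{wass-conv} converts $\mathcal{W}_q(L^n_\theta,\gamma_2)\to 0$ into the $\sigma$\mbox{-}a.e.\ convergence of $\int\phi\,dL^n_\theta\to\int\phi\,d\gamma_2$ for any continuous $\phi$ of polynomial growth of degree at most $q$. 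This is precisely how the paper handles the average of the fourth cumulant against $L^n_\theta$ in its proof, with no need for the Gaussian representation.
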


\begin{proof}
Due to the following standard inequalities,
$(a^2+b^2)^{3/2}\leq C'(\abs{a}^3+\abs{b}^3)$ and 
$ \frac{1}{n}\sum_{j=1}^n|a_j|^3 \leq (\frac{1}{n}\sum_{j=1}^n\abs{a_j}^4)^{3/4}$, 
to show \eqref{moment-bnd} it suffices to show the boundedness of 
\[
\frac{1}{n}\sum_{j=1}^n\widetilde{\mathbb{E}}^n_\theta\left[\left(\bar{V}^n_{j,1}-\widetilde{\mathbb{E}}^n_\theta\left[\bar{V}^n_{j,1}\right] \right)^4\right]
\quad \text{and} \quad
 \frac{1}{n}\sum_{j=1}^n\widetilde{\mathbb{E}}^n_\theta\left[\left(\bar{V}^n_{j,2}-\widetilde{\mathbb{E}}^n_\theta\left[\bar{V}^n_{j,2}\right] \right)^4\right].
\]
We show boundedness of just the first term; boundedness of the second can be shown analogously.  Using following relation between cumulants and central moments, by simple calculation we have
\begin{align}
&\frac{1}{n}\sum_{j=1}^n\widetilde{\mathbb{E}}^n_\theta\left[\left(\bar{V}^n_{j,1}-\widetilde{\mathbb{E}}^n_\theta\left[\bar{V}^n_{j,1}\right] \right)^4\right] \nonumber\\
&\quad= \frac{3}{n}\sum_{j=1}^n\widetilde{\mathbb{E}}^n_\theta\left[\left(\bar{V}^n_{j,1}-\widetilde{\mathbb{E}}^n_\theta\left[\bar{V}^n_{j,1}\right] \right)^2\right]+\int_\mathbb{R}\partial^4_1(\log\Phi_p(u\lambda_{x,1},\lambda_{x,2}))\empn(du)\nonumber\\
&\quad = 3\widetilde{\rm Var}^n_\theta\left (\hV^n_{x,1}\right)+\int_\mathbb{R}\partial^4_1(\log\Phi_p(u\lambda_{x,1},\lambda_{x,2}))\empn(du).\label{cumu-central}
\end{align}
Now, by \eqref{Gammanx}, $\widetilde{\rm Var}^n_\theta (\hV^n_{x,1})=\left(\Hess^n_{x}(\theta^n)\right)_{11}$ and so by the last assertion of Lemma \ref{conv_c_gamma}, for $\sigma$-a.e.\ $\theta$, as $n\to\infty$, $\widetilde{\rm Var}^n_\theta (\hV^n_{x,1})$ converges to $(\Hess_x)_{11}$. Also, since the function $\R\ni \mapsto \partial^4_1(\log\Phi_p(u\lambda_{x,1},\lambda_{x,2}))$ is continuous and has polynomial growth (the latter by Lemma \ref{growth_lmgf}), Lemma \ref{slln} and Lemma \ref{wass-conv}(2) together show that for $\sigma$-a.e.\ $\theta$, the second term on the right-hand-side of \eqref{cumu-central} also has a finite limit as $n\to\infty$. Therefore, for $\sigma$-a.e.\ $\theta$, the sum of the two terms is uniformly bounded.

Next, we deal with the second inequality. By~\eqref{Zbar}, we have
\[
\hV^n_{x} - \widetilde{\mathbb{E}}_{\theta}^n[\hV^n_{x}] = \frac{1}{\sqrt{n}}\sum_{j=1}^n \left(\bar{V}^n_j-\widetilde{\mathbb{E}}^n_\theta[\bar{V}^n_{j}]\right).
\]
By Jensen's inequality, we further obtain
\begin{align*}
&\widetilde{\mathbb{E}}^n_\theta\left[ \norm{\frac{1}{\sqrt{n}}\sum_{j=1}^n \left(\bar{V}^n_j-\widetilde{\mathbb{E}}^n_\theta[\bar{V}^n_{j}]\right)}^3\right] \\
&\qquad\leq  \left(\widetilde{\mathbb{E}}^n_\theta\left[ \norm{\frac{1}{\sqrt{n}}\sum_{j=1}^n \left(\bar{V}^n_j-\widetilde{\mathbb{E}}^n_\theta[\bar{V}^n_{j}]\right)}^4\right]\right)^{3/4}\\
&\qquad\leq \left(\frac{2}{n^2}\widetilde{\mathbb{E}}^n_\theta\left[ \left(\sum_{j=1}^n \left(\bar{V}^n_{j,1}-\widetilde{\mathbb{E}}^n_\theta[\bar{V}^n_{j,1}]\right)\right)^4\right]+\frac{2}{n^2}\widetilde{\mathbb{E}}^n_\theta\left[ \left(\sum_{j=1}^n \left(\bar{V}^n_{j,2}-\widetilde{\mathbb{E}}^n_\theta[\bar{V}^n_{j,2}]\right)\right)^4\right]\right)^{3/4}.
\end{align*}
Now, to show the boundedness of the last display, it suffices to show the boundedness of 
\[
\frac{1}{n^2}\widetilde{\mathbb{E}}^n_\theta\left[ \left(\sum_{j=1}^n \left(\bar{V}^n_{j,m}-\widetilde{\mathbb{E}}^n_\theta[\bar{V}^n_{j,m}]\right)\right)^4\right]
\quad \text{for} \quad m=1,2.
\]
We show the boundedness of the first term above, and the second follows similarly.
For $m\in\{1,2\}$, by the independence of $(\bar{V}^n_{j,1})_{j=1,\ldots,n}$, we have
\begin{align*}
&\frac{1}{n^2}\widetilde{\mathbb{E}}^n_\theta\left[ \left(\sum_{j=1}^n \left(\bar{V}^n_{j,m}-\widetilde{\mathbb{E}}^n_\theta[\bar{V}^n_{j,m}]\right)\right)^4\right]\\
&\qquad= \frac{1}{n^2}\sum_{j=1}^n\widetilde{\mathbb{E}}^n_\theta\left[ \left(\bar{V}^n_{j,m}-\widetilde{\mathbb{E}}^n_\theta[\bar{V}^n_{j,m}]\right)^4\right]+\frac{6}{n^2}\sum_{1\leq i<j\leq n}\widetilde{\mathbb{E}}^n_\theta\left[ \left(\bar{V}^n_{i,1}-\widetilde{\mathbb{E}}^n_\theta[\bar{V}^n_{i,1}]\right)^2\right]\widetilde{\mathbb{E}}^n_\theta\left[ \left(\bar{V}^n_{j,m}-\widetilde{\mathbb{E}}^n_\theta[\bar{V}^n_{j,m}]\right)^2\right]\\
&\qquad \leq \frac{1}{n^2}\sum_{j=1}^n\widetilde{\mathbb{E}}^n_\theta\left[ \left(\bar{V}^n_{j,m}-\widetilde{\mathbb{E}}^n_\theta[\bar{V}^n_{j,m}]\right)^4\right]+
6\left(\frac{2}{n}\sum_{j=1}^n\widetilde{\mathbb{E}}^n_\theta\left[ \left(\bar{V}^n_{j,m}-\widetilde{\mathbb{E}}^n_\theta[\bar{V}^n_{j,m}]\right)^4\right]\right)^2
\end{align*}
which is bounded above by \eqref{moment-bnd}. This proves \eqref{moment-bnd2}.
\end{proof}

\begin{lemma}\label{mu_est}
  Fix $x\in\mathbb{J}_p$ and recall the definitions of $\Hess_x$ $\Phi_p$, $c^n_{x}$, $\Hess^n_{x}$, $\hV^n_{x}$ and
$\hat{\mu}^n_{x,\theta}$ given in \eqref{gamma_x}, \eqref{phi_p}, \eqref{def-cnhessn} \eqref{Zbar} and \eqref{mu_hat}, respectively. Then for $\smeas$-a.e.\ $\theta$  and every neighborhood $U\subset\mathbb{R}^2$ of the origin, there exist a neighborhood $\widetilde{U}$ of $x$ and a constant $C\in(0,1)$  such that for all sufficiently large $n$,
\begin{equation} \label{sup_ct}
  \sup_{t\in U^c} \abs{\hat{\mu}^n_{y,\theta}(t)}^{1/n}< C, \quad y\in \widetilde{U}. 
\end{equation}
Furthermore, for $\smeas$-a.e.\ $\theta$, there exist a neighborhood $U\subset\mathbb{R}^2$ of the origin and a neighborhood $\widetilde{U}$ of $x$ such that for all sufficiently large $n$, 
\begin{align}
\abs{\hat{\mu}^n_{y,\theta}\left(\frac{t}{\sqrt{n}}\right)e^{-itc^n_{y}(\theta^n)}} \leq\exp\left(-\frac{1}{2}\left\langle (\Hess_y -\varepsilon I)t,t\right\rangle \right),\quad  y\in\widetilde{U},\quad t\in U.\label{l2_bound}
\end{align}
\end{lemma}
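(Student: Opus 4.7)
The plan is to prove the two estimates \eqref{sup_ct} and \eqref{l2_bound} separately; both follow a familiar sharp-large-deviation strategy but require care because the summands $\bar{V}^n_j$ are non-identically distributed under $\widetilde{\mathbb{P}}^n_\theta$, with laws depending on the individual weights $\sqrt{n}\theta^n_j$. Fix a $\sigma$-full-measure set of $\theta$ on which Lemma \ref{slln} holds and the limit $\Hess^n_y(\theta^n)\to\Hess_y$ of Lemma \ref{conv_c_gamma} is valid; all subsequent arguments are deterministic.

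For the local bound \eqref{l2_bound}, I would Taylor expand the characteristic function. Using $c^n_y(\theta^n) = \widetilde{\mathbb{E}}^n_\theta[\hV^n_y]$ from \eqref{cn} together with the independence of $(\bar{V}^n_j)_{j}$ under $\widetilde{\mathbb{P}}^n_\theta$, one obtains
\[
\hat{\mu}^n_{y,\theta}(t/\sqrt{n})\,e^{-i\langle t,\, c^n_y(\theta^n)\rangle} \;=\; \prod_{j=1}^n \widetilde{\mathbb{E}}^n_\theta\!\left[e^{i\langle t/\sqrt{n},\, X^n_j\rangle}\right], \qquad X^n_j := \bar{V}^n_j - \widetilde{\mathbb{E}}^n_\theta[\bar{V}^n_j].
\]
Expanding each factor to second order with Lagrange remainder of size $\tfrac{|t|^3}{6n^{3/2}}\widetilde{\mathbb{E}}^n_\theta[\|X^n_j\|^3]$, then taking logarithms and summing via $\log(1+z)=z+O(|z|^2)$, yields
\[
\log\prod_{j=1}^n \widetilde{\mathbb{E}}^n_\theta[\cdots] \;=\; -\tfrac{1}{2}\langle \Hess^n_y(\theta^n)\, t,\, t\rangle + O(|t|^3/\sqrt{n}),
\]
with the cubic error controlled by the averaged third-moment estimate of Lemma \ref{moment-est}. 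Taking real parts and invoking $\Hess^n_y(\theta^n)\to\Hess_y$ from Lemma \ref{conv_c_gamma}, upgraded to uniform convergence for $y$ in a small neighborhood $\widetilde U$ of $x$ by applying the same Wasserstein argument with a parameter, delivers \eqref{l2_bound} once $U$ and $\widetilde U$ are chosen small enough and $n$ is large enough that the cubic error is absorbed into $\tfrac{\varepsilon}{2}|t|^2$.

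For the global bound \eqref{sup_ct}, note from \eqref{mu_hat} that
\[
|\hat{\mu}^n_{y,\theta}(t)|^{1/n} \;=\; \exp\!\left(\int_{\mathbb{R}} \psi_y(u,t)\, L^n_\theta(du)\right), \quad \psi_y(u,t) := \log\bigl|\Phi_p\!\bigl(u(\lambda_{y,1}+it_1),\lambda_{y,2}+it_2\bigr)\bigr| - \log \Phi_p(u\lambda_{y,1},\lambda_{y,2}).
\]
Since $\Phi_p$ is the Laplace transform of the law of $(Y_j,|Y_j|^p)$, which for $p\in(1,\infty)$ is supported on the strictly convex curve $\{(s,|s|^p):s\in\mathbb{R}\}$ with a continuous density along it, $\psi_y(u,t)\le 0$ always, with strict inequality on a set of positive $\gamma_2$-measure for every $t\ne 0$. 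I would then split $U^c = (U^c\cap B_M)\cup B_M^c$ for some large $M$. On the compact annulus $U^c\cap B_M$, Lemma \ref{slln} together with the polynomial growth of $\psi_y$ in $u$ (via Lemma \ref{growth_lmgf}) and equicontinuity in $(t,y)$ yields uniform convergence of $\int \psi_y(u,t) L^n_\theta(du)$ to $\int \psi_y(u,t)\gamma_2(du)$ over $(t,y)\in (U^c\cap B_M)\times\widetilde U$, and the limit is bounded above by some $-\delta<0$ by compactness. For the tail $B_M^c$, I would invoke the $\mathbb{L}_s$-integrability of the product of pairs of Fourier factors established in the Claim inside the proof of Lemma \ref{g_est} to deduce that $|\hat{\mu}^n_{y,\theta}(t)|$ decays fast enough in $|t|$, uniformly in $n$ and $y\in\widetilde U$.

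The main obstacle is the tail estimate on $B_M^c$ in \eqref{sup_ct}. Unlike the classical i.i.d.\ Bahadur--Rao setting where a single fixed characteristic function provides the needed Riemann--Lebesgue decay, here the $n$ summands are heterogeneous with a $n$- and $\theta$-dependent parameter $\sqrt{n}\theta^n_j$, so one requires decay that is simultaneously uniform in the weight parameter $u$ and in $n$. A direct stationary-phase analysis of $\Phi_p(u(\lambda_1+it_1),\lambda_2+it_2)$ would be delicate because the stationary points and their curvature depend on $u$; exploiting instead the $\mathbb L_s$-integrability already established for Lemma \ref{g_est} (which aggregates pairs of summands before passing to Fourier norms) appears to be the most efficient route and is where I would focus the bulk of the technical work.
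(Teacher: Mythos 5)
Your argument for the local bound~\eqref{l2_bound} is essentially the paper's: you Taylor-expand the characteristic function of the (centered, standardized) sum, bound the remainder by the third moment (Lemma~\ref{moment-est}), and invoke $\Hess^n_y(\theta^n)\to\Hess_y$ from Lemma~\ref{conv_c_gamma}. The paper expands the characteristic function of $\hV^n_y - \widetilde{\mathbb{E}}^n_\theta[\hV^n_y]$ in one shot (via \cite[Lemma 3.3.7]{Durrett10}) rather than factor-by-factor with $\log(1+z)$; both routes work and the difference is cosmetic. This half is fine.

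Your treatment of the global bound~\eqref{sup_ct} is where there is a genuine gap. After writing $\frac{1}{n}\log|\hat{\mu}^n_{y,\theta}(t)| = \int\psi_y(u,t)\,L^n_\theta(du)$ and splitting $U^c$ into $U^c\cap B_M$ and $B_M^c$, you propose to control the tail $B_M^c$ using the $\mathbb{L}_s$-integrability established in the Claim inside Lemma~\ref{g_est}. But integrability of $\prod_j\Phi_p(\cdot)$ (or of its pairwise building blocks) in $\mathbb{L}_s(\R^2)$ does not imply the pointwise bound you need: an $L^s$ function need not vanish at infinity, and even if $|\hat{\mu}^n_{y,\theta}(t)|$ were small on $B_M^c$, what is required is $|\hat{\mu}^n_{y,\theta}(t)|^{1/n}\le C<1$, i.e.\ geometric-in-$n$ decay, which integrability gives no handle on. You flag this yourself as "where I would focus the bulk of the technical work," but as stated the route does not close.

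The paper sidesteps the $t$-space decomposition entirely by restricting in $u$-space instead: for a fixed compact window $K^{-1}\le|\sqrt{n}\theta^n_j|\le K$ away from zero, each individual ratio $\bigl|\Phi_p(\sqrt{n}\theta^n_j(\lambda_1+it_1),\lambda_2+it_2)/\Phi_p(\sqrt{n}\theta^n_j\lambda_1,\lambda_2)\bigr|$ is $\le r<1$ \emph{uniformly over all} $t\in U^c$ — including the non-compact tail — because the Riemann--Lebesgue decay is uniform in $u$ on a compact annulus and the ratio is strictly below $1$ on bounded $t\ne 0$ by continuity. Factors with $|\sqrt{n}\theta^n_j|$ outside $[K^{-1},K]$ are simply bounded by $1$. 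This yields $\sup_{t\in U^c}|\hat{\mu}^n_{y,\theta}(t)|^{1/n}\le r^{L^n_\theta([K^{-1},K]\setminus\{0\})}$, and Lemma~\ref{slln} then gives a positive asymptotic fraction $c_K=\gamma_2([K^{-1},K])>0$ of "good" indices, so the bound is $r^{c_K}<1$ in the limit. You should adopt this indicator argument: it removes both the unproved tail estimate and the equicontinuity worry in your compact-part argument (your test function $\psi_y(\cdot,t)$ can dip to $-\infty$ at zeros of $\Phi_p$, so equicontinuity over $(t,y)$ is at best delicate and not automatic).
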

\begin{proof}
  We omit the subscript $x$ of $\lambda_x$ for notational simplicity.
  Now, for $\theta \in \SS$,  and $t\in\R^2$, the relation \eqref{tilt_mgf} yields the inequality
    \begin{equation}
      \label{ineq1}
\abs{\frac{\Phi_p(\sqrt{n}\theta^n_j(\lambda_1+it_1),\lambda_2+it_2)}{\Phi_p(\sqrt{n}\theta^n_j\lambda_1,\lambda_2)}} =\abs{\widetilde{\mathbb{E}}^n_\theta\left[e^{i\langle t, \bar{V}^n_{j}\rangle}\right]}\leq \widetilde{\mathbb{E}}^n_\theta\left[\abs{e^{i\langle t, \bar{V}^n_{j}\rangle}}\right]\leq 1.
\end{equation}
Noting from~\eqref{phi_p} that $\Phi_p(t)$ is the Fourier transform of the joint density of $(Y_1,|Y_1|^p)$, evaluated at $+it$, we can apply the Riemann-Lebesgue lemma~\cite[Theorem 8.22]{Folland99} to obtain
\[
\norm{(\sqrt{n}\theta^n_j(i\lambda_1-t_1),i\lambda_2-t_2)}\to\infty  \quad
\Rightarrow \quad \abs{\Phi_p(\sqrt{n}\theta^n_j(\lambda_1+it_1),\lambda_2+it_2)}\to 0.
\]
Now for $\theta^n_j \neq 0$,
 $\norm{t}\to \infty$ implies $||(\sqrt{n}\theta^n_j(i\lambda_1-t_1),i\lambda_2-t_2)||\to\infty$. Thus, under the assumption that $\theta^n_j \neq 0$, we see that
\[ 
\lim_{\norm{t}\to\infty} \abs{\frac{\Phi_p(\sqrt{n}\theta^n_j(\lambda_1+it_1),\lambda_2+it_2)}{\Phi_p(\sqrt{n}\theta^n_j\lambda_1,\lambda_2)}}=0.
\]
Since $\Phi_p$ is a moment generating function which converges to $0$ at infinity, $\Phi_p$ is strictly smaller than $1$ other than at the origin. For any neighborhood of the origin $U\subset\mathbb{R}^2$ and any $0<K<\infty$, there exists $0<r<1$ such that for all $t\in U^c$, if $K^{-1}\leq\abs{\sqrt{n}\theta^n_j}\leq K$ and $\theta^n_j\neq 0$, then 
\[
 \abs{\frac{\Phi_p(\sqrt{n}\theta^n_j(\lambda_1+it_1),\lambda_2+it_2)}{\Phi_p(\sqrt{n}\theta^n_j\lambda_1,\lambda_2)}} < r.
\]
This implies 
  \[
 \abs{\frac{\Phi_p(\sqrt{n}\theta^n_j(\lambda_1+it_1),\lambda_2+it_2)}{\Phi_p(\sqrt{n}\theta^n_j\lambda_1,\lambda_2)}} < r^{1_{\left\{K^{-1}\leq\abs{\sqrt{n}\theta^n_j}\leq K,\theta^n_j\neq0\right\}}}.
 \]
Combining this with~\eqref{mu_hat} yields the inequality 
\begin{align*}
\sup_{t\in U^c}\abs{\hat{\mu}^n_{x,\theta}(t)}^{1/n} &\leq r^{\frac{1}{n}\sum_{j=1}^n1_{\left\{K^{-1}\leq\abs{\sqrt{n}\theta^n_j}\leq K,\theta^n_j\neq 0\right\}}}. 
\end{align*}
Since
      $\frac{1}{n}\sum_{j=1}^n1_{\{K^{-1}\leq\abs{\sqrt{n}\theta^n_j}\leq K\}}= L^n_\theta([K^{-1}, K]\setminus \{0\})$
      whose limit, as $n \rightarrow \infty$,
      is dominated by $c_K := \gamma_2 \left([K^{-1},K]\right)  > 0$ due to
    Lemma  \ref{slln}, we have for $\smeas$-a.e.\ $\theta$,
      \[
\limsup_{n \rightarrow \infty} \sup_{t\in U^c}\abs{\hat{\mu}^n_{x,\theta}(t)}^{1/n}\leq r^{c_K} < 1. 
\]
Thus, for $\smeas$-a.e.\ $\theta$, we have a uniform bound $0<C<1$ such that for all sufficiently large $n$, 
\begin{equation} \label{sup_hmu}
\sup_{t\in U^c}\abs{\hat{\mu}^n_{x,\theta}(t)}^{1/n} < C.
\end{equation}
Since $\Phi_p$ is uniformly continuous in $\lambda_x$ 
by definition and $\lambda_x$ is a infinitely differentiable function of $x$ by the inverse function theorem applied to~\eqref{lambda_x}, we may choose a neighborhood $\widetilde{U}$ of $x$ such that for $y\in\widetilde{U}$, 
\begin{equation*}
\sup_{t\in U^c}\abs{\hat{\mu}^n_{y,\theta}(t)}^{1/n} < C,
\end{equation*}
i.e., for $\smeas$-a.e.\ $\theta$ and all sufficiently large $n$ (possibly depending on $\theta$),  \eqref{sup_ct} holds.

Next, note that by \eqref{mu_hat} and \eqref{cn}, for $t \in \R^2$, 
    \[
    \hat{\mu}^n_{x,\theta}\left(\frac{t}{\sqrt{n}}\right)e^{-i\langle t,c^n_{x}(\theta^n)\rangle} =
    \widetilde{\mathbb{E}}^n_\theta\left[e^{i \left\langle t, \hV^n_{x} - \widetilde{\mathbb{E}}_{\theta}^n[\hV^n_{x}] \right\rangle}\right].  
    \]
    Thus, for $\theta\in\mathbb{S}$, by \eqref{Gammanx} and~\cite[Lemma 3.3.7]{Durrett10},
we have the following expansion:
\begin{align*}
\abs{\hat{\mu}^n_{x,\theta}\left(\frac{t}{\sqrt{n}}\right)e^{-i\langle t,c^n_{x}(\theta^n)\rangle}-1+\frac{1}{2}\langle \Hess^n_{x}(\theta^n) t,t\rangle}& \leq\widetilde{\mathbb{E}}^n_\theta\left[ \abs{\langle t,\hV^n_{x} - \widetilde{\mathbb{E}}_{\theta}^n[\hV^n_{x}]\rangle}^3\right]\\
&\leq\norm{t}^3 \widetilde{\mathbb{E}}^n_\theta\left[ \norm{\hV^n_{x} - \widetilde{\mathbb{E}}_{\theta}^n[\hV^n_{x}]}^3\right].
\end{align*}
For $\varepsilon >0$, by \eqref{moment-bnd2} of Lemma~\ref{moment-est}, we may choose a neighborhood $U\subset \R^2$ of the origin with small enough radius so that the right-hand-side of the last display is bounded by $\varepsilon||t||^2$  for $t\in U$.
On the other hand, by the convergence of $\Hess^n_{x}(\theta^n)$ to $\Hess_x$ established in Lemma~\ref{conv_c_gamma}, for $\smeas$-a.e.\ $\theta$, there exists $\varepsilon>0$ such that $\Hess^n_{x}(\theta^n)-\varepsilon I$ is positive definite for all sufficiently large $n$ (possibly depending on $\theta$) and for $t\in U$,
\begin{align*}
\abs{\hat{\mu}^n_{x,\theta}\left(\frac{t}{\sqrt{n}}\right)e^{-i\langle t,c_x^n(\theta^n)\rangle}}\leq 1-\frac{1}{2}\langle (\Hess^n_{x}(\theta^n)-\varepsilon I) t,t\rangle &\leq \exp\left(-\frac{1}{2}\langle (\Hess^n_{x}(\theta^n)-\varepsilon I)t,t\rangle \right).
\end{align*}
Note that the right-hand side of the last display converges  to the integrable function 
$\exp(-\frac{1}{2}\langle (\Hess_x -\varepsilon I)t,t\rangle)$ as $n$ tends to infinity. 
Similar to the proof of~\eqref{sup_ct}, the uniformity of the bound in~\eqref{l2_bound} follows from the definition in~\eqref{def-cnhessn},~\eqref{psin_p} and the aforementioned uniform continuity of $\Phi_p$ in $x$. 
\end{proof}

\subsection{Proof of the joint density estimate} \label{sec-proppf}

We now combine the lemmas established in Sections \ref{subs-asymptotics}--\ref{subs-integrand} 
to prove the estimate for the density $\bar{h}^n_\theta$ of $\Sn$ obtained
in Proposition \ref{prop-mainest}. 

\begin{proof}[Proof of Proposition \ref{prop-mainest}]
Fix $n\in\N$. Combining Lemma~\ref{g_est}, \eqref{Rn} and \eqref{mu_hat} of Lemma~\ref{conv_c_gamma}, we see that for $x\in\mathbb{J}_p$ and $\smeas$-a.e.\ $\theta$,
\begin{equation}\label{int_gn}
\bar{h}^n_\theta(x) = \frac{n}{2\pi}e^{-n\Psi^*_p(x)}e^{\sqrt{n}R^n_{x}(\theta^n)}\frac{n}{2\pi}\int_{\mathbb{R}^2}\hat{\mu}^n_{x,\theta}(t)dt.
\end{equation}
When compared with~\eqref{gn_est} and \eqref{def-gn}, to prove the proposition, it suffices to show that
\[
\frac{n}{2\pi}\int_{\mathbb{R}^2}\hat{\mu}^n_{x,\theta}(t)dt=\det\Hess_{x}^{-1/2}\exp\left(\norm{\Hess_{x}^{-1/2}c^n_{x}(\theta^n)}^2 \right)(1+o(1)),
\]
with the approximation uniformly for $x$ in any compact set of $\mathbb{J}_p$.

Let $U\subset \mathbb{R}^2$ be a neighborhood of the origin. We split the integral in the last display into two parts
\begin{equation}\label{int_split}
\int_{\mathbb{R}^2} \hat{\mu}^n_{x,\theta}(t)dt = \int_U \hat{\mu}^n_{x,\theta}(t)dt+\int_{U^c} \hat{\mu}^n_{x,\theta}(t)dt.
\end{equation}
Now, by the estimate~\eqref{sup_ct} in Lemma~\ref{mu_est}, we have for $C\in(0,1)$ and $s>1$,
\begin{equation}\label{int_Vc}
\abs{\int_{U^c}\hat{\mu}^n_{x,\theta}(t)dt} \leq \int_{U^c}\abs{\hat{\mu}^n_{x,\theta}(t)}dt\leq C^{n-s}\int_{U^c}\abs{\hat{\mu}^n_{x,\theta}(t)}^{s/n}dt.
\end{equation}
From the definition of $\hat{\mu}^n_{x,\theta}$ in~\eqref{mu_hat} and Lemma \ref{g_est}, we see that $|\hat{\mu}^n_{x,\theta}(t)|^{s/n}$ is integrable. Hence, the right hand side of \eqref{int_Vc}  tends to zero exponentially fast as $n$ tends to infinity. Moreover, the convergence is uniform in a neighborhood of $x$ by~\eqref{sup_ct} from Lemma~\ref{mu_est}. 

Recall the definition of $\hat{\mu}^n_{x,\theta}$ in~\eqref{mu_hat}. By~\eqref{Zbar} and~\eqref{cn}, the characteristic function of $\hV^n_x$ is given by $\hat{\mu}^n_{x,\theta}\left(\frac{t}{\sqrt{n}}\right)e^{-itc^n_{x}(\theta^n)}$. Since the sequence $(\hV^n_x)_{n\in\N}$ satisfies the Lyapunov-type condition stated in~\eqref{moment-bnd} of Lemma~\ref{moment-est}, the central limit theorem implies that it converges weakly to a centered Gaussian distribution with covariance matrix $\Hess_x$.
Thus, the corresponding characteristic functions satisfy
\begin{align} \label{limit_ct}
\hat{\mu}^n_{x,\theta}\left(\frac{t}{\sqrt{n}}\right)e^{-itc^n_{x}(\theta^n)} \to \exp\left(-\frac{1}{2}\langle \Hess_{x} t,t\rangle \right).
\end{align}

Now, by~\eqref{l2_bound} of Lemma~\ref{mu_est} and~\eqref{limit_ct}, we may apply the dominated convergence theorem, and use \eqref{limit_ct} to obtain for $\sigma$ a.e.\  $\theta$,
\begin{align}
\int_U\hat{\mu}^n_{x,\theta}(t)dt & = \frac{1}{n}\int_{\sqrt{n}U}\hat{\mu}^n_{x,\theta}\left(\frac{t}{\sqrt{n}}\right)dt\nonumber\\
& = \frac{1}{n}\int_{\sqrt{n}U}\exp\left(itc^n_x(\theta^n)-\frac{1}{2}\langle \Hess_{x} t,t\rangle \right)dt \nonumber\\
&\quad+ \frac{1}{n}\int_{\sqrt{n}U}e^{itc^n_x(\theta^n)}\left(\hat{\mu}^n_{x,\theta}\left(\frac{t}{\sqrt{n}}\right)e^{-itc^n_{x}(\theta^n)}-\exp\left(-\frac{1}{2}\langle \Hess_{x}(\theta^n) t,t\rangle \right)\right) dt\nonumber\\
&= \frac{1}{n}\int_{\mathbb{R}^2}\exp\left(itc^n_{x}(\theta^n)-\frac{1}{2}\langle \Hess_{x} t,t\rangle \right)dt(1+o(1)),\nonumber
\end{align}
with $\Hess_x$ as in~\eqref{gamma_x}. Using standard properties of Gaussian integrals, this implies that
\begin{align}
\int_U\hat{\mu}^n_{x,\theta}(t)dt = \frac{2\pi}{n}\det\Hess_{x}^{-1/2}\exp\left(\norm{\Hess_{x}^{-1/2}c^n_{x}(\theta^n)}^2 \right)(1+o(1)),\label{int_V}
\end{align}
Combining~\eqref{def-gn}, \eqref{int_gn}, \eqref{int_split},~\eqref{int_V} and the estimate of the integral over $U^c$ in~\eqref{int_Vc},
we conclude that the asymptotic expansion for   the density $\bar{h}^n_\theta(x)$ given in  \eqref{gn_est} 
holds  uniformly for $x$ in any compact subset of $\mathbb{J}_p$.

Finally, by the definition of $\lambda_x$ in \eqref{lambda_x} and the inverse function theorem, the mapping $x\mapsto\lambda_x$ is infinitely differentiable. Therefore, combining \eqref{def-gn}, \eqref{gamma_x}, \eqref{def-cnhessn} and \eqref{Rn}, we conclude $\gn$ is infinitely differentiable.
\end{proof}

\appendix
\section{Infimum of the rate function}
\label{apsec-infrfn}

In this section, we analyze the infimum of the rate function.
\begin{proof}[Proof of Lemma~\ref{inf_ratefunc}]
Recall from~\eqref{eq:Psi} and~\eqref{rate_func}, that we have the following expression for the rate function: for $t\in\mathbb{R}$, 
\begin{align}
\mathbb{I}_{p}(t) & = \inf_{\tau_1\in\mathbb{R},\tau_2>0:\tau_1\tau_2^{-1/p}=t} \Psi_p^*(\tau_1,\tau_2)\nonumber \\
& = \inf_{\tau_1\in\mathbb{R},\widetilde{\tau}_2>0:\tau_1\widetilde{\tau}_2^{-1}=t} \Psi_p^*(\tau_1,\widetilde{\tau}_2^p)\nonumber\\
& = \inf_{\widetilde{\tau}_2>0} \Psi_p^*(\widetilde{\tau}_2t,\widetilde{\tau}_2^p),\label{I_p-rep}
\end{align}
where
$
\Psi_p^*(\widetilde{\tau}_2t,\widetilde{\tau}_2^p) =\sup_{s_1,s_2\in\mathbb{R}}
\left\{ s_1\widetilde{\tau_2}t+s_2\widetilde{\tau}_2^p-\Psi_p(s_1,s_2)\right\}.
$

By Lemmas 5.8 and 5.9 of~\cite{GanKimRam17}, $\Psi_p$ is essentially smooth, convex and lower semi-continuous; see Definition 2.3.5 of~\cite{DemZeiBook} for the definition of essential smoothness. Thus, by convexity, for $t, \tau\in\mathbb{R}$, when $\Psi^*_p(\tau t,\tau^p)<\infty$, there exist $s_i=s_i(\tau t,\tau^p)$, $i=1,2$, that attain the supremum in the definition of $\Psi^*_p(\tau t,\tau^p)$, i.e.,
\begin{align}\label{psi-star-rep}
\Psi^*_p(\tau t, \tau^p) = s_1\tau t+s_2\tau^p-\Psi_p(s_1,s_2),
\end{align}
where, by~\eqref{psi},
$
\Psi_p(s_1,s_2) = \int\Lambda_p(us_1,s_2)\gamma_2(du),
$
with $\gamma_2$ being the standard Gaussian measure and $\Lambda_p$ defined as in~\eqref{logmgf_lp}.
Note that $s_1,s_2$ satisfy the following first order conditions:
\begin{align*}
\tau t  = \partial_1\Psi_p(s_1,s_2)\qquad \text{and} \qquad
\tau^p  = \partial_2\Psi_p(s_1,s_2),
\end{align*}  
where $\partial_i$ represents the partial derivative with respect to $s_i$, for $i=1,2$.
From~\cite[Lemma 5.9]{GanKimRam17}, we can exchange the order of differentiation and integration to obtain
\begin{equation}\label{diff-psi}
\begin{aligned}
\partial_1\Psi_p(s_1,s_2) & = \int_\mathbb{R} u\partial_1\Lambda_p(us_1,s_2)\gamma_2(du), \\
\partial_2\Psi_p(s_1,s_2) & = \int_\mathbb{R} \partial_2\Lambda_p(us_1,s_2)\gamma_2(du).
\end{aligned} 
\end{equation}
To calculate these integrals, we first recall the expression for $\Lambda_p$ established in~\cite[Lemma 5.7]{GanKimRam17},  
\begin{equation}
  \label{lambdap-simple}
\Lambda_p(s_1,s_2) = -\frac{1}{p}\log(1-ps_2)+\log M_{\gamma_p}\left(\frac{s_1}{(1-ps_2)^{1/p}}\right),
\end{equation}
where $M_{\gamma_p}$ denotes the moment generating function of the measure $\gamma_p$ with density defined in~\eqref{pNormal}. Differentiation yields
\begin{equation}\label{diff-lambda}
\begin{aligned}
\partial_1\Lambda_p(us_1,s_2)&=\frac{M'_{\gamma_p}\left(\frac{us_1}{(1-ps_2)^{1/p}}\right)}{M_{\gamma_p}\left(\frac{us_1}{(1-ps_2)^{1/p}}\right)}\frac{1}{(1-ps_2)^{1/p}},\\
\partial_2\Lambda_p(us_1,s_2)&=\frac{1}{1-ps_2}  + \frac{M'_{\gamma_p}\left(\frac{us_1}{(1-ps_2)^{1/p}}\right)}{M_{\gamma_p}\left(\frac{us_1}{(1-ps_2)^{1/p}}\right)}\frac{us_1}{(1-ps_2)^{(p+1)/p}}.
\end{aligned}
\end{equation}
Combining all the  above relations, we obtain
\begin{align}
\tau t &= \int_\mathbb{R} \frac{M'_{\gamma_p}\left(\frac{us_1}{(1-ps_2)^{1/p}}\right)}{M_{\gamma_p}\left(\frac{us_1}{(1-ps_2)^{1/p}}\right)}\frac{u}{(1-ps_2)^{1/p}}\gamma_2(du)\label{s1},\\
\tau^p &= \int_\mathbb{R}\left( \frac{1}{1-ps_2}  + \frac{M'_{\gamma_p}\left(\frac{us_1}{(1-ps_2)^{1/p}}\right)}{M_{\gamma_p}\left(\frac{us_1}{(1-ps_2)^{1/p}}\right)}\frac{us_1}{(1-ps_2)^{(p+1)/p}}\right)\gamma_2(du)\nonumber\\
&=\frac{1}{1-ps_2}+\frac{\tau ts_1}{1-ps_2},  \label{s2}
\end{align}
and note that~\eqref{s2} implies
\begin{equation}\label{tau-t}
\tau^pps_2+\tau ts_1=\tau^p-1.
\end{equation}

Now, in view of \eqref{I_p-rep}, to compute $\mathbb{I}_{p}(t)$ we have to first take the derivative of $\Psi^*_p(\tau t,\tau^p)$ with respect to $\tau$ and set it to $0$. Note that in the following, $s_1,s_2$ are functions of $\tau$ and $t$ satisfying~\eqref{s1} and~\eqref{s2}. Using~\eqref{psi} and \eqref{I_p-rep}, we first
rewrite $\Psi_p(s_1,s_2)$ as
\begin{align*}
\Psi_p(s_1,s_2)&=\int_\mathbb{R} \Lambda_p(us_1,s_2)\gamma_2(du)\\
&=-\frac{1}{p}\log(1-ps_2)+\int_\mathbb{R}\log M_{\gamma_p}\left(\frac{us_1}{(1-ps_2)^{1/p}}\right)\gamma_2(du).
\end{align*}
From equations~\eqref{psi-star-rep}-\eqref{tau-t}, we obtain
\begin{align*}
\frac{d}{d\tau} \Psi^*_p(\tau t,\tau^p) & =\frac{d}{d\tau}\left(s_1\tau t+s_2\tau^p-\Psi_p(s_1,s_2)\right)\\
&=\frac{\partial s_1}{\partial\tau}\tau t + s_1 t +\frac{\partial s_2}{\partial\tau}\tau^p+ps_2\tau^{p-1}-\frac{\partial s_2}{\partial\tau}\frac{1}{1-ps_2}\\
&\quad-\int_\mathbb{R}\frac{M'_{\gamma_p}\left(\frac{us_1}{(1-ps_2)^{1/p}}\right)}{M_{\gamma_p}\left(\frac{us_1}{(1-ps_2)^{1/p}}\right)}\left[\frac{\partial s_1}{\partial\tau}\frac{u}{(1-ps_2)^{1/p}}+\frac{\partial s_2}{\partial\tau}\frac{us_1}{(1-ps_2)^{1/p+1}} \right]\gamma_2(du)\\
& = \frac{\partial s_1}{\partial\tau}\tau t + s_1 t +\frac{\partial s_2}{\partial\tau}\tau^p+ps_2\tau^{p-1}-\frac{\partial s_2}{\partial\tau}\frac{1}{1-ps_2}-\tau t \frac{\partial s_1}{\partial\tau}-\frac{s_1 \tau t}{1-ps_2}\frac{\partial s_2}{\partial\tau}\\
& = s_1t+\frac{\partial s_2}{\partial\tau}\frac{\tau^p(1-ps_2)-s_1\tau t-1}{1-ps_2} +ps_2\tau^{p-1}\\
& = s_1t+ps_2\tau^{p-1} \\
&= \tau^{p-1}-\frac{1}{\tau}.
\end{align*}
Setting the derivative computed above to $0$, we conclude that 
the minimum over $\tau>0$ in~\eqref{I_p-rep} is attained at $\tau=1$. Substituting this back into the  definition of $\mathbb{I}_{p}$, we conclude that 
$\mathbb{I}_{p}(t)=\Psi^*_p(t,1)$ which, along with \eqref{eq:Psi}, proves Lemma \ref{inf_ratefunc}. 
\end{proof}

\section{Proof of the  Central Limit Theorem for the empirical measure}
\label{subs-clt}

\begin{proof}[Proof of Lemma~\ref{clt_expansion}]
Let $(Z^n_j,j=1,\ldots,n)_{n\in\mathbb{N}}$ be independent standard Gaussian random variables. Then note that (e.g. see Section~\ref{subs-reform} or~\cite[Lemma 1]{Schechtman90})
\begin{equation}\label{direc-rep}
 \Theta^n_j \buildrel (d) \over = \frac{Z^n_j}{\norm{Z^n}},
\end{equation}
where we use $\norm{Z^n}=\norm{Z^n}_{n,2}$ to denote the Euclidean norm of the vector $Z^n := (Z^n_1,\cdots,Z^n_n)$. 

Since $F$ is a thrice continuously differentiable function, we may apply Taylor's theorem, for $x\in\mathbb{R}$ and $h>0$ to obtain
\begin{align*}
F(x+h)& = F(x) +F'(x)h+\frac{F''(x)}{2}h^2+\frac{F'''(\widetilde{x})}{6}h^3,
\end{align*}
for some $\widetilde{x}\in (x,x+h)$. With the expansion above, we obtain
\begin{align}
&\frac{1}{\sqrt{n}}\sum_{j=1}^n\left[F\left(\sqrt{n}\frac{Z^n_j}{\norm{Z^n}}\right) - \mathbb{E}\left[F(Z)\right]\right]\nonumber\\
& = \frac{1}{\sqrt{n}}\sum_{j=1}^n \left[ F(Z^n_j) -\mathbb{E}[F(Z)] + F'(Z^n_j)\left( \frac{\sqrt{n}Z^n_j}{\norm{Z^n}}-Z^n_j\right)+\frac{F''(Z^n_j)}{2}\left( \frac{\sqrt{n}Z^n_j}{\norm{Z^n}}-Z^n_j\right)^2\right.\nonumber\\
&\quad\quad\quad\quad\quad \left.+ \frac{F'''(\widetilde{Z}^n_j)}{6}\left( \frac{\sqrt{n}Z^n_j}{\norm{Z^n}}-Z^n_j\right)^3\right],\nonumber\\
& = \hat{r}_n(F)+\frac{1}{\sqrt{n}}\hat{s}_n(F)+\frac{1}{\sqrt{n}}\sum_{j=1}^n\frac{F'''(\widetilde{Z}^n_j)}{6}\left( \frac{\sqrt{n}Z^n_j}{\norm{Z^n}}-Z^n_j\right)^3\label{exp}
\end{align}
where $\hat{r}_n(\cdot)$ and $\hat{s}_n(\cdot)$ are defined in~\eqref{def-rn} and~\eqref{def-sn}, respectively, and 
$\widetilde{Z}^n_i\in\R$ lies between $Z^n_j$ and $\sqrt{n}Z^n_j/\norm{Z^n}$. 

In the following, the notation $o(1)$ means having order $o(1)$ in probability $\mathbb{P}$. We first show that the last term in~\eqref{exp} is of order $o(1/n)$ in probability. By assumption, $\abs{F'''}$ has polynomial growth, so there exist $q>0$ and $C <\infty$ such that
\[
\abs{F'''(t)}<C(1+\abs{t}^q), \quad \forall t\in\mathbb{R}.
\]
Therefore, for each $n\in\mathbb{N}$,
\begin{align*}
\sum_{j=1}^n\frac{\abs{F'''(\widetilde{Z}^n_j)}}{6}\left( \frac{\sqrt{n}Z^n_j}{\norm{Z^n}}-Z^n_j\right)^3 
& \leq \frac{C}{6} \sum_{j=1}^n\left(1+\abs{\widetilde{Z}^n_j}^q\right)\abs{ \frac{\sqrt{n}Z^n_j}{\norm{Z^n}}-Z^n_j}^3.
\end{align*}

Since $\widetilde{Z}^n_j$ lies between $Z^n_j$ and $\sqrt{n}Z^n_j/\norm{Z^n}$, and $\sqrt{n}/\norm{Z^n}$ converges to $1$ almost surely. For each $0<\bar{C}<\infty$, there exists $N=N(w)$ such that a.s. for all $n>N$,
\[
\abs{\widetilde{Z}^n_j}<\abs{Z^n_j}(1+\bar{C}).
\]
Combining the last two inequalities above, we obtain for some constant $C'<\infty$, and all $n>N$,
\begin{align*}
\sum_{j=1}^n\frac{\abs{F'''(\widetilde{Z}^n_j)}}{6}\left( \frac{\sqrt{n}Z^n_j}{\norm{Z^n}}-Z^n_j\right)^3 
& \leq C' \sum_{j=1}^n\left(1+\abs{Z^n_j}^q\right)\abs{ \frac{\sqrt{n}Z^n_j}{\norm{Z^n}}-Z^n_j}^3 \\
& = C' \frac{\abs{\norm{Z^n}-\sqrt{n}}^3}{\sqrt{n}}\frac{n^{3/2}}{\norm{Z^n}^3}\left[\frac{1}{n}\sum_{j=1}^n\abs{Z^n_j}^3(1+\abs{Z^n_j}^q)\right].
\end{align*}

From the Gaussian concentration inequality (see~\cite[Theorem 3.1.1]{Vershynin18}), there exists a universal constant $c$ such that for $\delta >0$,
\[
\mathbb{P}\left ( \abs{\norm{Z^n}-\sqrt{n}}>\delta \right) \leq 2e^{-c\delta^2},
\]
Given $\epsilon > 0$, we have
\begin{align}
\mathbb{P}\left( \frac{1}{\sqrt{n}} \abs{\norm{Z^n}-\sqrt{n}}^3>\epsilon \right) 
& = \mathbb{P}\left( \abs{\norm{Z^n}-\sqrt{n}}> n^{1/6}\epsilon^{1/3} \right)\nonumber \\
& \leq 2 e^{-c\epsilon^{2/3}n^{1/3}}\nonumber\\
&\to 0, \quad \text{as}\quad n\to \infty.\label{op1}
\end{align}

On the other hand, since $(Z^n_j)_{j=1,\ldots,n}$ are independent, by the strong law of large numbers for triangular arrays, as $n$ tends to infinity, almost surely 
\begin{align} \label{op2}
\frac{1}{n}\sum_{j=1}^n\abs{Z^n_j}^3(1+\abs{Z^n_j}^q)\to\mathbb{E}\left[\abs{Z}^3(1+\abs{Z}^q)\right].
\end{align}
Similarly, the strong law of large numbers also ensures that as $n$ tends to infinity,
\begin{equation}\label{op3}
\frac{\norm{Z^n}}{\sqrt{n}}\to 1,\quad \text{a.s.}
\end{equation}
Together,~\eqref{op1},~\eqref{op2} and~\eqref{op3} show that
\[
\sum_{j=1}^n\frac{\abs{F'''(\widetilde{Z}^n_j)}}{6}\left( \frac{\sqrt{n}Z^n_j}{\norm{Z^n}}-Z^n_j\right)^3 =o(1).
\]
We may then rewrite~\eqref{exp} as follows:
\begin{align}\label{exp2}
&\frac{1}{\sqrt{n}}\left(\sum_{j=1}^nF\left(\sqrt{n}\frac{Z^n_j}{\norm{Z^n}}\right) - \mathbb{E}\left[F(Z)\right]\right)=\hat{r}_n(F) + \frac{1}{\sqrt{n}}\hat{s}_n(F)+o\left(\frac{1}{\sqrt{n}}\right).
\end{align}

Due to the assumption that $F'''$, $G_1''$ and $G_2''$ all have polynomial growth, the variances of $F(Z)$, $F'(Z)Z$, $F''(Z)Z^2$, $G_1(Z)$, $G_1'(Z)Z$, $G_2(Z)$ and $G_2'(Z)Z$ are all finite. Define sequences $(\mathfrak{A}_n)$, $(\mathfrak{B}_n)$, $(\mathfrak{C}_n)$, $(\mathfrak{D}_n)$, $(\mathfrak{E}_n)$, $(\mathfrak{F}_n)$, $(\mathfrak{G}_n)$ and $(\mathfrak{H}_n)$ as follows:
\begin{align*}
\mathfrak{A}_n & := \frac{1}{\sqrt{n}} \sum_{j=1}^n \left( F(Z^n_j) -\mathbb{E}[F(Z)]\right), 
\quad &
\mathfrak{B}_n  & := \frac{1}{\sqrt{n}} \sum_{j=1}^n \left( F'(Z^n_j)Z^n_j -\mathbb{E}[F'(Z)Z]\right),\\
\mathfrak{C}_n & := \frac{1}{\sqrt{n}} \sum_{j=1}^n \left( F''(Z^n_j)(Z^n_j)^2 -\mathbb{E}[F''(Z)Z^2]\right),
\quad &
\mathfrak{D}_n & := \frac{1}{\sqrt{n}} \sum_{j=1}^n \left(\abs{Z^n_j}^2-1\right),\\
\mathfrak{E}_n &:= \frac{1}{\sqrt{n}} \sum_{j=1}^n \left( G_1(Z^n_j) -\mathbb{E}[G_1(Z)]\right), 
\quad &
\mathfrak{F}_n  &: = \frac{1}{\sqrt{n}} \sum_{j=1}^n \left( G_1'(Z^n_j)Z^n_j -\mathbb{E}[G_1'(Z)Z]\right),\\
\mathfrak{G}_n &:= \frac{1}{\sqrt{n}} \sum_{j=1}^n \left( G_2(Z^n_j) -\mathbb{E}[G_2(Z)]\right), 
\quad &
\mathfrak{H}_n  &: = \frac{1}{\sqrt{n}} \sum_{j=1}^n \left( G_2'(Z^n_j)Z^n_j -\mathbb{E}[G_2'(Z)Z]\right).
\end{align*}
By the multivariate central limit theorem, $(\mathfrak{A}_n,\mathfrak{B}_n,\mathfrak{C}_n,\mathfrak{D}_n,\mathfrak{E}_n,\mathfrak{F}_n,\mathfrak{G}_n,\mathfrak{H}_n)$ converges in distribution to a jointly Gaussian random vector $M:=(\mathfrak{A},\mathfrak{B},\mathfrak{C},\mathfrak{D},\mathfrak{E},\mathfrak{F},\mathfrak{G},\mathfrak{H})$ in $\mathbb{R}^8$ with mean $0$ and covariance matrix 
\begin{align}
\left(\widetilde{\Sigma}\right)_{ij} := \mathrm{Cov}(M_i,M_j),\quad \text{for}\quad i,j=1,\ldots,6,
\end{align}
where
\[
\left(M_1,M_2,M_3,M_4,M_5,M_6,M_7,M_8\right) := \left(F(Z),F'(Z)Z,F''(Z)Z^2,Z^2,G_1(Z),G_1'(Z)Z,G_2(Z),G_2'(Z)Z \right).
\]

By the Skorokhod representation theorem, we can find $(\widetilde{\mathfrak{A}}_n,\widetilde{\mathfrak{B}}_n,\widetilde{\mathfrak{C}}_n,\widetilde{\mathfrak{D}}_n,\widetilde{\mathfrak{E}}_n,\widetilde{\mathfrak{F}}_n,\widetilde{\mathfrak{G}}_n,\widetilde{\mathfrak{H}}_n)$ and $\widetilde{M}:=(\widetilde{\mathfrak{A}},\widetilde{\mathfrak{B}},\widetilde{\mathfrak{C}},\widetilde{\mathfrak{D}},\widetilde{\mathfrak{E}},\widetilde{\mathfrak{F}},\widetilde{\mathfrak{G}},\widetilde{\mathfrak{H}})$ all defined on some common probability space, such that
\[
(\mathfrak{A}_n,\mathfrak{B}_n,\mathfrak{C}_n,\mathfrak{D}_n,\mathfrak{E}_n,\mathfrak{F}_n,\mathfrak{G}_n,\mathfrak{H}_n,M) \buildrel (d) \over =(\widetilde{\mathfrak{A}}_n,\widetilde{\mathfrak{B}}_n,\widetilde{\mathfrak{C}}_n,\widetilde{\mathfrak{D}}_n,\widetilde{\mathfrak{E}}_n,\widetilde{\mathfrak{F}}_n,\widetilde{\mathfrak{G}}_n,\widetilde{\mathfrak{H}}_n,\widetilde{M}),
\]
and
\begin{equation}
  \label{sk-conv}
(\widetilde{\mathfrak{A}}_n,\widetilde{\mathfrak{B}}_n,\widetilde{\mathfrak{C}}_n,\widetilde{\mathfrak{D}}_n,\widetilde{\mathfrak{E}}_n,\widetilde{\mathfrak{F}}_n,\widetilde{\mathfrak{G}}_n,\widetilde{\mathfrak{H}}_n)\to \widetilde{M}\text{ a.s.}
\end{equation}

Now, we substitute $(\widetilde{\mathfrak{A}}_n,\widetilde{\mathfrak{B}}_n,\widetilde{\mathfrak{C}}_n,\widetilde{\mathfrak{D}}_n,\widetilde{\mathfrak{E}}_n,\widetilde{\mathfrak{F}}_n,\widetilde{\mathfrak{G}}_n,\widetilde{\mathfrak{H}}_n)$ into~\eqref{exp2}, and we first take care of $r_n$
\begin{align*}
\hat{r}_n(F) & =\frac{1}{\sqrt{n}}\sum_{j=1}^n \left[ F(Z^n_j) -\mathbb{E}[F(Z)] + F'(Z^n_j)\left( \frac{\sqrt{n}Z^n_j}{\norm{Z^n}}-Z^n_j\right)\right]\\
&\buildrel (d) \over = \frac{1}{\sqrt{n}} \left( \sqrt{n}\widetilde{\mathfrak{A}}_n +(\sqrt{n}\widetilde{\mathfrak{B}}_n+n\mathbb{E}[F'(Z)Z])\frac{\sqrt{n}-(\sqrt{n}\widetilde{\mathfrak{D}}_n+n)^{1/2}}{(\sqrt{n}\widetilde{\mathfrak{D}}_n+n)^{1/2}}\right) \\
& = \widetilde{\mathfrak{A}}_n+\sqrt{n}\left( \mathbb{E}[F'(Z)Z]+\frac{\widetilde{\mathfrak{B}}_n}{\sqrt{n}}\right)\left( \frac{1-(1+\widetilde{\mathfrak{D}}_n/\sqrt{n})^{1/2}}{(1+\widetilde{\mathfrak{D}}_n/\sqrt{n})^{1/2}}\right)\\
& = \widetilde{\mathfrak{A}}_n+\sqrt{n}H_1\left( \frac{\widetilde{\mathfrak{B}}_n}{\sqrt{n}},\frac{\widetilde{\mathfrak{D}}_n}{\sqrt{n}}\right),
\end{align*}
where $H_1:\mathbb{R}^2\to\mathbb{R}$ is the mapping
\[
H_1(x,y) := \left(\mathbb{E}[F'(Z)Z]+x\right)\frac{1-(1+y)^{1/2}}{(1+y)^{1/2}}.
\]
Since $\widetilde{\mathfrak{B}}_n/{\sqrt{n}}$ and $\widetilde{\mathfrak{D}}_n/{\sqrt{n}}$ converge to $0$ almost surely by \eqref{sk-conv}, we consider the Taylor expansion of $H_1$ at $(0,0)$:
\begin{align*}
H_1(x,y) &= \left.\frac{1-(1+y)^{1/2}}{(1+y)^{1/2}}\right|_{(x,y)=(0,0)}x\\
&\quad+\left.\left(\mathbb{E}[F'(Z)Z]+x\right)\frac{-1}{2(1+y)^{3/2}}\right|_{(x,y)=(0,0)}y \\
&\quad+O(x^2+y^2) \\
& = -\frac{y}{2}\mathbb{E}[F'(Z)Z]+O(x^2+y^2).
\end{align*}

Combining the last three displays, we obtain
\begin{align*}
\hat{r}_n(F)
& \buildrel (d) \over= \widetilde{\mathfrak{A}}_n+\sqrt{n}\left(-\frac{\widetilde{\mathfrak{D}}_n}{2\sqrt{n}}\mathbb{E}[F'(Z)Z]+O\left( \frac{\widetilde{\mathfrak{B}}_n^2}{n}+\frac{\widetilde{\mathfrak{D}}_n^2}{n}\right )\right)\\
& = \widetilde{\mathfrak{A}}_n-\frac{1}{2}\mathbb{E}[F'(Z)Z]\widetilde{\mathfrak{D}}_n+\mathbb{E}[F'(Z)Z]O\left( \frac{\widetilde{\mathfrak{B}}_n^2}{\sqrt{n}}+\frac{\widetilde{\mathfrak{D}}_n^2}{\sqrt{n}}\right ).
\end{align*}
By the a.s. convergence, $(\widetilde{\mathfrak{A}}_n,\widetilde{\mathfrak{D}}_n)\to(\widetilde{\mathfrak{A}},\widetilde{\mathfrak{D}})$, we see that as $n$ tends to infinity,
\[
\frac{\widetilde{\mathfrak{B}}_n^2}{\sqrt{n}}+\frac{\widetilde{\mathfrak{D}}_n^2}{\sqrt{n}}\to 0, \quad \text{a.s.}
\]
Applying Slutsky's lemma and the almost sure convergence above, we obtain
\begin{align}\label{rn-limit}
\hat{r}_n(F)& \Rightarrow \widetilde{\mathfrak{A}}-\frac{1}{2}\mathbb{E}[F'(Z)Z]\widetilde{\mathfrak{D}},
\end{align}
as $n\to\infty$.

Similarly, for $s_n$ we have
\begin{align*}
\hat{s}_n(F) & = \frac{1}{2}\sum_{j=1}^nF''(Z^n_j)(Z^n_j)^2\left( \frac{\sqrt{n}}{\norm{Z^n}}-1\right)^2 \\
& = \frac{1}{2}n\left(\mathbb{E}[F''(Z)Z^2]+\frac{\mathfrak{C}_n}{\sqrt{n}}\right)\left(\frac{1}{(1+\mathfrak{D}_n/\sqrt{n})^{1/2}}-1\right)^2 \\
& \buildrel (d) \over = \frac{1}{2}nH_2\left( \frac{\widetilde{\mathfrak{C}}_n}{\sqrt{n}}, \frac{\widetilde{\mathfrak{D}}_n}{\sqrt{n}}\right),
\end{align*}
where $H_2:\mathbb{R}^2\to\mathbb{R}$ is the mapping
\[
H_2(x,y) := \left(\mathbb{E}[F''(Z)Z^2]+x\right)\left(\frac{1}{(1+y)^{1/2}}-1\right)^2,\quad (x,y)\in\mathbb{R}^2.
\]
Note that $\widetilde{\mathfrak{C}}_n/{\sqrt{n}}$ and $\widetilde{\mathfrak{D}}_n/{\sqrt{n}}$ converge to $0$ almost surely by \eqref{sk-conv}.
 We now apply the Taylor expansion to $H_2$ at $(0,0)$ and obtain
\begin{align*}
H_2(x,y) = \frac{1}{4}\mathbb{E}[F''(Z)Z^2]y^2+O(x^3+y^3).
\end{align*}
With the above expansion for $H_2$, we write
\begin{align}
\hat{s}_n(F)
&\buildrel (d) \over = \frac{1}{8}\mathbb{E}[F''(Z)Z^2]\widetilde{\mathfrak{D}}_n^2+O\left(\frac{\widetilde{\mathfrak{C}}_n^3}{\sqrt{n}}+\frac{\widetilde{\mathfrak{D}}_n^3}{\sqrt{n}}\right)\nonumber\\
&\Rightarrow \frac{1}{8}\mathbb{E}[F''(Z)Z^2]\widetilde{\mathfrak{D}}^2,\label{sn-limit}
\end{align}
as $n$ tends to infinity, which holds since $\widetilde{\mathfrak{D}}_n\to\widetilde{\mathfrak{D}}$ almost surely.
This completes the analysis of the expansion for $F$. Fix $i =1, 2$, we next consider the expansion for $G_i$.
Following the same method, we can write
\begin{align*}
&\sqrt{n}\left[\frac{1}{n}\sum_{j=1}^n G_i\left(\sqrt{n}\frac{Z^n_j}{\norm{Z^n}}\right)-\mathbb{E}[G_i(Z)]\right] \\
&\quad = \frac{1}{\sqrt{n}}\sum_{j=1}^n\left[G_i(Z^n_j)-\mathbb{E}[G_i(Z)]+G_i'(Z^n_j)\left(\frac{\sqrt{n}Z^n_j}{\norm{Z^n}}-Z^n_j\right)+\frac{1}{2}G_i''(\widetilde{Z}^n_i)\left(\frac{\sqrt{n}Z^n_j}{\norm{Z^n}}-Z^n_j\right)^2\right].
\end{align*}

Again by assumption, $G_i''$ has polynomial growth, and thus the last term is of order $o(1)$. Hence, we may rewrite the terms above as follows:
\begin{align}
&\sqrt{n}\left[\frac{1}{n}\sum_{j=1}^n G_i\left(\sqrt{n}\frac{Z^n_j}{\norm{Z^n}}\right)-\mathbb{E}[G_i(Z)]\right] \nonumber\\
&\quad = \frac{1}{\sqrt{n}}\sum_{j=1}^n\left[G_i(Z^n_j)-\mathbb{E}[G_i(Z)]+G_i'(Z^n_j)\left(\frac{\sqrt{n}Z^n_j}{\norm{Z^n}}-Z^n_j\right)\right] +o(1) \nonumber\\
&\quad =\hat{r}_n(G_i)+o(1).\label{rn-limit2}
\end{align}

Thus, the expansion in Lemma~\ref{clt_expansion} follows from~\eqref{direc-rep},~\eqref{rn-limit}~\eqref{sn-limit} and~\eqref{rn-limit2}. The second assertion of the lemma is a consequence of~\eqref{rn-limit},~\eqref{sn-limit}, the analog of~\eqref{rn-limit} with $F$ replaced with $G_i$ and the joint convergence of $(\widetilde{\mathfrak{A}}_n,\widetilde{\mathfrak{D}}_n,\widetilde{\mathfrak{E}}_n)\Rightarrow(\widetilde{\mathfrak{A}},\widetilde{\mathfrak{D}},\widetilde{\mathfrak{E}})$ and $(\widetilde{\mathfrak{A}}_n,\widetilde{\mathfrak{D}}_n,\widetilde{\mathfrak{G}}_n)\Rightarrow(\widetilde{\mathfrak{A}},\widetilde{\mathfrak{D}},\widetilde{\mathfrak{G}})$.
\end{proof}

\section{Proof of Lemma~\ref{growth_lmgf}}\label{pf_growth}
  
\begin{proof}
For $p=2$,  $\gamma_2$ is the standard Gaussian, $\log M_{\gamma_2}(t)=t^2/2$ and so the lemma follows.

Next, we consider the case $p>2$.
Let $Y$ be a generalized $p$-Gaussian random variable with density as in~\eqref{pNormal}. The moments of $Y$ are given in~\cite{Nadarajah05} by
\begin{equation}\label{moments}
\mathbb{E}\left[Y^m \right] = 
\begin{cases}
0, \quad $m$ \text{ odd,}\\
\frac{p^{m/p}\Gamma\left(\frac{m+1}{p}\right)}{\Gamma\left( \frac{1}{p}\right)}, \quad $m$ \text{ even}.
\end{cases}
\end{equation}

Note that $d\log M_{\gamma_p}(t)/dt=\mathbb{E}[Ye^{tY}]/\mathbb{E}[e^{tY}]$, and for each $k>1$, $d^k\log M_{\gamma_p}(t)/dt^k$ is a linear combination of products of functions the form
\[
t\mapsto\frac{\mathbb{E}\left[Y^ne^{tY} \right]}{\mathbb{E}\left[e^{tY} \right]} ,\quad \text{for} \quad n=1,\ldots,k.
\]
Therefore, we only need to show that these functions have at most polynomial growth. The case when $k=0$ then follows since the derivative of $\log M_{\gamma_p}$ has polynomial growth, thus, $\log M_{\gamma_p}$ also has polynomial growth.

We first consider the case when $n$ is odd and the case when $n$ is even can be deduced analogously. Note that for $t\in\R$ and $n$ odd,
\begin{align*}
\frac{\mathbb{E}\left[Y^ne^{tY} \right]}{\mathbb{E}\left[e^{tY} \right]} 
&= \frac{\sum_{m=0}^\infty t^{2m+1}\frac{(p^{1/p})^{2m+1+n}\Gamma\left(\frac{2m+2+n}{p} \right)}{\Gamma\left(2m+1+n\right)}}{\sum_{m=0}^\infty t^{2m}\frac{(p^{1/p})^{2m}\Gamma\left(\frac{2m+1}{p} \right)}{\Gamma\left(2m+1\right)}}  \\
&\leq  \frac{\sum_{m=0}^\infty t^{2m+1}\frac{(p^{1/p})^{2m+1+n}\Gamma\left(\frac{2m+2+n}{p} \right)}{\Gamma\left(2m+1+n\right)}}{\sum_{m=n'}^\infty t^{2m}\frac{(p^{1/p})^{2m}\Gamma\left(\frac{2m+1}{p} \right)}{\Gamma\left(2m+1\right)}}  \\
& =  \frac{\sum_{m=0}^\infty t^{2m+1}\frac{(p^{1/p})^{2m+1+n}\Gamma\left(\frac{2m+2+n}{p} \right)}{\Gamma\left(2m+1+n\right)}}{\sum_{m=0}^\infty t^{2m+2n'}\frac{(p^{1/p})^{2m+2n'}\Gamma\left(\frac{2m+2n'+1}{p} \right)}{\Gamma\left(2m+2n'+1\right)}}.
\end{align*}
Pick $n' = (n-1)/2$ to obtain
\begin{align*}
\frac{\mathbb{E}\left[Y^ne^{tY} \right]}{\mathbb{E}\left[e^{tY} \right]}  
&\leq \frac{t}{t^{n-1}}\times\frac{\sum_{m=0}^\infty t^{2m}\frac{(p^{1/p})^{2m+1+n}\Gamma\left(\frac{2m+2+n}{p} \right)}{\Gamma\left(2m+1+n\right)}}{\sum_{m=0}^\infty t^{2m}\frac{(p^{1/p})^{2m+n-1}\Gamma\left(\frac{2m+n}{p} \right)}{\Gamma\left(2m+n\right)}}.
\end{align*}

Now, note that for each $m\in\mathbb{N}\cup \{0\}$,
\begin{align*}
\frac{\frac{(p^{1/p})^{2m+1+n}\Gamma\left(\frac{2m+2+n}{p} \right)}{\Gamma\left(2m+2\right)}}{\frac{(p^{1/p})^{2m+n-1}\Gamma\left(\frac{2m+n}{p} \right)}{\Gamma\left(2m+n\right)}} = \frac{p^{2/p}}{2m+1+n}\frac{\Gamma\left(\frac{2m+2+n}{p}\right)}{\Gamma\left(\frac{2m+1+n}{p}\right)}\leq (2m+1+n)^{\frac{2}{p}-1} \leq 1,
\end{align*}
where the second to last inequality is due to Wendel~\cite[Equation 7]{Wendel48}. Thus, we have shown that $\mathbb{E}[Y^ne^{tY}]/\mathbb{E}[e^{tY}]\leq t^{2-n}$, which has at most linear growth for any $n\in\N$.

Lastly, we turn to the case when $1<p<2$. We simply demonstrate the case $k=1$, the general result can be deduced using similarly. Again, we start with $k=1$, and for general $k\in\mathbb{N}\cup \{0\}$, the result can be deduced using the same technique as in the case $p>2$. 

In view of~\eqref{moments} we have for $t\in\R$,
\begin{align*}
\frac{d}{dt} \log M_{\gamma_p}(t)
&= \frac{\mathbb{E}\left[Ye^{tY} \right]}{\mathbb{E}\left[e^{tY} \right]} \\
&= \frac{\sum_{m=0}^\infty t^{2m+1}\frac{\mathbb{E}[Y^{2m+2}]}{(2m+1)!}}{\sum_{m=0}^\infty t^{2m}\frac{\mathbb{E}[Y^{2m}]}{(2m)!}}\\
&= \frac{\sum_{m=0}^{n-1} t^{2m+1}\frac{(p^{1/p})^{2m+2}\Gamma\left(\frac{2m+3}{p} \right)}{\Gamma\left(2m+2\right)}+\sum_{m=n}^\infty t^{2m+1}\frac{(p^{1/p})^{2m+2}\Gamma\left(\frac{2m+3}{p} \right)}{\Gamma\left(2m+2\right)}}{\sum_{m=0}^\infty t^{2m}\frac{(p^{1/p})^{2m}\Gamma\left(\frac{2m+1}{p} \right)}{\Gamma\left(2m+1\right)}} \\
& \leq \sum_{m=1}^{n-1} t^{2m+1}\frac{(p^{1/p})^{2m+2}\Gamma\left(\frac{2m+3}{p} \right)}{\Gamma\left(2m+2\right)} +\frac{\sum_{m=n}^\infty t^{2m+1}\frac{(p^{1/p})^{2m+2}\Gamma\left(\frac{2m+3}{p} \right)}{\Gamma\left(2m+2\right)}}{\sum_{m=0}^\infty t^{2m}\frac{(p^{1/p})^{2m}\Gamma\left(\frac{2m+1}{p} \right)}{\Gamma\left(2m+1\right)}}\\
&= \sum_{m=1}^{n-1} t^{2m+1}\frac{(p^{1/p})^{2m+2}\Gamma\left(\frac{2m+3}{p} \right)}{\Gamma\left(2m+2\right)} +t^{2n+1}\frac{\sum_{m=0}^\infty t^{2m}\frac{(p^{1/p})^{2m+2n+2}\Gamma\left(\frac{2m+2n+3}{p} \right)}{\Gamma\left(2m+2n+2\right)}}{\sum_{m=0}^\infty t^{2m}\frac{(p^{1/p})^{2m}\Gamma\left(\frac{2m+1}{p} \right)}{\Gamma\left(2m+1\right)}},
\end{align*}
where the inequality follows from $\mathbb{E}[e^{tY}]\geq 1$, which is due to Jensen's inequality. 

To conclude the proof of the lemma, it suffices to show that there exists $n\in\mathbb{N}$ such that for all $m\in\mathbb{N}\cup\{0\}$
\[
\frac{\frac{(p^{1/p})^{2m+2n+2}\Gamma\left(\frac{2m+2n+3}{p} \right)}{\Gamma\left(2m+2n+2\right)}}{\frac{(p^{1/p})^{2m}\Gamma\left(\frac{2m+1}{p} \right)}{\Gamma\left(2m+1\right)}} = p^{(2n+2)/p}\frac{\Gamma\left((2m+2n+3)/p \right)\Gamma\left(2m+1 \right)}{\Gamma\left((2m+1)/p \right) \Gamma\left(2m+2n+2 \right)} \leq 1.
\]
To this end,
pick $a,b\in\mathbb{N}$ such that the following inequalities hold:
\begin{align}
(2m+1)\left(1-\frac{1}{p}\right)-1&<a<(2m+1)\left(1-\frac{1}{p}\right);\nonumber
\\
\quad (2m+2n)\left(1-\frac{1}{p}\right)-\frac{3}{p}&<b<(2m+2n)\left(1-\frac{1}{p}\right)-\frac{3}{p}+1. \label{prop-ab}
\end{align}
Then we have the inequality
\begin{align} \label{prop-b-a}
2n-\frac{2n}{p}-1-\frac{2}{p} < b-a < 2n-\frac{2n}{p}+1-\frac{2}{p}.
\end{align}
Now we use the identity $\Gamma(z+1)=z\Gamma(z)$ and the chosen $a,b$ above to obtain
\begin{align*}
& p^{\frac{2n+2}{p}}\frac{\Gamma\left((2m+2n+3)/p \right)\Gamma\left(2m+1 \right)}{\Gamma\left((2m+1)/p \right) \Gamma\left(2m+2n+2 \right)} \\
 & \quad=p^{\frac{2n+2}{p}}\frac{\Gamma(2m+1)\left(\frac{2m+1}{p}\right)\cdots\left(\frac{2m+1}{p}+a\right)}{\Gamma\left(\frac{2m+1}{p}+a+1 \right)}\frac{\Gamma\left(\frac{2m+2n+3}{p}+b+1 \right)}{\Gamma(2m+2n+2)\left(\frac{2m+2n+3}{p}\right)\cdots\left(\frac{2m+2n+3}{p}+b\right)} \\
 & \quad\leq p^{2n+1}\frac{(2m+1)\cdots(2m+1+ap)}{(2m+2n+3)\cdots(2m+2n+3+bp)},
 \end{align*}
 where the inequality follows from \eqref{prop-ab} and \eqref{prop-b-a}. We further see that 
  \begin{align*}
 &p^{\frac{2n+2}{p}}\frac{\Gamma\left((2m+2n+3)/p \right)\Gamma\left(2m+1 \right)}{\Gamma\left((2m+1)/p \right) \Gamma\left(2m+2n+2 \right)}\\
 & \quad \leq p^{2n+1}\frac{2m+1}{2m+2n+3}\cdots\frac{2m+1+ap}{2m+2n+3+ap}\frac{1}{(2m+2n+3+(a+1)p)\cdots(2m+2n+3+bp)}\\
 &\quad \leq  p^{2n+1}\frac{1}{(2n+3+(a+1)p)\cdots(2n+3+bp)}\\
  &\quad \leq  p^{2n+1}\left(\frac{1}{(2n+3+(a+1)p)}\right)^{b-a}\\
  &\quad \leq p\left(\frac{p}{(2n+3+(a+1)p)^{1-\frac{1}{p}}} \right)^{2n}\left(\frac{1}{2n+3+(a+1)p)} \right)^{1-\frac{2}{p}}
\end{align*}
which tends to zero as $n$ tends to infinity, uniformly in $m$. This concludes the proof of the lemma.
\end{proof}

\section{Geometric information in sharp large deviation estimates}\label{subs-geoinfo}
Fix $p\in (1,\infty)$ and $n\in\mathbb{N}$. We now demonstrate how sharp large deviation estimates encode geometric properties of the underlying high-dimensional measure. First observe from the estimate in~\eqref{tail_prob} that the leading order term that depends on $\theta$ is $R^n_a(\theta^n)$, which, in turn, depends on $\theta$ only through $\Psi^n_{p,\theta}(\lambda_a)$, as evident from its definition in ~\eqref{Rn}. From the definitions in~\eqref{logmgf_lp},~\eqref{lambda_x}and~\eqref{psin_p}, we have
\begin{equation}\label{first-term-Rn}
\Psi^n_{p,\theta}(\lambda_a) = \frac{1}{n}\sum_{j=1}^n\Lambda_p\left(\sqrt{n}\theta^n_j\lambda_{a,1},\lambda_{a,2} \right),
\end{equation}
where we suppress the $\theta^n$ dependence in $\Psi^n_{p,\theta}$.
We first state a lemma regarding the properties of $\Lambda_p$ in~\cite{GanKimRam17}.
\begin{lemma}~\cite[Lemma 7.5]{GanKimRam17} \label{con-Lambdap}
Let $p\in(1,\infty)$ and $t_2<1/p$. The map $\mathbb{R}_+\ni t_1\mapsto\Lambda_p(\sqrt{t_1},t_2)$ is concave but not linear for $p>2$, linear for $p=2$ and convex but not linear for $p<2$.
\end{lemma}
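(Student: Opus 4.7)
The plan is to reduce Lemma~\ref{con-Lambdap} to a one-variable question about $h(t) := \log M_{\gamma_p}(\sqrt{t})$ for $t \geq 0$, and then handle the three regimes separately. By~\eqref{lambda-est}, for fixed $t_2 < 1/p$ the map $t_1 \mapsto \Lambda_p(\sqrt{t_1}, t_2)$ differs from $t_1 \mapsto \log M_{\gamma_p}(c\sqrt{t_1})$ only by an additive constant, where $c := (1-pt_2)^{-1/p} > 0$; since positive rescaling of the $t_1$-axis preserves convexity type, it suffices to classify $h$. Writing $g(s) := \log M_{\gamma_p}(s)$ (even in $s$) and applying the chain rule with $s = \sqrt{t}$ yields
\[
h''(t) = \frac{1}{4 t^{3/2}}\bigl(s g''(s) - g'(s)\bigr) = \frac{s^2}{4 t^{3/2}} \frac{d}{ds}\!\left(\frac{g'(s)}{s}\right).
\]
Hence, setting $\phi(s) := g'(s)/s$, extended by evenness to $\phi(0) = g''(0) = \Var(Y)$, the function $h$ is strictly concave (resp.\ linear, resp.\ strictly convex) on $(0,\infty)$ iff $\phi$ is strictly decreasing (resp.\ constant, resp.\ strictly increasing) there.

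The case $p = 2$ is immediate: $M_{\gamma_2}(s) = e^{s^2/2}$ gives $\phi \equiv 1$, so $h(t) = t/2$ is linear. For $p \in (1, 2)$, I would invoke the classical Gaussian scale mixture representation of $\gamma_p$---valid precisely because $x \mapsto e^{-x^{p/2}/p}$ is completely monotone on $[0,\infty)$ for $p \leq 2$ by Bernstein's theorem---which produces a non-degenerate positive random variable $T_p$ with $Y \overset{d}{=} R \sqrt{T_p}$, where $R \sim N(0,1)$ is independent of $T_p$. Conditioning on $T_p$ yields $M_{\gamma_p}(s) = \mathbb{E}[e^{s^2 T_p / 2}] = M_{T_p}(s^2/2)$, so
\[
h(t) = \log M_{T_p}(t/2),
\]
a cumulant generating function evaluated at a positive linear rescaling of its argument, and therefore convex. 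Strict convexity (hence ``not linear'') follows from the non-degeneracy of $T_p$ for $p < 2$.

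The main obstacle is $p > 2$, where the Gaussian scale mixture is unavailable. My plan here is to work directly on the exponentially tilted measure with density $e^{sy - g(s)} f_p(y)$; writing $\mathbb{E}_s$ for its expectation and integrating by parts against $f_p'(y)/f_p(y) = -\text{sgn}(y)|y|^{p-1}$ produces the identities
\[
\mathbb{E}_s[\text{sgn}(Y)|Y|^{p-1}] = s, \qquad \mathbb{E}_s[|Y|^p] = 1 + s g'(s),
\]
and, applied to the test function $y \mapsto (y - g'(s))^2/2$,
\[
s\,\Var_s(Y) = \mathbb{E}_s\bigl[\text{sgn}(Y)|Y|^{p-1}(Y - g'(s))^2\bigr].
\]
Thus the desired inequality $s g''(s) < g'(s)$ reduces to $\mathbb{E}_s[\text{sgn}(Y)|Y|^{p-1}(Y-m)^2] < m$ where $m := \mathbb{E}_s[Y]$. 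I would exploit that $y \mapsto \text{sgn}(y)|y|^{p-1}$ is convex on each half-line precisely for $p > 2$ (switching direction at $p = 2$), together with the Brascamp-Lieb variance inequality for the strictly log-concave tilted density, to control the right-hand side. The delicate point is handling the contribution of the negative tail of the tilted measure, which is exponentially suppressed as $s \to \infty$ but not uniformly on $(0,\infty)$; I would bridge this via the small-$s$ expansion $s g''(s) - g'(s) = (\kappa_4/3) s^3 + O(s^5)$, where $\kappa_4 = m_4 - 3 m_2^2 < 0$ for $p > 2$ by a direct Gamma-function computation using $m_{2k} = p^{2k/p}\Gamma((2k+1)/p)/\Gamma(1/p)$ and the inequality $\Gamma(1/p)\Gamma(5/p) < 3\Gamma(3/p)^2$ (which holds strictly for $p > 2$, with equality at $p = 2$), combined with the large-$s$ regime, where Laplace asymptotics around $y^* = s^{1/(p-1)}$ give $\phi(s) \sim s^{(2-p)/(p-1)} \to 0$, through a continuity and monotonicity argument on $\phi$.
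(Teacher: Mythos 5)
The paper does not prove this lemma; it simply cites Lemma 7.5 of \cite{GanKimRam17}, so there is no internal argument to compare against. Judged on its own terms, your reduction to the sign of $\frac{d}{ds}\bigl(g'(s)/s\bigr)$, where $g=\log M_{\gamma_p}$, is correct and so is the chain-rule computation. The $p=2$ case is immediate, and the $p\in(1,2)$ case via the Gaussian scale-mixture representation is a genuinely clean argument: complete monotonicity of $x\mapsto e^{-x^{p/2}/p}$ (Bernstein) gives $Y\overset{d}{=}R\sqrt{T_p}$ with $T_p$ non-degenerate for $p<2$, so $h(t)=\log M_{T_p}(t/2)$ is the logarithmic moment generating function of a non-degenerate positive random variable and hence strictly convex. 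That half of the lemma is solid.

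The $p>2$ case, however, has a real gap. You correctly set up the integration-by-parts identities and reduce the claim to $\mathbb{E}_s\bigl[\mathrm{sgn}(Y)|Y|^{p-1}(Y-m)^2\bigr]<m$, and you correctly compute the small-$s$ expansion $sg''(s)-g'(s)=(\kappa_4/3)s^3+O(s^5)$ together with the large-$s$ Laplace asymptotic $\phi(s)=g'(s)/s\sim s^{(2-p)/(p-1)}\to0$. But knowing that $\phi$ equals $\mathrm{Var}(Y)$ at $s=0$, starts off decreasing (since $\kappa_4<0$), and tends to $0$ at infinity does not imply $\phi$ is monotone on $(0,\infty)$; it is entirely consistent with these endpoint behaviours for $\phi$ to dip, rise, and dip again, which would violate strict concavity of $h$. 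The phrase ``continuity and monotonicity argument on $\phi$'' names the conclusion you want, not a mechanism for reaching it. The mention of Brascamp--Lieb and of the convexity of $y\mapsto\mathrm{sgn}(y)|y|^{p-1}$ on half-lines is plausible raw material, but as written there is no inequality chain that actually produces $\mathbb{E}_s\bigl[\mathrm{sgn}(Y)|Y|^{p-1}(Y-m)^2\bigr]<m$ for \emph{all} $s>0$; the negative-tail contribution you flag as ``delicate'' is exactly where a genuine estimate is missing. Finally, the claimed Gamma inequality $\Gamma(1/p)\Gamma(5/p)<3\Gamma(3/p)^2$ for $p>2$ is true (negative excess kurtosis of $\gamma_p$) but is not a ``direct computation''; note that the opposite-looking bound $\Gamma(1/p)\Gamma(5/p)\ge\Gamma(3/p)^2$ follows from log-convexity of $\Gamma$, so the factor $3$ is doing real work and deserves a proof. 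In short: $p\le2$ is done; $p>2$ needs a global argument (e.g.\ showing directly that $s\mapsto g''(s)$ is non-increasing, or proving the reduced tilted-measure inequality uniformly in $s$), not a patchwork of local and asymptotic facts.
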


\begin{proof}[Proof of Proposition \ref{p-dependence}]
From the definition of $R^n_a(\theta^n)$ in \eqref{Rn}, it suffices to understand the behavior of \eqref{first-term-Rn}.
Since by \eqref{lambda_x}, $\lambda_{a,2}<1/p$. We may apply Lemma~\ref{con-Lambdap} in the following proof.

First, for $p=2$, from~\eqref{logmgf_lp} and \eqref{lambda-est}, a simple calculation yields
$$\Lambda_2(\lambda_{a,1},\lambda_{a,2})=-\frac{1}{2}\log (1-2\lambda_{a,2})+\frac{1}{2}\frac{\lambda_{a,1}^2}{1-2\lambda_{a,2}}.$$
Hence, by~\eqref{first-term-Rn} and the last display,
$\Psi^n_{p,\theta}(\lambda_a)$ does not depend on $\theta$ and thus is a constant.

Next, consider $p>2$. By~\ref{con-Lambdap}, $\Lambda_p\left(\sqrt{\cdot},\lambda_{a,2}\right)$ is concave but not linear. By the definition of $\Lambda_p$ in~\eqref{logmgf_lp} and the symmetry of the $p$-Gaussian distribution~\eqref{pNormal}, $\Lambda_p(\cdot,\lambda_{a,2})$ is an even function. Therefore, for $\theta^n\in\mathbb{S}^{n-1}$,
\begin{align*}
\frac{1}{n}\sum_{j=1}^n\Lambda_p\left(\sqrt{n}\theta^n_j\lambda_{a,1},\lambda_{a,2} \right) & = 
 \frac{1}{n}\sum_{j=1}^n\Lambda_p\left(\sqrt{n(\theta^n_j)^2(\lambda_{a,1})^2},\lambda_{a,2} \right)\\
 & \leq \Lambda_p\left(\sqrt{\sum_{j=1}^n(\theta^n_j)^2(\lambda_{a,1})^2},\lambda_{a,2}\right)\\
 & = \Lambda_p\left(\sqrt{n}\lambda_{a,1},\lambda_{a,2} \right).
\end{align*}
Moreover, since $\Lambda_p\left(\sqrt{\cdot},\lambda_{a,2}\right)$ is not linear, the equality in the last display holds only when
\[
(\theta^n_1)^2=(\theta^n_2)^2=\cdots=(\theta^n_n)^2=\frac{1}{n}.
\]
Thus, we conclude that the maximum of $\Psi^n_{p,\theta}(\lambda_a)$ is attained at $(\pm 1,\pm1,\ldots,\pm1)/\sqrt{n}$.

On the other hand, to identify the minimizers of $\theta^n\mapsto\Psi^n_{p,\theta}(\lambda_a)$, note from Lemma~\ref{con-Lambdap} and the fact that $\Lambda_p(\cdot,\lambda_{a,2})$ is even, we can write
$
\Psi^n_{p,\theta}(\lambda_a)=\mathcal{F}(\theta^n_1,\ldots,\theta^n_n),
$
where $\mathcal{F}=\mathcal{F}_a$ is defined to be
\begin{align}\label{psi-F}
\mathcal{F}(t_1,t_2,\ldots,t_n) : = \frac{1}{n}\sum_{j=1}^n\Lambda_p\left(\sqrt{nt_j(\lambda_{a,1})^2},\lambda_{a,2}\right).
\end{align}
for $(t_1,\ldots,t_n)$ lies in the compact domain $$\mathcal{A}:=\left\{(t_1,t_2,\ldots,t_n)\in\mathbb{R}^n_+:\sum_{j=1}^nt_j=1\right\}.$$
Since $\mathcal{F}$ is strictly concave by Lemma~\ref{con-Lambdap}, the minimum of $\mathcal{F}$ is obtained at the extreme points of $\mathcal{A}$, namely, the vectors, $\pm e_j, j=1,\ldots,n$.
Thus, by \eqref{psi-F}, the minimum of $\Psi^n_{p,\theta}(\lambda_a)$ is also attained at
\[
\theta^n=\pm e_j,\quad \text{for}\quad j=1,\ldots,n.
\]

The case $p<2$ follows from the same argument on interchanging maxima and minima, and invoking now the convexity of $t_1\mapsto\Lambda_p(\sqrt{t_1},t_2)$ from Lemma~\ref{con-Lambdap}.
\end{proof}

\section{Proof of Proposition~\ref{asymptotic-exp}}\label{app-asymptotic}
\begin{proof}
Fix $m$, $d$, $D$, $h^n$ and $(f,x^*,\alpha,g^n)$ as in the proposition. By setting $\tilde{f}(x) = f(x)-f(x^*)$,
without loss of generality, we assume $f(x^*)=0$.
For $k\in\N$ and any multi-index $\beta=(\beta_1,\ldots,\beta_{k})\in\N^k$, we define $f_\beta:=\partial_{\beta_1,\ldots,\beta_{k}}^{\abs{\beta}}f(x^*)$. 
Since $D\subset \R^m\times \R^d_+$ and $x^*=(0,0,\ldots,0)$, $x^*$ lies in the boundary of $D$. Moreover,
since $f$ is twice continuously differentiable in $D$ and its minimum over $\cl(D)$ is attained at $x^*$, we have $\nabla f(x^*)\cdot (a_1,\ldots,a_m,0,\ldots,0)=0$ for all $a_i\in\R$, $i=1,\ldots,m$, which implies that $f_i= 0$ for $i=1,\ldots,m$. Moreover, since $f$ achieves its minimum uniquely at $x^*$, $f_{ii}>0$ for $i=1,\ldots,m$. 
By Taylor's theorem, we may then write $f$ as 
\[
f(x) = \sum_{i=1}^m\frac{f_{i,i}}{2}x_i^2(1+P_i(x))+\sum_{i=m+1}^df_{i}x_{i}(1+P_{i}(x)),  \quad \text{for $x\in D$}
\]
where $(P_i)_{i=1,\ldots,m+d}$ are continuously differentiable real-valued functions on $D$.
We will proceed by making several changes of variables. We start with the transformation $T_1:\R^m\times\R^d_+\to \R^m\times\R^d_+$ where $u=T_1(x)$ is defined to be
\[
u_{i}=x_{i}(1+P_{i}(x))^{1/2}, \quad i=1,\ldots,m \quad \text{and}\quad u_i = x_i(1+P_i(x)),\quad i=m+1,\ldots,m
+d.\]
Note that the Jacobian $\mathcal{J}_1$ of $T_1$ is $1$ for $x\in D$. Let $u^*:=T_1(x^*)=(0,\ldots,0)$. 
Let $D':=T_1(D)$ and note that $D'\subset \R^m\times\R^d_+$ contains a neighborhood of the origin. For $u \in D'$, define
\begin{equation}\label{eq-F}
F(u) := f(T_1^{-1}(u))=\sum_{i=1}^{m}\frac{f_{i,i}}{2}u_{i}^2+\sum_{i=m+1}^{m+d}f_iu_i.
\end{equation}

Next, let $\mstar$ be the maximum of $F$ in $D'$ or equivalently, of  $f$ in $D$ and define $G^n(u):=g^n(T_1^{-1}(u))$. Using the change of variables  $T_1$, we see that 
\begin{align}
   I^n & := \int_Dh^n(x)dx  = \int_{D'} G^n(u)e^{-nF(u)}du. \label{change-uvw}
\end{align} 
  Since $D'\subset\R^m\times\R^d_+$, we see that $D'$ is covered by the family of surfaces $F(u)=t$ for $t\in[0,\mstar]$, that is, $D'\subset\cup_{t\in[0,\mstar]}\{u\in\R^{k+1}:F(u)=t \}$.
 Note also that $D'$ is bounded and $\nabla F$ is nonzero in $D'$. Using the method of resolution of
multiple integrals~\cite[Theorem 9, Chapter V]{Wong01}, we then have 
\begin{equation}\label{resolution}
I^n = \int_0^\mstar \mathfrak{r}^n(t)e^{-nt}dt, 
\end{equation}
where
\begin{align}\label{r}
\mathfrak{r}^n(t) & := \int_{\{F(u)=t\}}\frac{G^n(u)}{\sqrt{\sum_{i=1}^{k+1}F_{u_i}^2}}dA, \quad t \in [0,\mstar],
\end{align}
with $dA$ denoting the surface element of the surface $F(u)=t$.

To further simplify the integral in \eqref{resolution}-\eqref{r}, we introduce an additional change of variables $T_2: \R^m\times\R^d_+\to \R_+\times[\pi/2,\pi,2]^m\times[0,\pi/2]^{d-1}$ by
letting $(\xi,\theta,\phi)=T_2(u)$ be such that
\begin{align}\label{change-polar}
u_i &= \left(\frac{2\xi}{f_{i,i}}\right)^{1/2}\cos\theta_1\cdots\cos\theta_{i-1}\sin\theta_i,\quad& i = 1,\ldots,m, \nonumber\\
u_{m+i} & = \frac{\xi}{f_{m+i}}\cos^2\theta_1\cdots\cos^2\theta_m\cos^2\phi_{1}\cdots\cos^2\phi_{i-1}\sin^2\phi_{i},\quad& i = 1,\ldots,d-1,\nonumber\\
u_{m+d} & = \frac{\xi}{f_{m+d}}\cos^2\theta_1\cdots\cos^2\theta_m\cos^2\phi_{1}\cdots\cos^2\phi_{d-2}\cos^2\phi_{d-1},
\end{align}
for $\theta_i\in[-\pi/2,\pi/2]$, $i=1,\ldots,m$, $\phi_i\in[0,\pi/2]$, $i=1,\ldots,d-1$ and $\xi\in[0,\mstar]$. Since for $i=m+1,\ldots,m+d$, $u_i\in\R_+$, the domain of $\phi_i$ is restricted to $[0,\pi/2]$, and thus we may take the square in cosines and sines for $i=m+1,\ldots,m+d$. Therefore, $T_2$ is a modified version of polar coordinates and is well defined. 
From \eqref{eq-F} and \eqref{change-polar}, $F(u) = \xi$ for $u\in\R^{m+d}$ and  the Jacobian $\mathcal{J}_2$ of $T_2$ is
\begin{align}
\mathcal{J}_2&:=\frac{\partial(u_1,\cdots,u_{m+d})}{\partial(\xi,\theta_1,\cdots,\theta_{m},\phi_1,\cdots,\phi_{d-1})}\nonumber\\
&=\frac{(2\xi)^{m/2+d-1}}{f_{m+1}\cdots f_{m+d}\sqrt{f_{1,1}\cdots f_{m,m}}}\prod_{i=1}^m\cos^{2d+m-1-i}\theta_i\prod_{i=1}^{d-1}\cos^{2d-1-2i}\phi_i\sin\phi_i.\label{jacobian}
\end{align}
From the second change of variables in~\eqref{change-polar}, we have 
\begin{align}\label{eq-J2}
\frac{dA}{\norm{\nabla F}}=\mathcal{J}_2d\theta d\phi.
\end{align}
Recall $g^n(x) = e^{r^n(x)}$ and $G^n(u) = g^n(T_1^{-1}(u))$. Let $\hat{r}^n(\xi,\theta,\phi)$ denotes the transformation of $r^n$ under $T_2\circ T_1$. Then, \eqref{r}, \eqref{jacobian} and \eqref{eq-J2} imply that
\begin{align}
\mathfrak{r}^n(t) & = \frac{(2t)^{m/2+d-1}}{f_{m+1}\cdots f_{m+d}\sqrt{f_{1,1}\cdots f_{m,m}}}\\
&\qquad \times\int_{\theta\in[-\pi/2,\pi/2]^m}\int_{\phi\in[0,\pi/2]^{d-1}}e^{\hat{r}^n(t,\theta,\phi)}\prod_{i=1}^m\cos^{2d+m-1-i}\theta_i\prod_{i=1}^{d-1}\cos^{2d-1-2i}\phi_i\sin\phi_id\phi d\theta\nonumber\\
&= \frac{(2t)^{m/2+d-1}}{f_{m+1}\cdots f_{m+d}\sqrt{f_{1,1}\cdots f_{m,m}}} e^{\tilde{r}^n(t)},\label{rr}
\end{align}
where 
\begin{align}\label{tilder}
\tilde{r}^n(t) = \log \int_{\theta\in[-\pi/2,\pi/2]^m}\int_{\phi\in[0,\pi/2]^{d-1}}e^{\hat{r}^n(t,\theta,\phi)}\prod_{i=1}^m\cos^{2d+m-1-i}\theta_i\prod_{i=1}^{d-1}\cos^{2d-1-2i}\phi_i\sin\phi_id\phi d\theta.
\end{align}
Since $\abs{r^n(x)}\leq Cn^\alpha\norm{x}$ for $n$ large and $x$ in a neighborhood of the origin, there exist $\varepsilon >0$ and $\tilde{r}^n(t)$ such that  $\abs{\tilde{r}^n(t)}\leq Cn^\alpha t$ for $n$ large and $t\in(0,\varepsilon)$.

From \eqref{resolution} and \eqref{rr}, we observe that 
\begin{align*}
I^n &= \frac{2^{m/2+d-1}}{f_{m+1}\cdots f_{m+d}\sqrt{f_{1,1}\cdots f_{m,m}}}\int_0^\mstar t^{m/2+d-1}e^{-nt+\tilde{r}^n(t)}dt.
\end{align*}
Now, applying \cite[Chapter 9, Theorem 2.1]{Olv97} with $p$, $r$, $q$, $\lambda$, $\mu$, and $\nu$ being $t$, $\tilde{r}^n$, $t^{m/2+d-1}$, $m/2+d$, $1$ and $1$, we obtain
\begin{align*}
I^n 
&= \frac{2^{m/2+d-1}}{f_{m+1}\cdots f_{m+d}\sqrt{f_{1,1}\cdots f_{m,m}}}\prod_{i=1}^m\int_{-\pi/2}^{\pi/2}\cos^{2d+m-1-i}\theta d\theta \prod_{i=1}^{d-1}\int_0^{2\pi}\cos^{2d-1-2i}\phi \sin\phi d\phi \frac{\Gamma\left(m/2+d\right)}{n^{m/d+2}}\\
&=\frac{(2\pi)^{m/2}g^n(x^*)}{n^{d+m/2}f_{m+1}\cdots f_{m+d}\sqrt{f_{1,1}\cdots f_{m,m}}}(1+o(1)).
\end{align*}
\end{proof}

\section{A uniform deviation estimate} \label{app-GC-class}
We now establish Lemma \ref{lem-conti-K}. Key ingredients of  the proof  include the Gaussian concentration inequality and certain deviation estimates that are uniform with respect to a class of functions, much in the  spirit of uniform Glivenko-Cantelli or Donsker classes.
\begin{proof}[Proof of Lemma \ref{lem-conti-K}]

Fix $\varepsilon>0$.
Also, consider $x\in\R$ and $(t_1,t_2)\in\mathbb{D}$. We will repeatedly use the fact that $(xt_1,t_2)$ lies in $\mathbb{D}$.
By the assumed differentiability properties of $\mathcal{K}$, an application of Taylor's theorem shows that
\begin{align}\label{eq-taylor}
\mathcal{K}(xt_1,t_2) = \mathcal{K}(0,t_2)+\partial_1\mathcal{K}(\rho(xt_1,t_2),t_2)xt_1,
\end{align}
where $\rho:\mathbb{D}\to\R$ is a function that satisfies
\begin{align}\label{eq-rho_ineq}
\abs{\rho(y,t)}\leq\abs{y} \quad \text{for} \quad (y,t)\in \mathbb{D}.
\end{align}
 $\abs{\rho(xt_1,t_2)}\leq\abs{xt_1}$. By the polynomial growth assumption on the partial derivatives of $\mathcal{K}$, there exist $q$, $\tilde{C}\in(0,\infty)$ such that
\begin{align}
\sup_{t\in\mathbb{D}, \norm{t}<\varepsilon}\abs{\partial_1\mathcal{K}(xt_1,t_2)}&\leq \tilde{C}(1+\varepsilon^q\abs{x}^q),\label{eq-polybound}\\
\sup_{t\in\mathbb{D}, \norm{t}<\varepsilon}\abs{\partial_{1j}\mathcal{K}(xt_1,t_2)}&\leq  \tilde{C}(1+\varepsilon^q\abs{x}^q),\quad \text{for} \quad j=1,2.\label{eq-polybound1}
\end{align}
By \eqref{eq-taylor}, for $t\in\mathbb{D}$, $\norm{t}<\varepsilon$, we see that
\begin{align}
\sum_{j=1}^n\left(\mathcal{K}\left(\frac{\sqrt{n}Z_j}{\norm{Z^{(n)}}}t_1,t_2\right)-\mathbb{E}\left[\mathcal{K}(Zt_1,t_2)\right]  \right)=t_1\left(I^n_1(t)+I^n_2(t)\right),\label{eq-111}
\end{align}
where 
\begin{align}
I^n_1(t)&:=\sum_{j=1}^n\left(\partial_1\mathcal{K}\left(\rho\left(\frac{\sqrt{n}Z_j}{\norm{Z^{(n)}}}t_1,t_2\right),t_2\right)\frac{\sqrt{n}Z_j}{\norm{Z^{(n)}}}-\partial_1\mathcal{K}\left(\rho\left(Z_jt_1,t_2\right),t_2\right)Z_j\right),\label{eq-In1.5}\\
I^n_2(t)&:=\sum_{j=1}^n\left(\partial_1\mathcal{K}\left(\rho\left(Z_jt_1,t_2\right),t_2\right)Z_j-\mathbb{E}\left[\partial_1\mathcal{K}(\rho(Zt_1,t_2),t_2)Z\right]\right).\label{eq-In2}
\end{align}

The proof follows in several steps.

\noindent{\bf Step 1.} We claim that for $\alpha\in(1/2,1)$,
 there exist $C_1\in(0,\infty)$ and a random integer $N_1$ such that $\mathbb{P}'$-almost surely,
\begin{align}\label{eq-wts1}
\sup_{t\in\mathbb{D}, \norm{t}<\varepsilon}\abs{n^{-\alpha}I^n_1(t)}\leq C_1 \quad \text{for} \quad n\geq N_1.
\end{align}

\noindent{\bf Proof of claim of Step 1.} Note from \eqref{eq-taylor} that for $x\in\R$, and $t\in\mathbb{D}$, $\|t\|<\varepsilon$, 
\[
\frac{d}{dx}\left(\partial_1\mathcal{K}(\rho(xt_1,t_2),t_2)x\right) =
\begin{dcases}
\frac{d}{dx}\left(\frac{\mathcal{K}(xt_1,t_2) - \mathcal{K}(0,t_2)}{t_1}\right)=\partial_1\mathcal{K}(xt_1,t_2),\quad &\text{if $t_1\neq 0$},\\
\partial_1\mathcal{K}(0,t_2),\quad &\text{if $t_1=0$.}
\end{dcases}
\]
Together with \eqref{eq-polybound}, this implies that for $x\in\R$,
\begin{align*}
\sup_{t\in\mathbb{D},\norm{t}<\varepsilon}\abs{\frac{d}{dx}\left(\partial_1\mathcal{K}(\rho(xt_1,t_2),t_2)x\right)}=\sup_{t\in\mathbb{D},\norm{t}<\varepsilon}\abs{\partial_1\mathcal{K}(xt_1,t_2)}&\leq \tilde{C}(1+\varepsilon^q\abs{x}^q).
\end{align*}
Combining the last two displays, we see that
\begin{align}
\sup_{t\in\mathbb{D}, \norm{t}<\varepsilon}\abs{n^{-\alpha}I^n_1(t)}
&\leq n^{-\alpha}\sup_{t\in\mathbb{D}, \norm{t}<\varepsilon} \sum_{j=1}^n \sup_{x\in \left[\frac{\sqrt{n}Z_j}{\norm{Z}^{(n)}},Z_j\right]}\partial_1\abs{\mathcal{K}(xt_1,t_2)}\abs{\frac{\sqrt{n}Z_j}{\norm{Z^{(n)}}}-Z_j}\nonumber\\
& \leq \abs{\frac{(\sqrt{n}-\norm{Z^{(n)}})n^{1/2-\alpha}}{\norm{Z^{(n)}}/\sqrt{n}}} \frac{\tilde{C}}{n}\sum_{j=1}^n\left(1+\varepsilon^q\max\left(\frac{\sqrt{n}}{\norm{Z^{(n)}}},1\right)^q\abs{Z_j}^q\right)\abs{Z_j}.\label{eq-up0}
\end{align}

Since $(Z_j)_{j\in\N}$ are independent, by the strong law of large numbers, $\mathbb{P}'$-almost surely, as $n\to\infty$,
\begin{align}\label{eq-up2}
\frac{1}{n}\sum_{j=1}^n(1+(\tilde{C}\varepsilon)^q\abs{Z_j}^q)\abs{Z_j}\to\mathbb{E}\left[(1+(\tilde{C}\varepsilon)^q\abs{Z}^q)\abs{Z}\right] \quad\text{and} \quad  \frac{\norm{Z^{(n)}}}{\sqrt{n}}\to 1.
\end{align}
Furthermore, the Gaussian concentration inequality~\cite[Theorem 3.1.1]{Vershynin18}, implies that
there exists $c\in(0,\infty)$ such that
\begin{align*}
\mathbb{P}\left( n^{1/2-\alpha} \abs{\norm{Z^{(n)}}-\sqrt{n}}>\tilde{C} \right) 
= \mathbb{P}\left( \abs{\norm{Z^{(n)}}-\sqrt{n}}> n^{\alpha-1/2}\tilde{C} \right)
 \leq 2 e^{-c(\tilde{C})^{2}n^{2\alpha-1}},
\end{align*}
which is summable because $\alpha>1/2$. Hence, by the Borel-Cantelli lemma and the second limit in \eqref{eq-up2}, there exists a random integer $N'_1\in\N$ such that $\mathbb{P}'$-almost surely,
\begin{align}\label{eq-up1}
n^{1/2-\alpha} \abs{\norm{Z^{(n)}}-\sqrt{n}}\leq \tilde{C} \quad \text{and} \quad \max\left(\frac{\sqrt{n}}{\norm{Z^{(n)}}},1 \right)\leq \tilde{C},\quad n\geq N'_1.
\end{align} 
The claim of Step 1 then follows from \eqref{eq-up0}, \eqref{eq-up2}, and \eqref{eq-up1}.\\

\noindent{\bf Step 2.} We now establish a bound on $I_2^n(t)$ defined in \eqref{eq-In2}.
Specifically we show that there exist $C_2\in(0,\infty)$ and a random integer $N_2\in\N$ such that $\mathbb{P}'$-almost surely, for $n\geq N_2$,
\begin{align}\label{eq-wts2}
\sup_{\norm{t}<\varepsilon}\abs{n^{-\alpha}I^n_2(t)}\leq C_2, \quad \text{for} \quad n\geq N_2.
\end{align}

Before proving this bound, we first show how when combined with Step 1, this proves the lemma.
Indeed, \eqref{eq-111}, \eqref{eq-wts1} and \eqref{eq-wts2} together show that there  exist $C_1,C_2\in(0,\infty)$ such that almost surely, for $t\in\mathbb{D}$, $\norm{t}\leq \varepsilon$ and $n\geq \max\{N_1,N_2\}$,
\begin{align*}
\abs{\sum_{j=1}^n\mathcal{K}\left(\frac{\sqrt{n}Z_j}{\norm{Z^{(n)}}}t_1,t_2\right)-\mathbb{E}\left[\mathcal{K}(Zt_1,t_2)\right] }\leq n^\alpha\abs{t_1}(C_1+C_2)\leq (C_1+C_2)n^\alpha\norm{t}. 
\end{align*}
This implies that \eqref{eq-wts} holds $\mathbb{P}'$-almost surely and concludes the proof of the lemma.

To complete the proof of the lemma, it only remains to prove the bound in Step 2.

\noindent{\bf Proof of bound in Step 2.}
To prove \eqref{eq-wts2}, we introduce a suitable truncation of $(x,(t_1,t_2))\mapsto\partial_1\mathcal{K}(\rho(xt_1,t_2),t_2)x$.
To this end, recall the definition of $q$ in \eqref{eq-polybound}. Since $\alpha>1/2$, we may choose $\beta>0$ so that
\begin{align}\label{eq-r}
2\alpha-1-2\beta q>0 \quad \text{and define} \quad r_n:= n^\beta.
\end{align}
Then for $x\in\R$ and $t\in\mathbb{D}$, $\norm{t}<\varepsilon$, define
\begin{align}\label{eq-M}
\mathbb{T}_n(x,t):= 
\begin{cases}
\partial_1\mathcal{K}\left(\rho\left(xt_1,t_2\right),t_2\right)x, & \text{if }  \abs{x}<r_n,\\
\partial_1\mathcal{K}\left(\rho\left(\text{sgn}(x)r_n t_1,t_2\right),t_2\right)\text{sgn}(x)r_n, & \text{if }  \abs{x}\geq r_n,
\end{cases}
\end{align}
where $\text{sgn}:\R\to\{-1,1\}$ is defined by $\text{sgn}(x)=1$ if $x\geq 0$ and $\text{sgn}(x)=-1$ if $x<0$.
We bound \eqref{eq-wts2} above by the sum of three terms:
\begin{align}\label{eq-123}
\sup_{\norm{t}<\varepsilon}\abs{n^{-\alpha}I^n_2(t)}\leq  I^n_{21}+I^n_{22}+I^n_{23},
\end{align}
where
\begin{align}
I^n_{21}&:=\sup_{t\in\mathbb{D}, \norm{t}<\varepsilon}\abs{\frac{1}{n^\alpha}\sum_{j=1}^n\left(\mathbb{T}_n(Z_j,t)-\mathbb{E}[\mathbb{T}_n(Z,t)] \right)]},\label{eq-I1}\\
I^n_{22}&:=\sup_{t\in\mathbb{D},\norm{t}<\varepsilon}\abs{\frac{1}{n^\alpha}\sum_{j=1}^n\left(\partial_1\mathcal{K}(\rho(Z_jt_1,t_2),t_2)Z_j- \mathbb{T}_n(Z_j,t)\right)},\label{eq-I2}\\
I^n_{23}&:=\sup_{t\in\mathbb{D},\norm{t}<\varepsilon}\abs{\mathbb{E}\left[\frac{1}{n^\alpha}\sum_{j=1}^n\left(\partial_1\mathcal{K}(\rho(Z_jt_1,t_2),t_2)Z_j- \mathbb{T}_n(Z_j,t)\right)\right]}\label{eq-I3}.
\end{align} 
We now treat each of these terms individually.

\noindent{\bf Step 2A.}
For the first term $I^n_{21}$ in \eqref{eq-I1}, we start by proving that there exist $C_3<\infty$ and a random integer $N_3$ such that $\mathbb{P}'$-almost surely,
\begin{align}\label{eq-b1}
I^n_{21}\leq C_3+2\varepsilon \quad \text{for} \quad n\geq N_3.
\end{align}

\noindent{\bf Proof of Step 2A bound.} The proof of \eqref{eq-b1} starts with the following claim.

\noindent{\bf Claim.}
{\it For $x\in\R$ and $t\in\mathbb{D}$, $\norm{t}<\varepsilon$, $t\mapsto\mathbb{T}_n(x,t)$ is Lipschitz continuous with constant $\tilde{C}(1+\varepsilon^qr_n^q)r_n^2$.
}\\
{\bf Proof of the claim.}
Define 
\begin{align*}
x_n:=
\begin{cases}
x, & \text{if} \quad x<r_n,\\
\text{sgn}(x)r_n, &\text{if} \quad x\geq r_n.
\end{cases}
\end{align*}
First, note that since $\mathcal{K}$ is twice continuously differentiable, by \eqref{eq-taylor} and \eqref{eq-M}, for $t\in\mathbb{D}$, $\norm{t}<\varepsilon$ and $t_1\neq 0$,
\begin{align*}
\partial_{t_1}\mathbb{T}_n(x,t) =
\partial_{t_1}\left(\frac{\mathcal{K}(xt_1,t_2) - \mathcal{K}(0,t_2)}{t_1}\right)&=\frac{xt_1\partial_{1}\mathcal{K}(xt_1,t_2)-\mathcal{K}(xt_1,t_2)+\mathcal{K}(0,t_2)}{t_1^2}\\
&= -\frac{1}{2}\xi^2\partial_{11}\mathcal{K}(\xi,t_2),
\end{align*}
where the second equality follows from Taylor's theorem with $\xi$ being a constant such that $\abs{\xi}\leq \abs{x_nt_1}\leq \varepsilon r_n$. Likewise, by \eqref{eq-rho_ineq}, $\rho(0,t_2)=0$, and for  $\in\mathbb{D}$, $\norm{t}<\varepsilon$ and $t_1= 0$,
\begin{align*}
\left.\partial_{t_1}\mathbb{T}_n(x,t)\right|_{t_1=0} &=
\lim_{t_1\to0}\left(\frac{\partial_1\mathcal{K}\left(\rho\left(xt_1,t_2\right),t_2\right)x -\partial_1\mathcal{K}\left(0,t_2\right)x}{t_1}\right)\\&=\lim_{t_1\to0}\frac{\mathcal{K}(xt_1,t_2)-\mathcal{K}(0,t_2)-\partial_1\mathcal{K}(0,t_2)xt_1}{t_1^2}\\
&= \frac{1}{2}x^2\partial_{11}\mathcal{K}(0,t_2).
\end{align*}
By \eqref{eq-polybound1} and the last two displays, for $t\in\mathbb{D}$, $\norm{t}<\varepsilon$, we have
\begin{align}\label{eq-lip1}
\abs{\partial_{t_1}\mathbb{T}_n(x,t)} \leq \tilde{C}(1+\varepsilon^qr_n^q)r_n^2.
\end{align}
Similarly, for $x\in\R$ and $t\in\mathbb{D}$, $\norm{t}<\varepsilon$ by \eqref{eq-taylor}, \eqref{eq-M} and Taylor's theorem, when $t_1\neq 0$,
\begin{align*}
\partial_{t_2}\mathbb{T}_n(x,t) =
\partial_{t_2}\left(\frac{\mathcal{K}(xt_1,t_2) - \mathcal{K}(0,t_2)}{t_1}\right)&=\frac{\partial_{2}\mathcal{K}(xt_1,t_2)-\partial_2\mathcal{K}(0,t_2)}{t_1}\\
&= \xi\partial_{12}\mathcal{K}(\xi,t_2),
\end{align*}
where $\xi$ is a constant such that $\abs{\xi}\leq \abs{x_nt_1}$ and when $t_1= 0$,
\begin{align*}
\partial_{t_2}\mathbb{T}_n(x,t) =
\partial_{t_2}\left(\partial_1\mathcal{K}\left(0,t_2\right)x \right)&=x\partial_{12}\mathcal{K}(0,t_2).
\end{align*}
The last two displays and \eqref{eq-polybound1} imply that
\begin{align}\label{eq-lip2}
\abs{\partial_{t_2}\mathbb{T}_n(x,t)} \leq \tilde{C}(1+\varepsilon^qr_n^q)r_n, \quad \text{for $x\in\R$ and $\norm{t}\leq \varepsilon$}.
\end{align}
Thus, the claim follows from \eqref{eq-lip1} and \eqref{eq-lip2}. \hfill\qedsymbol{}

We now continue with the proof of Step 2A.
For $n\in\N$, let $\delta_n$ and $k_n$ be finite positive constants given by 
\begin{align}\label{eq-delta}
\delta_n:=\frac{\varepsilon}{\tilde{C}(1+\varepsilon^qr_n^q)r_n^2n^{1-\alpha}}\quad  \text{and}\quad  k_n:=\left\lceil \left(\tilde{C}(1+\varepsilon^qr_n^q)r_n^2n^{1-\alpha}\right)^2 \right\rceil.
\end{align}
Given the claim, there exist $(l_k)_{k=1,\ldots,k_n}\subset \{t\in\mathbb{D}:\norm{t}\leq \varepsilon\}$ such that $\cup_{k=1}^{k_n}B_{\delta_n}(l_k)\supset \{t\in\mathbb{D}:\norm{t}\leq \varepsilon\}$ and for $x\in\R$, 
\begin{align}\label{eq-lip}
\abs{\mathbb{T}_n(x,u)-\mathbb{T}_n(x,v)}\leq \norm{u-v}\tilde{C}(1+\varepsilon^qr_n^q)r_n^2\leq \frac{2\varepsilon}{n^{1-\alpha}} \quad \text{for} \quad u,v\in B_{\delta_n}(l_k),
\end{align}
where the last inequality uses $\norm{u-v}\leq 2\delta_n$ and \eqref{eq-delta}.
Together with the expression for $I^n_{21}$ in \eqref{eq-I1}, this shows that
\begin{align}
I^n_{21} &\leq \sup_{k=1,\ldots,k_n}\sup_{t\in B_{\delta_n}(l_k)}\abs{\frac{1}{n^\alpha}\sum_{j=1}^n\left(\mathbb{T}_n(Z_j,t)-\mathbb{E}[\mathbb{T}_n(Z,t)] \right)}\nonumber\\
&\leq \sup_{k=1,\ldots,k_n}\abs{\frac{1}{n^\alpha}\sum_{j=1}^n\left(\sup_{t\in B_{\delta_n}(l_k)}\mathbb{T}_n(Z_j,t)-\mathbb{E}\left[\sup_{t\in B_{\delta_n}(l_k)}\mathbb{T}_n(Z_j,t)\right]\right)}\nonumber\\
&\qquad +\sup_{k=1,\ldots,k_n}n^{1-\alpha}\abs{\mathbb{E}\left[\sup_{t\in B_{\delta_n}(l_k)}\mathbb{T}_n(Z,t)-\inf_{t\in B_{\delta_n}(l_k)}\mathbb{T}_n(Z,t)\right]}\nonumber\\
&\qquad + \sup_{k=1,\ldots,k_n} n^{1-\alpha} \abs{\sup_{t\in B_{\delta_n}(l_k)}\mathbb{T}_n(Z,t)-\inf_{t\in B_{\delta_n}(l_k)}\mathbb{T}_n(Z,t)}\nonumber\\
&\leq \sup_{k=1,\ldots,k_n}\abs{\frac{1}{n^\alpha}\sum_{j=1}^n\left(\sup_{t\in B_{\delta_n}(l_k)}\mathbb{T}_n(Z_j,t)-\mathbb{E}\left[\sup_{t\in B_{\delta_n}(l_k)}\mathbb{T}_n(Z_j,t)\right]\right)}+4\varepsilon,\label{eq-I11}
\end{align}
where the last inequality follows from \eqref{eq-lip}.
In addition, note that, together, \eqref{eq-polybound} and \eqref{eq-M} imply
\[
\abs{\sup_{t\in B_{\delta_n}(l_k)}\mathbb{T}_n(Z_j,t)} \leq \tilde{C}(1+\varepsilon^qr_n^q).
\]
Hence, by the union bound and Hoeffding's inequality~\cite[Theorem 2.2.6]{Vershynin18}, which is applicable since $(Z_j)_{j\in\N}$ are i.i.d., for any $n\in\N$ and $C_3\in(0,\infty)$ we have
\begin{align*}
&\mathbb{P}\left( \sup_{k=1,\ldots,k_n}\abs{\frac{1}{n^\alpha}\sum_{j=1}^n\left(\sup_{t\in B_{\delta_n}(l_k)}\mathbb{T}_n(Z_j,t)-\mathbb{E}\left[\sup_{t\in B_{\delta_n}(l_k)}\mathbb{T}_n(Z_j,t)\right]\right)}>C_3\right)\\
&\qquad \leq \sum_{k=1}^{k_n}\mathbb{P}\left( \abs{\frac{1}{n^\alpha}\sum_{j=1}^n\left(\sup_{t\in B_{\delta_n}(l_k)}\mathbb{T}_n(Z_j,t)-\mathbb{E}\left[\sup_{t\in B_{\delta_n}(l_k)}\mathbb{T}_n(Z_j,t)\right]\right)}>C_3\right)\\
&\qquad \leq 2k_n\exp\left(-\frac{2C_3^2n^{2\alpha}}{n\tilde{C}^2(1+\varepsilon^qr_n^q)^2} \right),
\end{align*}
which is summable in $n$ by \eqref{eq-delta} and \eqref{eq-r}. Step 2A then follows from the Borel-Cantelli lemma and \eqref{eq-I11}.

\noindent{\bf Step 2B.}
Next, we deal with the quantity $I^n_{22}$ in \eqref{eq-I2} and show that 
there exists $C_4<\infty$ and a random integer $N_5$ such that $\mathbb{P}'$-almost surely, for $n\geq N_5$,
\begin{align}\label{eq-b2}
I^n_{22}\leq C_4, \quad \text{and} \quad I^n_{23}\leq C_4.
\end{align}

\noindent{\bf Proof of Step 2B bounds.}
Note that by \eqref{eq-I2}, \eqref{eq-M}, \eqref{eq-polybound} and \eqref{eq-rho_ineq},
\begin{align*}
I^n_{22}\leq \frac{1}{n^\alpha}\sum_{i=1}^n \left(\abs{Z_j}\tilde{C}(1+\varepsilon^q\abs{Z_j}^q)+r_n\tilde{C}(1+\varepsilon^qr_n^q)\right)1_{\{\abs{Z_j}>r_n\}}.
\end{align*}
Hence, Markov's inequality and the fact that $(Z_j)_{j\in\N}$ are i.i.d. imply that for any $C_4\in(0,\infty)$,
\begin{align*}
\mathbb{P}(I^n_{22}>C_4)&\leq \mathbb{P}\left(  \frac{1}{n^\alpha}\sum_{i=1}^n \left(\abs{Z_j}\tilde{C}(1+\varepsilon^q\abs{Z_j}^q)+r_n\tilde{C}(1+\varepsilon^qr_n^q)\right)1_{\{\abs{Z_j}>r_n\}} >C_4\right)\\
&\leq \frac{1}{C_4}n^{1-\alpha}\mathbb{E}\left[\ \left(\abs{Z_j}\tilde{C}(1+\varepsilon^q\abs{Z_j}^q)+r_n\tilde{C}(1+\varepsilon^qr_n^q)\right)1_{\{\abs{Z_j}>r_n\}} \right].
\end{align*}
Now, for any $k\in\N\cup\{0\}$, the Laplace approximation (see e.g.\ \cite[Chapter 2]{Wong01}) implies 
\begin{align*}
\mathbb{E}\left[\abs{Z}^k1_{\{\abs{Z}>r_n\}}\right] = 2\int_{\{x>r_n\}}x^k\frac{1}{\sqrt{2\pi}}e^{-\frac{x^2}{2}}dx = 2r_n^{k-\frac{1}{2}}e^{-\frac{r_n^2}{2}}(1+o(1)).
\end{align*}
Hence, there exist $C_4'\in(0,\infty)$ and $N_4\in\N$ such that for $n\geq N_4$ and $k\in\N\cup\{0\}$,
\[
\mathbb{E}\left[\abs{Z}^k1_{\{\abs{Z}>r_n\}}\right]\leq C_4'r_n^{k}e^{-\frac{r_n^2}{2}}.
\]
The last three displays together yield the following bound on the tail probability of $I^n_{22}$:
\begin{align*}
\mathbb{P}(I^n_{22}>C_4)&\leq\frac{C_4'}{C_4}n^{1-\alpha} \left(r_n\tilde{C}(1+\varepsilon^qr_n^q)\right)e^{-r_n^2/2}.
\end{align*}
Since this is summable in $n$ due to the definition of $r_n$ in \eqref{eq-r},
the first inequality in \eqref{eq-b2} follows from the Borel-Cantelli lemma.

Since \eqref{eq-I2} and \eqref{eq-I3} imply $I^n_{23}\leq \mathbb{E}[I^n_{22}]$, the second inequality in \eqref{eq-b2} follows from the first. This concludes Step 2B. Moreover, when combined with \eqref{eq-123}, Step 2A and Step B prove the claim of Step 2.

\end{proof}

\bibliographystyle{imsart-nameyear}
\bibliography{LDP_prefactor,refs11}

\end{document}